\documentclass[preprint,12pt]{elsarticle}

\usepackage{lineno,hyperref}
\modulolinenumbers[5]

\usepackage{ifpdf}
\usepackage{bm}
\usepackage{mathdots}
\usepackage{soul, color}
\usepackage{amsmath}
\usepackage{mathrsfs}
\usepackage{subfig}
\usepackage{graphicx}
\usepackage{epstopdf}
\usepackage{epsfig}
\usepackage{yfonts}
\usepackage{xcolor}
\usepackage[abs]{overpic}
\usepackage{enumitem}

\newtheorem{theorem}{Theorem}[section]
\newtheorem{lemma}[theorem]{Lemma}
\newtheorem{proposition}[theorem]{Proposition}
\newtheorem{corollary}[theorem]{Corollary}

\newtheorem{definition}{Definition}[section]

\newenvironment{proof}[1][Proof]{\begin{trivlist}
\item[\hskip \labelsep {\bfseries #1}]}{\end{trivlist}}

\usepackage[top=1in, bottom=1in, left=1in, right=1in]{geometry}

\newcommand{\Wnp}{\mathcal{W}^{n,p}}

\newcommand{\Wni}{\mathcal{W}^{n,\infty}}

\newcommand{\R}{\ifmmode\mathbb{R}\else$\mathbb{R}$\fi}
\newcommand{\N}{\ifmmode\mathbb{N}\else$\mathbb{N}$\fi}
\newcommand{\Q}{\ifmmode\mathbb{Q}\else$\mathbb{Q}$\fi}
\newcommand{\Z}{\ifmmode\mathbb{Z}\else$\mathbb{Z}$\fi}

\newcommand{\tn}[1]{\textnormal{#1}}
\newcommand{\myto}{\mathop{\raisebox{-0pt}{\scalebox{2.6}[1]{$\longrightarrow$}}}}

\newcommand{\NNinput}{\tn{\#input}}
\newcommand{\NNwidth}{\, \tn{widthvec}}

\newcommand{\xal}{{\bm{\alpha}}}
\newcommand{\xx}{{\bm{x}}}

\renewcommand{\epsilon}{\varepsilon}
\renewcommand{\subset}{\subseteq}
\renewcommand{\frac}[2]{\tfrac{#1}{#2}}





\usepackage{amssymb}


\journal{Journal}

\begin{document}

\begin{frontmatter}



\title{Simultaneous Neural Network {Approximation for Smooth Functions}}

\date{}

\author{Sean Hon\corref{cor1}}
\address{Department of Mathematics, Hong Kong Baptist University, Kowloon Tong, Hong Kong SAR}
\ead{seanyshon@hkbu.edu.hk}

\author{Haizhao Yang\corref{cor2}}
\address{Department of Mathematics, Purdue University, IN 47907, USA}
\ead{haizhao@purdue.edu}

\begin{abstract}
We establish in this work approximation results of deep neural networks for smooth functions measured in Sobolev norms, motivated by recent development of numerical solvers for partial differential equations using deep neural networks. {Our approximation results are nonasymptotic in the sense that the error bounds are explicitly characterized in terms of both the width and depth of the networks simultaneously with all involved constants explicitly determined.} Namely, for $f\in C^s([0,1]^d)$, we show that deep ReLU networks of width $\mathcal{O}(N\log{N})$ and of depth $\mathcal{O}(L\log{L})$ can achieve a nonasymptotic approximation rate of $\mathcal{O}(N^{-2(s-1)/d}L^{-2(s-1)/d})$ with respect to the $\mathcal{W}^{1,p}([0,1]^d)$ norm for $p\in[1,\infty)$. If either the ReLU function or its square is applied as activation functions to construct deep neural networks of width $\mathcal{O}(N\log{N})$ and of depth $\mathcal{O}(L\log{L})$ to approximate $f\in C^s([0,1]^d)$, the approximation rate is $\mathcal{O}(N^{-2(s-n)/d}L^{-2(s-n)/d})$ with respect to the $\mathcal{W}^{n,p}([0,1]^d)$ norm for $p\in[1,\infty)$. 
\end{abstract}

\begin{keyword}
Deep neural networks \sep Sobolev norm  \sep ReLU$^k$ activation functions \sep  approximation theory 

\end{keyword}

\end{frontmatter}



\section{Introduction}

Over the past decades, deep neural networks have made remarkable impacts in various areas of science and engineering. With the aid of high-performance computing equipment and abundance of high quality data, neural network based methods outperform traditional machine learning methods in a wide range of applications, including image classification, object detection, speech recognition, to name just a few. The success of neural networks also motivates its applications in scientific computing including the recovery of governing equations for mathematical modeling and prediction \cite{PhysRevLett.120.143001,E_2017,QIN2019620,LONG2019108925,HARLIM2021109922,CiCP-25-947} and solving partial differential equations {(PDEs)} \cite{RAISSI2019686,E_han_Jentzen_2017,Han8505,gitta_petersen_raslan_2021,Deenis_pilpp_Jentzen_2021,geist_petersen_raslan_2021,khoo_lu_ying_2021,GU2021110444,2020arXiv201208023C,2021arXiv210606682L}.

Neural network based methods have evoked many open problems in mathematical theory, notwithstanding their success in practice. In a typical supervised learning algorithm, a potential high-dimensional target function $f(x)$ defined on a domain $\Omega$ is to be learned from a finite set of data samples $\{(\mathbf{x}_i,f(\mathbf{x}_i))\}_{i=1}^n$. When a deep network is integrated in the learning process, one needs to identify a deep network $\phi(\mathbf{x};\mathbf{\theta_{S}})$ with $\mathbf{\theta_{S}}$ being the hyperparameter to determine $f(\mathbf{x})$ for unseen data samples $\mathbf{x}$, namely the following optimization problem arises

\begin{equation}\label{eqn:theta_S}
\mathbf{\theta_{S}} = \text{arg} \min_{\mathbf{\theta}}R_S(\mathbf{\theta}):= \text{arg} \min_{\mathbf{\theta}}  \frac{1}{n} \sum_{i=1}^{n}l ( \phi (x_i ; \theta), f(x_i) )
\end{equation}
where $l$ denotes a loss function.

Now, let us inspect the overall inference error which is estimated by $R_D (\theta_{S})$, where
$
R_D (\theta) := E_{\mathbf{x}\sim U(\Omega)}[l ( \phi (\mathbf{x} ; \mathbf{\theta}), f(\mathbf{x}) )]
$. In reality, $U(\Omega)$ is unknown and only finitely many samples from this distribution are available. Hence, the empirical loss $R_S(\mathbf{\theta})$ is minimized hoping to obtain $\phi(\mathbf{x};\mathbf{\theta_{S}})$, instead of minimizing the population loss $R_D(\mathbf{\theta})$ to obtain $\phi(\mathbf{x};\mathbf{\theta_{D}})$, where $\mathbf{\theta_{D}}=\text{arg}\min_{\mathbf{\theta}} R_D (\theta)$. In practice, a numerical optimization method to solve (\ref{eqn:theta_S}) may result in a numerical solution (denoted as $\mathbf{\theta_{{\mathcal{N}}}}$) that may not be a global minimizer $\mathbf{\theta_{{S}}}$. Therefore, the actually learned neural network to infer $f(x)$ is $\phi(\mathbf{x};\mathbf{\theta_{{\mathcal{N}}}})$ and the corresponding inference error is measured by $R_D(\mathbf{\theta_{\mathcal{N}}})$. By the discussion just above, it is crucial to quantify $R_D(\mathbf{\theta_{\mathcal{N}}})$ to see how good the learned neural network $\phi(\mathbf{x};\mathbf{\theta_{{\mathcal{N}}}})$ is, since $R_D(\mathbf{\theta_{\mathcal{N}}})$ is the expected inference error overall possible data samples. Note that
\begin{eqnarray}
R_D(\mathbf{\theta_{\mathcal{N}}}) 
\leq  \underbrace{R_D(\mathbf{\theta_D})}_{\rm{approximation}} + \underbrace{[R_S(\mathbf{\theta_{\mathcal{N}}}) - R_S(\mathbf{\theta_S})]}_{\rm{optimization}}+\underbrace{[R_D(\mathbf{\theta_{\mathcal{N}}}) - R_S(\mathbf{\theta_{\mathcal{N}}})]+ [R_S(\mathbf{\theta_D}) - R_D(\mathbf{\theta_D})]}_{\rm{generalization}},\label{eqn:learning_error_main}
\end{eqnarray}
where the inequality comes from the fact that $[R_S(\mathbf{\theta_S}))- R_S(\mathbf{\theta_D})] \leq 0$ since $\mathbf{\theta_S}$ is a global minimizer of $R_S(\mathbf{\theta})$. 

Synergies from approximation theory \cite{Yarotsky2017,Schwab2019,Bolcskei2019,Shen2019,Montanelli2020,Montanelli2019,lu2020deep,Remi_Gitta_Morten_V_2021,Petersen2018, E2018,Weinan2019,E_MA_WU_2021,SIEGEL2020313,2018arXiv180703973H,devore_hanin_petrova_2021,Nonlinear_app_2021}, optimization theory \cite{Jacot2018,MeiE7665,pmlr-v99-mei19a,pmlr-v97-arora19a,2018arXiv181002054D,pmlr-v70-zhong17a,pmlr-v97-allen-zhu19a,pmlr-v97-du19c,8409482}, and generalization theory \cite{Jacot2018,Cao2019,Weinan2019,Berner2020,luo2020twolayer,2021arXiv210101708L,2021arXiv210501228L} have led to many recent advances in the mathematical investigation for deep learning, regarding the error bound (\ref{eqn:learning_error_main}). All of these areas, having different emphases and {angles}, have fostered many separate research directions. 

Providing an estimate of the first error term of (\ref{eqn:learning_error_main}), $R_D(\mathbf{\theta_D})$, belongs to the regime of approximation theory and is of the main concerns in this paper.   
{The results established in this work provide an upper bound of $R_D(\mathbf{\theta_D})$, estimated explicitly in terms of the size of the network, e.g. its width and depth, with a nonasymptotic approximation rate for smooth functions measured in the Sobolev norms. {Our approximation results are nonasymptotic in the sense that the all involved constants are explicitly determined.} We summarize our first main result in the following {on the networks with rectified linear unit (ReLU) $\sigma_1$ as the activation function}:}

{
\begin{theorem}\label{coro:function_sigma_1}
Suppose that $f \in C^s([0,1]^d)$ with $s>1\in\mathbb{N}^+$ satisfies $\| \partial^{\bm{\alpha}} f  \|_{L^{\infty}([0,1]^d)} < 1$ for any $\bm{\alpha} \in \mathbb{N}^d$ with $|\xal| \leq s$. For any $N,L \in \mathbb{N}^+$ and $p\in[1,\infty)$, there exists a $\sigma_1$-$\tn{NN}$ $\phi$ with width $16s^{d+1}d(N+2)\log_2{( {8}N)}$ and depth $27s^2 (L+2)\log_2(4L)$ such that
\[
\| f - \phi \|_{\mathcal{W}^{1,p}([0,1]^d)} \leq   {85}(s+1)^d 8^{s}N^{-2(s-1)/d}L^{-2(s-1)/d}.
\]
\end{theorem}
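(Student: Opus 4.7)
The plan is to realize a local Taylor polynomial approximation of $f$ on a fine uniform partition of $[0,1]^d$ by a ReLU network. Choose $M\sim N^{2/d}L^{2/d}$ and partition the cube into $M^d$ axis-aligned subcubes $Q_{\xk}$ of side $1/M$, indexed by $\xk\in\{0,\dots,M-1\}^d$, with corner $\xx_{\xk}=\xk/M$. On each $Q_{\xk}$ replace $f$ by the degree-$(s-1)$ Taylor polynomial
\[
P_{\xk}(\xx)=\sum_{|\xal|\leq s-1}\tfrac{\partial^{\xal}f(\xx_{\xk})}{\xal!}(\xx-\xx_{\xk})^{\xal}.
\]
Taylor's remainder, combined with the assumption $\|\partial^{\xal}f\|_{L^\infty}<1$ for $|\xal|\leq s$, yields $\|f-P_{\xk}\|_{L^\infty(Q_{\xk})}=O(M^{-s})$ and $\|\partial_j(f-P_{\xk})\|_{L^\infty(Q_{\xk})}=O(M^{-(s-1)})$ for each coordinate $j$. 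Consequently the piecewise polynomial $P=\sum_{\xk}P_{\xk}\mathbf{1}_{Q_{\xk}}$ already approximates $f$ in $\W^{1,p}([0,1]^d)$ at the target rate $M^{-(s-1)}\sim N^{-2(s-1)/d}L^{-2(s-1)/d}$, up to the discontinuities across cube faces which contribute nothing in $L^p$ since they form a null set.

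Second, I would construct a ReLU network $\phi$ that realizes $P$ up to the same accuracy by composing three sub-blocks. First, an index-map block that outputs, approximately, the cube label $\xk(\xx)$ of the subcube containing $\xx$; a coordinatewise step-function ladder built from ReLU gives the exact integer label outside a thin ``trifling'' region $\Omega_{\rm tri}\subset[0,1]^d$ of arbitrarily small Lebesgue measure. Second, a coefficient look-up block that, given $\xk$, retrieves each Taylor coefficient $\partial^{\xal}f(\xx_{\xk})/\xal!$ for $|\xal|\leq s-1$; the $M^d$ scalars per multi-index are packed in via a bit-extraction/piecewise-constant fitting block, which is what drives the width to $O(N\log N)$ and the depth to $O(L\log L)$. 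Third, a polynomial-evaluation block that multiplies these coefficients into $(\xx-\xx_{\xk})^{\xal}$ using iterated approximate multiplication; a ReLU subnetwork approximates $xy$ on $[-1,1]^2$ to accuracy $\epsilon$ with constant width and $O(\log(1/\epsilon))$ depth, and choosing $\epsilon\lesssim M^{-s}$ absorbs the multiplication error into the Taylor remainder. Summing over the $\binom{d+s-1}{s-1}\leq(s+1)^d$ multi-indices of order $\leq s-1$ produces the combinatorial factor $(s+1)^d$, and crude bounds on Taylor coefficients combined with the diameter $\sqrt{d}/M$ of $Q_{\xk}$ together with the factorials $\xal!\geq 1$ contribute the $8^s$ factor.

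The main obstacle is the passage from $L^\infty$ control on $[0,1]^d\setminus\Omega_{\rm tri}$ to a global $\W^{1,p}$ estimate. On the good set $\phi$ agrees, up to $O(M^{-s})$, with a polynomial that shares all derivatives of order $\leq s-1$ with $f$ at $\xx_{\xk}$, so the pointwise errors of $f-\phi$ and of $\partial_j(f-\phi)$ are controlled directly by Taylor's theorem. On $\Omega_{\rm tri}$ both $\nabla f$ and $\nabla\phi$ are merely bounded, so the gradient contribution is at most $|\Omega_{\rm tri}|^{1/p}\|\nabla f-\nabla\phi\|_{L^\infty}$; taking $|\Omega_{\rm tri}|$ suitably small at a merely logarithmic cost in the network size absorbs this into the main rate. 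The delicate piece is verifying a chain-rule-level statement for the product subnetwork, namely that its weak partial derivatives differ from those of the exact monomial by $O(\epsilon)$ almost everywhere on $Q_{\xk}$; this is what promotes a classical $L^\infty$ deep-network rate to the $\W^{1,p}$ rate stated in the theorem, and it is the step where the explicit constants $85$, $(s+1)^d$ and $8^s$ must be tracked with care rather than absorbed into a generic $O(\cdot)$.
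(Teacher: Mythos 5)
Your proposal follows essentially the same route as the paper: a uniform partition into $\sim N^2L^2$ cubes, local degree-$(s-1)$ Taylor polynomials, a step-function index block plus a point-matching block for the coefficients, iterated approximate-multiplication subnetworks whose weak derivatives are controlled in $\mathcal{W}^{1,\infty}$, and a trifling region of small measure whose gradient contribution is absorbed via a uniform $\mathcal{W}^{1,\infty}$ bound on $\phi$. This matches the paper's construction and error decomposition, including the step you correctly flag as delicate (the $\mathcal{W}^{1,\infty}$ chain-rule estimate for the composed product networks), so no further comment is needed.
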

}

{For smooth functions, we show in Theorem \ref{coro:function_sigma_1} that deep ReLU networks of width $\mathcal{O}(N\log{N})$ and of depth $\mathcal{O}(L\log{L})$ can achieve an approximation rate of $\mathcal{O}(N^{-2(s-1)/d}L^{-2(s-1)/d})$ with respect to the $\mathcal{W}^{1,p}([0,1]^d)$ norm. Note that ReLU networks are in fact piecewise linear functions, so they have at most nonzero first (weak) derivatives. The above approximation rate in the $L^{p}$ norm estimated in terms of $N$ and $L$ was already covered in \cite{lu2020deep}. When the $\mathcal{W}^{n,p}$ norm is considered, where $0<n<1$ is not an integer, the interpolation technique used in \cite{Guhring2020} can be combined together with our method here to develop new approximation rates, which is left as future work.}

{
To achieve rates truly in terms of width and depth without the logarithm terms, we also have the following corollary by setting $\widetilde{N}=\mathcal{O}(N\log N)$ and $\widetilde{L}=\mathcal{O}(L\log L)$ and making use of
\[
(N\ln N)^{-2(s-1)/d}(L\ln L)^{-2(s-1)/d} \leq \mathcal{O}(N^{-2(s-\rho)/d}L^{-2(s-\rho)/d} )
\]
for $\rho \in (1,s)$.
\begin{corollary}\label{coro:function_sigma_1_linear}
Suppose that $f \in C^s([0,1]^d)$ with $s>1\in\mathbb{N}^+$ satisfies $\| \partial^{\bm{\alpha}} f  \|_{L^{\infty}([0,1]^d)} < 1$ for any $\bm{\alpha} \in \mathbb{N}^d$ with $|\xal| \leq s$. For any $N,L \in \mathbb{N}^+$, $\rho \in (1,s)$, and $p\in[1,\infty)$, there exist $C_1(s,d)$, $C_2(s,d)$, $C_3(s,d,\rho)$, and a $\sigma_1$-$\tn{NN}$ $\phi$ with width $C_1N$ and depth $C_1L$ such that
\[
\| f - \phi \|_{\mathcal{W}^{1,p}([0,1]^d)} \leq  C_3 N^{-2(s-\rho)/d}L^{-2(s-\rho)/d}.
\]
\end{corollary}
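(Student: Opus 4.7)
The plan is to invoke Theorem \ref{coro:function_sigma_1} with auxiliary parameters $\widetilde{N}, \widetilde{L}$ chosen slightly smaller than $N, L$, so that the logarithmic overhead in that theorem's width and depth bounds gets absorbed into the prescribed linear bounds $C_1 N$ and $C_2 L$. The cost of this trade is a modest inflation of the error exponent, which I then control using the fact that $\log$ grows slower than any positive power.

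Concretely, I would set $\widetilde{N} := \max\{1, \lceil N/\log_2(N+3)\rceil\} \in \mathbb{N}^+$ and $\widetilde{L} := \max\{1, \lceil L/\log_2(L+3)\rceil\} \in \mathbb{N}^+$. A direct estimate yields an absolute constant $c$ with $(\widetilde{N}+2)\log_2(8\widetilde{N}) \le c\, N$ and $(\widetilde{L}+2)\log_2(4\widetilde{L}) \le c\, L$, so the network $\phi$ produced by Theorem \ref{coro:function_sigma_1} applied with parameters $\widetilde{N}, \widetilde{L}$ has width at most $C_1(s,d)\, N$ and depth at most $C_2(s,d)\, L$, as required by the statement.

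For the error, Theorem \ref{coro:function_sigma_1} yields the bound $85(s+1)^d\, 8^{s}\, \widetilde{N}^{-2(s-1)/d}\, \widetilde{L}^{-2(s-1)/d}$. Using the lower bounds $\widetilde{N} \ge N/(2\log_2(N+3))$ and $\widetilde{L} \ge L/(2\log_2(L+3))$, this becomes $N^{-2(s-1)/d}L^{-2(s-1)/d}$ multiplied by the extra factor $(\log_2(N+3))^{2(s-1)/d}(\log_2(L+3))^{2(s-1)/d}$. Fixing any $\rho \in (1,s)$, the elementary inequality $\log_2(x+3) \le C'(s,d,\rho)\, x^{(\rho-1)/(s-1)}$, valid for all $x \ge 1$, absorbs these logs into $N^{2(\rho-1)/d}L^{2(\rho-1)/d}$, producing the claimed bound with some constant $C_3(s,d,\rho)$.

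The argument is essentially bookkeeping; the only minor technical point is to verify that $\widetilde{N}, \widetilde{L}$ remain positive integers for every $N, L \ge 1$ and that the constants $C_1, C_2, C_3$ can be chosen uniformly. There is no genuine obstacle to overcome beyond these routine checks, since Theorem \ref{coro:function_sigma_1} already carries all the structural content.
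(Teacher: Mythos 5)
Your proposal is correct and is essentially the same argument the paper intends: the paper only sketches this corollary in one line (relabeling $\widetilde{N}=\mathcal{O}(N\log N)$, $\widetilde{L}=\mathcal{O}(L\log L)$ and trading the logarithms for a loss in the exponent via $\rho\in(1,s)$), and your version simply runs the same change of variables in the other direction by pre-shrinking $N,L$ to $N/\log_2(N+3)$, $L/\log_2(L+3)$ before invoking Theorem \ref{coro:function_sigma_1}. The bookkeeping you describe (integrality of $\widetilde{N},\widetilde{L}$, absorption of $(\log_2(N+3))^{2(s-1)/d}$ into $N^{2(\rho-1)/d}$) is sound.
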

Note that the constants $C_1,C_2$ and $C_3$ in Corollary \ref{coro:function_sigma_1_linear} can be explicitly determined and we leave it to readers.}

{To obtain approximation results in terms of the number of parameters in neural networks, the following corollary is followed by setting $N=\mathcal{O}(1)$ and $\epsilon=\mathcal{O}(L^{-2(s-1)/d})$ from Theorem \ref{coro:function_sigma_1}.
\begin{corollary}\label{coro:function_sigma_1_assym}
Suppose that $f \in C^s([0,1]^d)$ with $s>1\in\mathbb{N}^+$ satisfies $\| \partial^{\bm{\alpha}} f  \|_{L^{\infty}([0,1]^d)} < 1$ for any $\bm{\alpha} \in \mathbb{N}^d$ with $|\xal| \leq s$. Given any $\epsilon>0$ {and $p\in[1,\infty)$}, there exists a $\sigma_1$-$\tn{NN}$ $\phi$ with $\mathcal{O}(\epsilon^{-d/(2(s-1))}\ln \frac{1}{\epsilon})$ parameters such that
\[
\| f - \phi \|_{\mathcal{W}^{1,p}([0,1]^d)} \leq  \epsilon.
\]
\end{corollary}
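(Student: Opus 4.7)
The plan is to specialize Theorem~\ref{coro:function_sigma_1} by holding the width parameter $N$ at a constant value and letting only the depth parameter $L$ grow with the error tolerance $\epsilon$. Concretely, I would take $N=1$ (or any fixed positive integer depending only on $s$ and $d$), so that the width $16 s^{d+1} d(N+2)\log_2(8N)$ of the approximant collapses to a constant $C_1=C_1(s,d)$, while the depth becomes $C_2(s,d)\, L\log_2(4L)=\mathcal{O}(L\log L)$.

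With this choice, the $\mathcal{W}^{1,p}$-error bound of Theorem~\ref{coro:function_sigma_1} reduces to $C_3(s,d)\, L^{-2(s-1)/d}$. Requiring the right-hand side to be at most $\epsilon$ forces
\[
L \;=\; \bigl\lceil (C_3/\epsilon)^{d/(2(s-1))} \bigr\rceil \;=\; \mathcal{O}\bigl(\epsilon^{-d/(2(s-1))}\bigr),
\]
and taking $L$ to be the smallest such positive integer suffices, since the error bound is monotonically decreasing in $L$.

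It then remains to count parameters. Any fully connected feedforward network whose layer widths are bounded by $W$ and whose depth is $D$ carries $\mathcal{O}(W^2 D)$ weights and biases. In our construction $W=\mathcal{O}(1)$ and $D=\mathcal{O}(L\log L)$, so the total is $\mathcal{O}(L \log L)$. Substituting $L=\mathcal{O}(\epsilon^{-d/(2(s-1))})$, which gives $\log L = \mathcal{O}(\log(1/\epsilon))$, yields the advertised parameter count $\mathcal{O}\!\left(\epsilon^{-d/(2(s-1))}\log(1/\epsilon)\right)$.

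There is essentially no serious obstacle here: given Theorem~\ref{coro:function_sigma_1}, the corollary reduces to bookkeeping. The only mild points of care are rounding $L$ up to an integer (harmless by monotonicity) and absorbing the $s$- and $d$-dependent multiplicative constants into the $\mathcal{O}$-notation. If one wished to make the hidden constant in the parameter count explicit, it could be traced through the explicit constants $C_1$, $C_2$, $C_3$ computed above.
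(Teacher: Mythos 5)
Your proposal is correct and is exactly the route the paper takes: it obtains the corollary from Theorem~\ref{coro:function_sigma_1} by fixing $N=\mathcal{O}(1)$, choosing $L=\mathcal{O}(\epsilon^{-d/(2(s-1))})$ so that the error bound is at most $\epsilon$, and counting $\mathcal{O}(W^2 D)=\mathcal{O}(L\log L)$ parameters. The bookkeeping in your write-up is accurate, so there is nothing to add.
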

Corollary \ref{coro:function_sigma_1_assym} partially recovers \cite[Corollary 4.1]{Guhring2020} in which the parameters are of order $\mathcal{O}(\epsilon^{-d/(s-1)}\ln^2 ( {\epsilon}^{-s/(s-1)})  )$.
}

Considering a smoother neural network in which either the ReLU function {$\sigma_1$} or its square {$\sigma_1^2$} is applied, we provide similar results in the following Theorem \ref{corol:function_sigma_2}. Namely, these networks of width $\mathcal{O}(N\log{N})$ and of depth $\mathcal{O}(L\log{L})$ can achieve the approximation rate of $\mathcal{O}(N^{-2(s-n)/d}L^{-2(s-n)/d})$ with respect to the $\mathcal{W}^{n,p}([0,1]^d)$ norm. It is worth noting that the confinement of space is now relaxed to $\mathcal{W}^{n,p}([0,1]^d)$ from $\mathcal{W}^{1,p}([0,1]^d)$ with such smoother networks.

\begin{theorem}\label{corol:function_sigma_2}
Suppose that $f \in C^s([0,1]^d)$ with $s\in \mathbb{N}^+$ satisfies $\| \partial^{\bm{\alpha}} f  \|_{L^{\infty}([0,1]^d)} < 1$ for any $\bm{\alpha} \in \mathbb{N}^d$ with $|\xal| \leq s$. For any $N,L \in \N^+$ satisfying $(L-2-\log_2 N)N \geq s$, and $p \in [1,\infty)$, there exists a $\sigma_2$-$\tn{NN}$ $\phi$ with width $16s^{d+1}d(N+2)\log_2{({8}N)}$ and depth $10(L+2)\log_2(4L)$ such that
\[
\| f - \phi \|_{\mathcal{W}^{n,p}([0,1]^d)} \leq  {3}(s+1)^d 8^{s-n}N^{-2(s-n)/d}L^{-2(s-n)/d},
\]
where $n < s$ is a positive integer.
\end{theorem}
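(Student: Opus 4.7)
The plan is to parallel the architecture behind Theorem \ref{coro:function_sigma_1}, but to exploit two features of $\sigma_2$: the identity $2xy = \sigma_2(x+y) - \sigma_2(x) - \sigma_2(y)$ (valid after a shift making inputs nonnegative) which lets a $\sigma_2$-network evaluate monomials exactly rather than just approximately, and the fact that repeated compositions $\underbrace{\sigma_2 \circ \cdots \circ \sigma_2}_{j \text{ times}} = \sigma_{2^j}$ become smoother with depth, $\sigma_{2^j} \in C^{2^j - 1}$, which is what will let us climb to $\W^{n,p}$ for $n \ge 2$. The depth improvement from $27 s^2 (L+2)\log_2(4L)$ to $10 (L+2) \log_2(4L)$ in the statement is a direct gain from not having to approximate polynomials.

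First I would partition $[0,1]^d$ into $K^d$ axis-aligned sub-cubes of side $1/K$ with $K \sim (NL)^{2/d}$, and at each corner $\xk / K$ define the degree-$(s-1)$ Taylor polynomial $P_\xk$ of $f$. The standard Taylor remainder estimate gives $\|\partial^{\bm{\gamma}} (f - P_\xk)\|_{L^\infty(Q_\xk)} \le C(s,d)\, K^{-(s-|\bm{\gamma}|)}$ for every $|\bm{\gamma}| \le s-1$. Next I would construct a ``selector'' sub-network using the Shen--Yang--Zhang bit-extraction idea that, on a trifling-free set $\Omega^* = [0,1]^d \setminus E$ with $|E| = \OO(K^{-1})$, outputs the correct index $\xk(\xx)$ of the cube containing $\xx$, with width $\OO(N \log N)$ and depth $\OO(L \log L)$. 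Then the $\sigma_2$ product gadget assembles $P_{\xk(\xx)}(\xx)$ exactly, so that the full network $\phi$ coincides with $P_{\xk(\xx)}$ on $\Omega^*$. The hypothesis $(L - 2 - \log_2 N) N \ge s$ reflects the need for enough residual depth after the $\log_2 N$ bit-extraction layers to evaluate a degree-$s$ polynomial with $N$ parallel channels, and the width $16 s^{d+1} d (N+2) \log_2(8N)$ accounts for the $\OO(s^d)$ monomials across $\OO(N)$ centers, plus the selector overhead.

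For the error bound, on $\Omega^*$ the assembled $\phi$ is a polynomial of degree $\le s-1$ locally, so for every multi-index $|\bm{\gamma}| \le n$,
\[
\|\partial^{\bm{\gamma}}(f - \phi)\|_{L^p(\Omega^*)} \le C(s,d)\, K^{-(s-n)} \le C'(s,d)\, (NL)^{-2(s-n)/d}.
\]
On the exceptional set $E$ I would bound $\|\phi\|_{\W^{n,\infty}(E)}$ by a polynomial factor in $N,L$ via layerwise Lipschitz estimates on the $\sigma_2$ gadgets, combining this with $|E| = \OO(K^{-1})$ to show $\|\partial^{\bm{\gamma}}(f - \phi)\|_{L^p(E)}$ is absorbed into the same rate. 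Tracking the constants carefully through each stage should yield the stated factor $3(s+1)^d 8^{s-n}$.

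The hard part is the global Sobolev regularity of $\phi$: since $\sigma_2$ is merely $C^1$, a naive selector-times-polynomial assembly produces a $\phi$ whose weak derivatives of order $\ge 2$ carry singular contributions on $(d-1)$-dimensional sub-cube interfaces, which is incompatible with $\W^{n,p}([0,1]^d)$ for $n \ge 2$. I would resolve this either by replacing the hard selector by a $C^{n-1}$ partition of unity built from $\sigma_{2^j}$-type bumps with $2^j - 1 \ge n - 1$, or by verifying that the jump of $\partial^{\bm{\gamma}} \phi$ across a shared face is of size $K^{-(s - |\bm{\gamma}|)}$ because adjacent Taylor polynomials $P_{\xk}$ and $P_{\xk + \bm{e}_i}$ agree to higher order along that face, so that the distributional singularities are quantitatively small and can be controlled in $L^p$ over the union of thin shells. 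This last estimate, together with the bookkeeping of constants through the exact polynomial gadgets, is where essentially all the work beyond the $\sigma_1$ case of Theorem \ref{coro:function_sigma_1} takes place.
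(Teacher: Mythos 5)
Your skeleton is the paper's: partition $[0,1]^d$ into $K^d$ cubes with $K=\lfloor N^{1/d}\rfloor^2\lfloor L^{2/d}\rfloor$, attach the degree-$(s-1)$ Taylor polynomial at each cube's corner, select the corner with the step-function/point-matching subnetworks, and use the fact that $\sigma_2$-networks realize $xy$ and $\xx^{\bm{\alpha}}$ \emph{exactly} (Lemma \ref{lem:NNex2}), which is precisely why the depth drops from $27s^2(L+2)\log_2(4L)$ to $10(L+2)\log_2(4L)$. The genuine gap is in your passage from the good set to the global $\mathcal{W}^{n,p}$ bound. You fix the exceptional set $E$ to have measure $\mathcal{O}(K^{-1})$ and propose to control $\|\phi\|_{\mathcal{W}^{n,\infty}(E)}$ only \emph{polynomially} in $N,L$; then $\|f-\phi\|^p_{\mathcal{W}^{n,p}(E)}\lesssim |E|\cdot\mathrm{poly}(N,L)^p\sim (NL)^{-2/d}\,\mathrm{poly}(N,L)^p$, which does not match the target rate $(NL)^{-2(s-n)p/d}$ for general $p$ and $s$. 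The paper's argument needs two ingredients you are missing: (i) the trifling region $\Omega([0,1]^d,K,\delta)$ has measure at most $Kd\delta$ where $\delta$ is a \emph{free} parameter --- the architecture and the $\mathcal{W}^{n,\infty}$ error off the trifling region are independent of $\delta$; and (ii) because the product and monomial gadgets are exact, the network obeys the uniform bound $\|\phi\|_{\mathcal{W}^{n,\infty}([0,1]^d)}\le s^d$ with a constant independent of $N$, $L$ and $\delta$ (this is part of the statement of Theorem \ref{thm:function_sigma_2}). One then chooses $\delta$ so small that $Kd\delta\,(s^d)^p\le (N^{-2(s-n)/d}L^{-2(s-n)/d})^p$, and the trifling-region contribution is absorbed into the prefactor ($2\to 3$). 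Without the free parameter $\delta$ and an $N,L$-independent sup-norm bound on the derivatives of $\phi$, the argument does not close.

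The interface-regularity issue you flag as ``the hard part'' is not where the paper's work lies, and neither of your proposed remedies appears there. The cubes $Q_{\bm{\beta}}$ are pairwise separated by the trifling strips, so $[0,1]^d\setminus\Omega([0,1]^d,K,\delta)$ is a disjoint union of closed cubes on each of which $\phi$ is a genuine polynomial (the selector output $\bm{\psi}(\xx)$ is constant there); there are no shared faces to worry about, and all non-smoothness of $\phi$ is confined to the trifling region, which is handled purely by the measure argument above. In particular, the paper never builds a $C^{n-1}$ partition of unity and never estimates jumps of $\partial^{\bm{\gamma}}\phi$ across faces; pursuing either would amount to proving a stronger, genuinely global regularity statement than the one the paper establishes. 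If you follow the paper's route, your effort should instead go into verifying the uniform bound $\|\phi\|_{\mathcal{W}^{n,\infty}([0,1]^d)}\le s^d$ and into the Taylor-remainder bookkeeping for each $|\bm{\gamma}|\le n$ (the terms $E_{1,1}$, $E_{1,2}$, $E_2$ with $E_{1,2}=0$ thanks to exactness), which is where the stated constant $3(s+1)^d8^{s-n}$ actually comes from.
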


Our work developed here focuses particularly on the Sobolev norms which are suitable for studying partial differential equations. {The above results concern the ReLU$^k$ network approximation of functions with respect to Sobolev norms. Along this research direction, the work in \cite{Guhring2020} provides asymptotic upper bounds with unknown parameters with respect to $\mathcal{W}^{n,p}$ norm, where $n$ is a fraction between zero and one, based on the means of localized polynomials. Making use of the same polynomials, asymptotic approximation results measured with respect to high order $\mathcal{W}^{n,p}$ norm were provided in \cite{2020arXiv200616822G}. These results were developed for neural networks with smoother activation functions, in addition to ReLU networks. Measured in $\mathcal{W}^{1,p}$ norm, it was shown in \cite{doi:10.1142/S0219530519410136} that ReLU networks can achieve essentially the same approximation rates as free-knot spline approximations. In contrast with these existing results, our approximation results are nonasymptotic with known pre-factors, established in the spirit of explicit error characterization methodology developed in \cite{lu2020deep,shen2020deep}. }

Recall from \cite[Theorem 4.2]{DeVore_Howad_micchelli_1989} by DeVore et al., we have the following ostensible negative result on the continuity of the weight selection. Suppose that there is a continuous map $\Sigma$ from the unit ball of Sobolev space with smoothness $n$, i.e. $\Wnp$, to $\mathbb{R}^{W}$ such that $ \|f - g(\Sigma(f)) \|_{L^p} \leq \epsilon$ for all $f \in \Wnp$, where $W$ denotes a fixed number of parameters and $g$ is a map realizing a deep neural network from a given set of parameters in $\mathbb{R}^{W}$ to the unit ball in $\Wnp$, then $W \geq C \epsilon^{-n/d}$ with some constant $C$ depending only on $n$. This in a way means any such constructive approximation of ReLU networks cannot have a continuous weight selection property if the approximation rate is better than $C \epsilon^{-n/d}$, and hence a stable numerical implementation with such an error rate does not exist. It must, however, note that \cite[Theorem 4.2]{DeVore_Howad_micchelli_1989} is basically a min-max criterion for evaluating continuous weight selection maps, describing the worst case. That is, the approximation result is obtained by minimizing over all continuous maps $\Sigma$ and network realizations $g$ and maximizing over all target functions. For most smooth functions practically encountered in applications, the theorem does not eliminate the possible cases in which they might still enjoy a continuous weight selection. Thus, there could be a stable numerical algorithm that can achieve our derived approximation rate. {In other words, there is a special subclass of functions which arise in practice for which a continuous assignment of the weights exists. It is interesting future work to characterize such a subclass. Finally, to the best of our knowledge, there is not any efficient numerical algorithm} to achieve the approximation rate with continuous weight selection especially when the dimension is large. Therefore, approximation results with or without continuous weight selection both have the difficulty of numerical implementations. Designing numerical algorithms to achieve these rates has been an active research field recently.

{It is remarked that providing error estimate in the context of high-dimensional PDE problem using the newly introduced Floor-ReLU networks \cite{shen2020deep}, which are fully connected neural networks with either Floor $\lfloor x \rfloor$ or ReLU activation function in each neuron, will be an intriguing option for future work, since these networks conquer the curse of dimensionality in terms of approximation theory.} Other neural networks can also be considered. Along this line of work, the novel Floor-Exponential-Step networks with merely three hidden layers and width of order $\mathcal{O}(N)$ constructed in \cite{SHEN2021160} can approximate $d$-dimensional Lipschitz continuous functions with an exponentially small error rate of order $\mathcal{O}(\sqrt{d}2^{-N})$. In \cite{2021arXiv210702397S}, neural networks with a simple and computable continuous activation function and a fixed finite number of neurons were developed, which achieve the universal approximation property for all high-dimensional continuous functions.

This paper is {organized} as follows. In Section \ref{section:pre}, Sobolev spaces are briefly introduced and a number of useful existing results on ReLU network approximation are given. We provide our main approximation results for ReLU networks in Section \ref{section:main}. {Several auxiliary results concerning approximating high order polynomials using ReLU networks are first provided in Section \ref{section:ReLU}, and our main results on ReLU network are given in Section \ref{subsect:proof_thm_sigma_1}}. In addition to ReLU networks, we show similar results for the smoother neural networks with ReLU square in Section \ref{section:signma_2}. 

\section{Preliminaries}\label{section:pre}

We provide in this section some useful preliminaries of notations and basic approximation results.

\subsection{Deep Neural Networks}

Let us summarize all basic notations used in deep neural networks as follows.
\begin{enumerate}

     \item Matrices are denoted by bold uppercase letters. For instance,  $\bm{A}\in\mathbb{R}^{m\times n}$ is a real matrix of size $m\times n$, and $\bm{A}^T$ denotes the transpose of $\bm{A}$. 
     
     \item Vectors are denoted as bold lowercase letters. For example, $\bm{v}\in \R^n$ is a column vector of size $n$. Correspondingly, $\bm{v}(i)$ is the $i$-th element of $\bm{v}$. $\bm{v}=[v_1,\cdots,v_n]^T=\left[\hspace{-4pt}\begin{array}{c}
    \vspace{-3pt} v_1 \\ \vspace{-5pt} \vdots \\ v_n
     \end{array}\hspace{-4pt}\right]$ are vectors consisting of numbers $\{v_i\}$ with $\bm{v}(i)=v_i$.
     
      \item 
    A $d$-dimensional multi-index is a $d$-tuple
    $\xal=[\alpha_1,\alpha_2,\cdots,\alpha_d]^T\in \N^d.$
    Several related notations are listed below.
    \begin{enumerate}
        \item  $|\xal|=|\alpha_1|+|\alpha_2|+\cdots+|\alpha_d|$;
        \item $\xx^\xal=x_1^{\alpha_1}  x_2^{\alpha_2} \cdots x_d^{\alpha_d}$, where $\xx=[x_1,x_2,\cdots,x_d]^T$;
        \item $\xal!=\alpha_1!\alpha_2!\cdots \alpha_d!$;
    \end{enumerate}

    \item Let $B_{r,|\cdot|}(\xx)\subseteq \R^d$ be the closed ball with a center $\xx\subseteq \R^d$ and a radius $r$ measured by the Euclidean distance. Similarly, $B_{r,\|\cdot\|_{\ell^\infty}}(\xx)\subseteq \R^d$ is a ball measured by the discrete $\ell^\infty$-norm of a vector.
          
     \item Assume $\bm{n}\in \N^n$, then $f(\bm{n})=\mathcal{O}(g(\bm{n}))$ means that there exists positive $C$ independent of $\bm{n}$, $f$, and $g$ such that $ f(\bm{n})\le Cg(\bm{n})$ when all entries of $\bm{n}$ go to $+\infty$.
     
     \item {We use $\sigma$ to denote an activation function}. Let $\sigma_1:\R\to \R$ denote the rectified linear unit (ReLU), i.e. $\sigma_1(x)=\max\{0,x\}$. With the abuse of notations, we define $\sigma_1:\R^d\to \R^d$ as $\sigma_1(\xx)=\left[\begin{array}{c}
          \max\{0,x_1\}  \\
          \vdots \\
          \max\{0,x_d\}
     \end{array}\right]$ for any $\xx=[x_1,\cdots,x_d]^T\in \R^d$.
     
     \item {Furthermore, we let the activation function $\sigma_2:\R\to \R$ be either $\sigma_1$ or $\sigma_1^2$. Similar to $\sigma_1$, we define the action of $\sigma_2$ on a vector $\xx$.}

     \item We will use $\tn{NN}$ as a neural network for short and \textbf{$\sigma_r$-$\tn{NN}$ to specify an $\tn{NN}$ with activation functions $\sigma_t$ with $t\leq r$}. We will also use Python-type notations to specify a class of $\tn{NN}$'s, e.g., $\sigma_1$-$\tn{NN}(\tn{c}_1;\ \tn{c}_2;\ \cdots;\ \tn{c}_m)$ is a set of ReLU FNNs satisfying $m$ conditions given by $\{\tn{c}_i\}_{1\leq i\leq m}$, each of which may specify the number of inputs ($\NNinput$), the total number of nodes in all hidden layers ($\#$node), the number of hidden layers ($\#$layer), the number of total parameters ($\#$parameter), and the width in each hidden layer (widthvec), the maximum width of all hidden layers (maxwidth), etc. For example, if $\phi\in \sigma_1$-$\tn{NN}(\NNinput=2; \NNwidth=[100,100])$,  then $\phi$  satisfies
     \begin{enumerate}
         \item $\phi$ maps from $\R^2$ to $\R$.
         \item $\phi$ has two hidden layers and the number of nodes in each hidden layer is $100$.
     \end{enumerate}
     \item $[n]^L$ is short for $[n,n,\cdots,n]\in \N^L$. 
     For example, \[\tn{NN}(\NNinput=d;\NNwidth=[100,100])=\tn{NN}(\NNinput=d;\NNwidth=[100]^2).\]
     
     \item For $\phi\in \sigma$-$ \tn{NN}(\NNinput=d;\NNwidth=[N_1,N_2,\cdots,N_L])$, if we define $N_0=d$ and $N_{L+1}=1$, then the architecture of $\phi$ can be briefly described as follows:
    \begin{equation*}
    \begin{aligned}
    \bm{x}=\tilde{\bm{h}}_0 \myto^{\bm{W}_1,\ \bm{b}_1} \bm{h}_1\mathop{\longrightarrow}^{\sigma} \tilde{\bm{h}}_1 \cdots \myto^{\bm{W}_L,\ \bm{b}_L} \bm{h}_L\mathop{\longrightarrow}^{\sigma} \tilde{\bm{h}}_L \mathop{\myto}^{\bm{W}_{L+1},\ \bm{b}_{L+1}} \phi(\bm{x})=\bm{h}_{L+1},
    \end{aligned}
    \end{equation*}
    where $\bm{W}_i\in \R^{N_{i}\times N_{i-1}}$ and $\bm{b}_i\in \R^{N_i}$ are the weight matrix and the bias vector in the $i$-th linear transform in $\phi$, respectively, i.e., \[\bm{h}_i :=\bm{W}_i \tilde{\bm{h}}_{i-1} + \bm{b}_i,\quad \tn{for $i=1$, $\dots$, $L+1$,}\]  and
    \[
       \tilde{\bm{h}}_i=\sigma(\bm{h}_i),\quad \tn{for $i=1$, $\dots$, $L$.}
    \]
    $L$ in this paper is also called the number of hidden layers in the literature.

     \item The expression, an FNN with width $N$ and depth $L$, means
     \begin{enumerate}
         \item The maximum width of this FNN for all hidden layers less than or equal to $N$.
         \item The number of hidden layers of this FNN less than or equal to $L$.
     \end{enumerate}
\end{enumerate}

\subsection{Sobolev Spaces}

We will use $D$ to denote the weak derivative of a single variable function and $D^\xal$ to denote the partial derivative $D^{\alpha_1}_1 D^{\alpha_2}_2 \dots D^{\alpha_d}_d$ of a $d$-dimensional function with $\alpha_i$ as the order of derivative $D_i$ in the $i$-th variable and $\xal=[\alpha_1,\dots,\alpha_d]^T$. Let $\Omega$ denote an open subset of $\mathbb{R}^d$ and $L^p(\Omega)$ be the standard Lebesgue space on $\Omega$ for $p\in[1,\infty]$. We write $\nabla f:=[D_1 f, \dots, D_d f]^T$. $\partial \Omega$ is the boundary of $\Omega$. Let $\mu(\cdot)$ be the Lebesgue measure.  For $f(x)\in\Wnp(\Omega)$, we use the notation
\[
\|f\|_{\Wnp(\Omega)} = \|f\|_{\Wnp} = \|f(x)\|_{\Wnp(\Omega,\mu)},
\]
if the domain is clear from the context and we use the Lebesgue measure. 

\begin{definition} (Sobolev Space) Let $n\in \mathbb{N}_0$ and $1\leq p\leq \infty$. Then we define the Sobolev space 
\[
\Wnp(\Omega):=\{ f\in L^p(\Omega):D^\xal f \in L^p(\Omega)\text{ for all }\xal\in \mathbb{N}_0^d \text{ with }|\xal|\leq n \}
\]
with a norm
\[
\|f\|_{\Wnp(\Omega)}:=\Bigg( \sum_{0\leq |\xal| \leq n} \|D^\xal f\|^p_{L^p(\Omega)} \Bigg)^{1/p},
\]
if $p<\infty$, and 
\[
\|f\|_{\Wni(\Omega)} := \max_{0\leq |\xal|\leq n} \| D^\xal f\|_{L^\infty(\Omega)}.
\]
\end{definition}

\subsection{Auxiliary Neutral Network Approximation Results}

We first give the following useful lemmas on several ReLU networks approximation results with explicit error characterization measured in the $L^{\infty}$ norm for polynomials.

\begin{lemma}\label{lem:NNex1}
The followings lemmas are satisfied by $\sigma_1$-$\tn{NN}$s.
\begin{enumerate}[label=(\roman*)]
\item Any one-dimensional continuous piecewise linear function with $N$ breakpoints can be exactly realized by a one-dimensional $\sigma_1$-$\tn{NN}$ with one-hidden layer and $N$ neurons.
\item Any identity map in $\R^d$ can be exactly realized by a d-dimensional $\sigma_1$-$\tn{NN}$ with one hidden layer and $2d$ neurons.
\item (\cite[Lemma 5.1]{lu2020deep}) For any $N,L\in \N^+$, there exists a $\sigma_1$-$\tn{NN}$ $\phi$ with width $3N$ and depth $L$ such that
    \begin{equation*}
    \|\phi(x)-x^2\|_{L^{\infty}([0,1])} \le N^{-L}.
    \end{equation*}
\item (\cite[Lemma 4.2]{lu2020deep}) For any $N,L\in \N^+$ and $a,b\in \R$ with $a<b$, there exists a $\sigma_1$-$\tn{NN}$ $\phi$ with width $9N+1$ and depth $L$ such that
    \begin{equation*}
    \|\phi(x,y)-xy \|_{L^{\infty}([a,b]^2)}  \le 6(b-a)^2N^{-L}.
    \end{equation*}
    \item (\cite[Theorem 4.1]{lu2020deep}) Assume $P(\xx)=\xx^\xal=x_1^{\alpha_1}x_2^{\alpha_2}\cdots x_d^{\alpha_d}$ for $\xal\in \N^d$ with $\|\xal\|_1 \leq k \in \N^+$. For any $N,L\in \N^+$, there exists a $\sigma_1$-$\tn{NN}$ $\phi$ with width $9(N+1)+k-1$ and depth $7k^2L$ such that
    \begin{equation*}
    \|\phi(\xx)-P(\xx)\|_{L^{\infty}([0,1]^d)}\le 9k(N+1)^{-7kL}.
    \end{equation*}
  
\end{enumerate}
\end{lemma}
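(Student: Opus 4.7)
The plan is to verify the five items of Lemma \ref{lem:NNex1} in order, noting that items (iii)--(v) are simply restated from \cite{lu2020deep} and thus require only citation, while items (i) and (ii) admit short direct constructions. For (i), given a continuous piecewise linear $f:\R\to\R$ with breakpoints $t_1<t_2<\cdots<t_N$, I would take
\[
f(x) = a + bx + \sum_{i=1}^N c_i\,\sigma_1(x-t_i),
\]
where $a$ and $b$ match the linear piece on $(-\infty,t_1]$ and each $c_i$ equals the jump in slope at $t_i$. The affine preprocessing $x\mapsto(x-t_1,\ldots,x-t_N)$ feeds $N$ ReLU units whose linear combination (with the $a+bx$ term absorbed into the output-layer affine map) realizes $f$ exactly as a one-hidden-layer $\sigma_1$-network of width $N$.

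For (ii), the identity $x = \sigma_1(x)-\sigma_1(-x)$ applied coordinatewise immediately yields an exact realization of the $d$-dimensional identity using one hidden layer of width $2d$.

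For (iii)--(v), I would appeal directly to \cite[Lemma 5.1, Lemma 4.2, Theorem 4.1]{lu2020deep}. The key mechanisms behind those constructions are: a sawtooth/telescoping approximation of $x^2$ with geometrically decaying error $N^{-L}$ at width $3N$ and depth $L$ for (iii); the polarization identity $xy = \tfrac{1}{2}\bigl((x+y)^2 - x^2 - y^2\bigr)$, together with a rescaling of $[a,b]$ to $[0,1]$ that introduces the $(b-a)^2$ factor, for (iv); and iterating (iv) a total of $\|\xal\|_1 - 1$ times in a product tree for (v), where the stated depth $7k^2L$, width $9(N+1)+k-1$, and error $9k(N+1)^{-7kL}$ are precisely what results from controlling propagated multiplicative error. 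The main (and only) delicate point is this error-and-parameter bookkeeping in (v), but it is routine once the induction on the number of factors is set up, and it is carried out in full in the cited reference; no genuine obstacle is expected for any of the five items.
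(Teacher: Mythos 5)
Your proposal matches the paper's treatment: the paper states this lemma without proof, taking (i)--(ii) as standard facts and (iii)--(v) verbatim from \cite{lu2020deep} (Lemma 5.1, Lemma 4.2, Theorem 4.1 respectively), and your sketches of the mechanisms behind the cited results are accurate.

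One small flaw in your construction for (i): you cannot ``absorb the $a+bx$ term into the output-layer affine map.'' In the architecture used here the output layer is affine in the \emph{hidden-unit outputs} $\sigma_1(x-t_i)$, not in the input $x$, and $c_0+\sum_{i=1}^N c_i\,\sigma_1(x-t_i)$ is constant to the left of $t_1$, so a nonzero slope $b$ on the leftmost piece is not realizable this way on all of $\R$. The statement is true under the convention implicit in \cite{lu2020deep}: the function lives on a compact interval and the left endpoint is counted among the $N$ breakpoints, so that the neuron $\sigma_1(x-t_1)$ with $t_1$ the left endpoint supplies the linear term on the whole domain. (Alternatively, on $\R$ one spends two extra neurons via $bx=b\sigma_1(x)-b\sigma_1(-x)$, as you do in (ii).) This is a counting-convention point rather than a substantive gap; everything else is fine.
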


The following two propositions will also be used in proving our main results.

\begin{proposition}(Step function approximations \cite[Proposition 4.3]{lu2020deep})\label{prop:step_sigma_1}
For any $N,L,d \in \mathbb{N}^+$ and $\delta \in (0,\frac{1}{3K}]$ with $K = \lfloor {N^{1/d}} \rfloor^2 \lfloor L^{2/d} \rfloor$, there exists a one-dimensional $\sigma_1$-$\tn{NN}$ $\phi$ with width $4\lfloor N^{1/d} \rfloor+3$ and depth $4L+5$ such that
\[
\phi(x) = k,\quad \text{if} ~ x\in \big[\frac{k}{K},\frac{k+1}{K}- \delta \cdot 1_{\{ k<K-1 \}}\big]
\]
for $k=0,1,\dots,K-1$.
\end{proposition}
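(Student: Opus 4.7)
The plan is to set $M := \lfloor N^{1/d} \rfloor$ and $J := \lfloor L^{2/d} \rfloor$ so the target step count is $K = M^2 J$, and to build $\phi$ by composing small staircase modules in two tiers. First I would construct a \emph{base staircase module} $s_M$: a one-hidden-layer $\sigma_1$-\tn{NN} of width roughly $M$ that is constant on each of the $M$ good sub-intervals $[k/M, (k+1)/M - \delta_0)$ and equals $k$ there. This module is assembled from $M$ shifted ReLU ramps of the form $\sigma_1(x - k/M + \delta_0)/\delta_0 - \sigma_1(x - k/M)/\delta_0$, which together implement a cumulative staircase with narrow linear transitions of width $\delta_0$; the existence of such a module is exactly Lemma \ref{lem:NNex1}(i). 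A constant number of identity channels, built via Lemma \ref{lem:NNex1}(ii), is reserved in each hidden layer to shuttle the input $x$ and running partial outputs forward without distortion.

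Next I would boost $M$ to $M^2$ by composing two copies of the base module in series. The first copy computes the coarse index $q_1 = s_M(x)$; a linear layer then forms the residual $M x - q_1 \in [0, 1 - M\delta_0)$; the second copy computes $q_2 = s_M(M x - q_1)$; and a final linear combination returns $M q_1 + q_2 \in \{0, 1, \dots, M^2 - 1\}$. This sub-network has width bounded by $4M + 3$ (roughly $M$ ReLUs for each of two staircases plus a constant-width bookkeeping bus) and constant depth. To extract the further factor $J$, I would iterate a residual-refinement block $L$ times: each block rescales the current residual, applies a small sub-staircase, accumulates its contribution into the running integer index, and forwards an updated residual, with the per-block refinement factor tuned so that the product over $L$ blocks equals $J$. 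Laid out this way, the total architecture uses $\leq 4L$ additional layers of width $\leq 4M + 3$, so the overall width and depth bounds $4\lfloor N^{1/d}\rfloor + 3$ and $4L + 5$ drop out of layer and channel bookkeeping.

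The main obstacle will be controlling the transition strips through the composition. Each staircase module is only piecewise constant \emph{outside} narrow bad strips of width $\delta_0$, and under composition a bad strip from an earlier module can be pushed into the interior of a later module, producing spurious non-constant behavior on intervals we want to be constant. The hypothesis $\delta \leq 1/(3K)$ is precisely what yields enough slack to prevent this: at each stage one picks $\delta_0$ shrinking geometrically so that the image of every good input interval under the current module lands strictly inside a good interval of the next module, and the constant $1/3$ leaves a buffer that survives all $\mathcal{O}(L)$ compositions. Verifying by induction on composition depth that the final bad strip of each level set $\{\phi = k\}$ is contained in $\bigl[(k+1)/K - \delta,\, (k+1)/K\bigr]$, and that the "good" values land exactly at the integer $k$ rather than a perturbed value, is the essentially combinatorial but error-prone heart of the argument.
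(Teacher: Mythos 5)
First, note that the paper does not prove this proposition at all: it is imported verbatim from \cite[Proposition 4.3]{lu2020deep}, so the only thing to compare your proposal against is the construction in that reference. Your first two tiers (a one-hidden-layer staircase $s_M$ with $M=\lfloor N^{1/d}\rfloor$ steps built from shifted ReLU ramps, identity channels to carry $x$ and partial outputs forward, and the two-level composition $M q_1 + q_2$ with $q_1=s_M(x)$ and $q_2=s_M(Mx-q_1)$) are essentially the right ingredients and match the cited proof in spirit.

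The genuine gap is in your third tier, where the factor $J=\lfloor L^{2/d}\rfloor$ is supposed to come from $L$ composed ``residual-refinement blocks'' whose per-block refinement factors have product equal to $J$. An $L$-fold composition of staircases with $c_1,\dots,c_L$ steps resolves $\prod_i c_i$ intervals, and there is in general no choice of integers $c_i\ge 1$ with $\prod_i c_i=J$ that respects the width budget $4M+3$: the required per-block factor $J^{1/L}$ is not an integer, and if $J$ is prime (e.g.\ $d=2$, $M=1$, $J=97$, $L=100$) the only factorization forces one block of width $\approx J$, which violates the width bound $4M+3=7$. The mechanism in \cite{lu2020deep} is not multiplicative in the depth: the depth is used \emph{additively}. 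One writes the coarse staircase with $MJ$ steps as a sum $\sum_{\ell} g_\ell$ of one-hidden-layer staircases with at most $M$ breakpoints each (possible since $J\le L$ for $d\ge 2$; the case $d=1$ is handled analogously with $NL$ steps per level), accumulates this sum layer by layer through the identity channels, and only then composes once with a single fine staircase of $M$ steps. There are thus only $\mathcal{O}(1)$ levels of genuine composition, which is also why a fixed constant in $\delta\le\frac{1}{3K}$ suffices; your claim that geometrically shrinking tolerances $\delta_0$ let the constant $\frac13$ ``survive all $\mathcal{O}(L)$ compositions'' cannot be right, since a fixed constant cannot absorb a buffer degrading by a constant factor per level over $L$ levels. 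To repair the argument you should replace the $L$-fold composition by the additive accumulation of breakpoints across layers and redo the bad-strip bookkeeping for a two-level (not $L$-level) composition.
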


\begin{proposition}(Point matching \cite[Proposition 4.4]{lu2020deep})\label{prop:point_sigma_1}
Given any $N,L,s \in \mathbb{N}^+$ and $\xi_i \in [0,1]$ for $i=0,1,\dots,N^2L^2-1$, there exists a $\sigma_1$-$\tn{NN}$ $\phi$ with width $16s(N+1)\log_2(8N)$ and depth $(5L+2)\log_2{(4L)}$ such that
\begin{enumerate}
\item $|\phi(i)-\xi_i| \leq N^{-2s}L^{-2s},$
for $i=0,1,\cdots,N^2L^2-1$;
\item $0\leq \phi(x) \leq 1,$
$x\in \mathbb{R}$
\end{enumerate}
\end{proposition}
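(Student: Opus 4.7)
The plan is to realize a high-precision ``look-up table'' network using the standard bit-extraction paradigm, built on top of the piecewise-constant backbone of Proposition \ref{prop:step_sigma_1}. The key idea is that the $N^2L^2$ target values can be compactly encoded and then decoded using a network whose size scales as a polynomial in $N$ and $L$ rather than in the number of entries.

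First, set $M := \lceil 2s\log_2(NL)\rceil$ so that $2^{-M}\le N^{-2s}L^{-2s}$, and replace each $\xi_i$ by its $M$-bit binary truncation $\tilde\xi_i=\sum_{j=1}^{M} b_{i,j}2^{-j}$. The truncation error already fits inside the target budget, so it suffices to construct $\phi$ with $\phi(i)=\tilde\xi_i$ at each integer $i=0,\dots,N^2L^2-1$. To avoid widths of order $N^2L^2$, I would pack bits: write $i = i_1 k + i_2$ with block size $k\approx\log_2(8N)$, and for each $i_1$ concatenate the $kM$ bits of the block members into a single super-number $y_{i_1}\in[0,1)$. Realizing the map $i\mapsto y_{i_1}$ is itself a point-matching task, but with only $N^2L^2/k$ entries, each at binary-rational precision dictated by the encoding. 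Apply Proposition \ref{prop:step_sigma_1} in one dimension to obtain an address decoder of width $\OO(N)$ and depth $\OO(L)$ that maps $i$ to its block index; stacking a second step-function layer that outputs the stored super-numbers at those integer levels gives the composite lookup $\psi(i)=y_{i_1}$ within the same asymptotic budget.

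Next, attach a bit-extraction subnetwork. Given $y\in[0,1)$ encoding bits $c_1,c_2,\dots$, iterate $c_{j+1}=\mathbf{1}_{\{2y_j\ge 1\}}$ and $y_{j+1}=2y_j-c_{j+1}$ using a fixed-width $\sigma_1$-layer that realizes the threshold and residual update exactly on binary-rational inputs. Running this extraction in parallel across the $k$ possible offsets inside a block, and using a small $\sigma_1$-selector driven by $i_2$ to pick the correct one, recovers the desired $M$-bit string, which is combined with the precomputed weights $2^{-j}$ to reassemble $\tilde\xi_i$. Composing the decoder $\psi$ with the extraction cascade, and finally clipping by $x\mapsto \sigma_1(x)-\sigma_1(x-1)$, delivers $\phi$ satisfying both $\phi(i)=\tilde\xi_i$ and $0\le\phi(x)\le 1$ everywhere on $\mathbb{R}$.

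The main obstacle is the explicit bookkeeping of widths and depths that turns the sketch into the stated bounds. The width $16s(N+1)\log_2(8N)$ must absorb the $\OO(N)$-wide step-function backbone, the $k\approx\log_2(8N)$ parallel offset extractions, and the factor $s$ coming from $M=\OO(s\log(NL))$; the depth $(5L+2)\log_2(4L)$ arises from a two-level hierarchical decoding in which each level carries Proposition \ref{prop:step_sigma_1}'s $\OO(L)$ step-function depth, contributing the $\log_2 L$ factor. Balancing the block size $k$ against the number of blocks, and carefully tracking the constants out of Proposition \ref{prop:step_sigma_1} and the extraction iterates, is the crux of matching the asserted constants exactly.
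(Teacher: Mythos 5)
First, a point of reference: the paper does not prove Proposition \ref{prop:point_sigma_1} at all --- it is imported verbatim from \cite[Proposition 4.4]{lu2020deep} --- so your sketch can only be judged against the construction in that reference. Your overall strategy is the right one and matches it in kind: truncate each $\xi_i$ to $M=\lceil 2s\log_2(NL)\rceil$ bits so that the truncation error $2^{-M}\le N^{-2s}L^{-2s}$ already fits the budget, memorize the truncated values by combining a step-function address decoder with bit extraction, and clip through $x\mapsto\sigma_1(x)-\sigma_1(x-1)$ to enforce $0\le\phi\le1$ on all of $\R$. The truncation, the bit-extraction iteration (with the usual caveat, which you note, that the threshold is realized exactly only on the relevant binary rationals), and the clipping are all sound.

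The genuine gap is in the lookup step, which is precisely where the whole difficulty of the proposition lives. You reduce the problem to realizing $i\mapsto y_{i_1}$, where $y_{i_1}$ is an arbitrary real encoding the $kM$ bits of block $i_1$, and propose to do this by ``stacking a second step-function layer.'' But Proposition \ref{prop:step_sigma_1} only produces the canonical staircase $x\mapsto k$ on $[\frac{k}{K},\frac{k+1}{K}-\delta]$; it does not assign arbitrary prescribed outputs to the $K$ steps. Assigning arbitrary values to $N^2L^2/k$ points with $k\approx\log_2(8N)$ is itself the point-matching problem you set out to prove --- you have only shaved a $\log N$ factor off the number of points --- and realizing such an arbitrary piecewise-constant assignment by a shallow CPwL layer would require width of order $N^2L^2/\log_2 N$, far beyond $16s(N+1)\log_2(8N)$. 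The resolution in \cite{lu2020deep} is that the $\Theta(N^2L^2\cdot M)$ stored bits are never produced by a wide lookup: the grouping is chosen so that the number of groups is small enough to be enumerated within the width budget, while each group contains $\Theta(L^2)$ indices whose bits are stored implicitly in network parameters and unpacked \emph{across the depth} by the bit-extraction cascade (this is the same counting that underlies VC-dimension lower bounds: a width-$\OO(N)$, depth-$\OO(L)$ ReLU network can memorize $\Theta(N^2L^2)$ arbitrary bits), with the extra factors $s\log_2(8N)$ in the width and $\log_2(4L)$ in the depth absorbing the $M$ bits of precision per point. Your choice $k\approx\log_2(8N)$ places essentially the entire storage burden on the width, where it cannot fit, so the bookkeeping you defer to the end is not merely tedious --- with the proposed blocking it cannot come out to the stated bounds.
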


\section{{Proof of Our Main Results}}\label{section:main}

In this section, the proofs of our main results are given. We first define the following notion of subset of $[0,1]^d$, before presenting our main approximation results. Given any $K\in N^{+}$ and $\delta \in (0,\frac{1}{K})$, define a trifling region {$\Omega{([0,1]^d,K,\delta)}$} of $[0,1]^d$ as
    \begin{equation}\label{eqn:trifling_region}
    \Omega{([0,1]^d,K,\delta)}:= \cup_{i=1}^d \Big\{\xx =[x_1,x_2,\cdots,x_d]^T \in [0,1]^d: x_i\in \cup_{k=1}^{K-1}(\frac{k}{K}-\delta, \frac{k}{K}) \Big\}. 
    \end{equation}

    \subsection{Approximations Using ReLU Neural Networks}\label{section:ReLU}

{We first present the following Theorem \ref{thm:function_sigma_1}, which concerns the approximation result by ReLU networks outside the trifling region {$\Omega{([0,1]^d,K,\delta)}$}.}

\begin{theorem}\label{thm:function_sigma_1}
Suppose that $f \in C^s([0,1]^d)$ with an integer $s>1$ satisfies $\| \partial^{\bm{\alpha}} f  \|_{L^{\infty}([0,1]^d)} < 1$ for any $\bm{\alpha} \in \mathbb{N}^d$ with $|\xal| \leq s$. For any $N,L \in \mathbb{N}^+$, there exists a $\sigma_1$-$\tn{NN}$ $\phi$ with width $16s^{d+1}d(N+2)\log_2{(8N)}$ and depth $27s^2 (L+2)\log_2(4L)$ such that {$\|{\phi}\|_{\mathcal{W}^{1,\infty}([0,1]^d)} \leq 432s^d$ and }
\[
\| f - \phi \|_{\mathcal{W}^{1,\infty}([0,1]^d \backslash {\Omega{([0,1]^d,K,\delta)}})} \leq   84(s+1)^d 8^{s}N^{-2(s-1)/d}L^{-2(s-1)/d},
\]
where $K = \lfloor N^{1/d} \rfloor^2 \lfloor L^{2/d} \rfloor$ and $0 < \delta \leq \frac{1}{3K}$.
\end{theorem}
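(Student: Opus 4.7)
The plan is to combine a piecewise Taylor approximation of $f$ on a uniform grid with the ReLU building blocks provided by Lemma~\ref{lem:NNex1} and Propositions~\ref{prop:step_sigma_1}--\ref{prop:point_sigma_1}. First I would partition $[0,1]^d$ into $K^d$ cubes $Q_\beta$ of side $1/K$, where $K=\lfloor N^{1/d}\rfloor^2\lfloor L^{2/d}\rfloor$, indexed by $\beta\in\{0,\dots,K-1\}^d$ with anchors $x_\beta=\beta/K$. On each $Q_\beta$ I approximate $f$ by its order-$(s-1)$ Taylor polynomial
\[
P_\beta(x)\;=\;\sum_{|\bm{\alpha}|\le s-1}\frac{D^{\bm{\alpha}} f(x_\beta)}{\bm{\alpha}!}\,(x-x_\beta)^{\bm{\alpha}}.
\]
The integral form of the Taylor remainder together with $\|D^{\bm{\alpha}} f\|_{L^\infty}<1$ for $|\bm{\alpha}|\le s$ gives $\|f-P_\beta\|_{\mathcal{W}^{1,\infty}(Q_\beta)}\le C(s,d)\,K^{-(s-1)}$, and since $\lfloor N^{1/d}\rfloor\ge N^{1/d}/2$ one has $K^{-(s-1)}\le 8^{s} N^{-2(s-1)/d}L^{-2(s-1)/d}$. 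Hence the piecewise Taylor object already delivers the target rate outside the trifling region, and the remaining task is to realize it as a single ReLU network of the prescribed size while propagating the error through every stage.

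Next I would construct $\phi$ from three subnetworks composed in parallel and then sequentially. The first, obtained by applying Proposition~\ref{prop:step_sigma_1} coordinatewise, is a $\sigma_1$-$\tn{NN}$ map $\psi:[0,1]^d\to\{0,\dots,K-1\}^d$ that equals $\beta$ exactly on $Q_\beta\setminus\Omega([0,1]^d,K,\delta)$; its Jacobian vanishes on the good region and hence will not contribute to $\nabla\phi$. The second, obtained from Proposition~\ref{prop:point_sigma_1} after collapsing the $d$-dimensional index $\beta$ into an integer in $\{0,\dots,K^d-1\}$, produces for every multi-index $\bm{\alpha}$ with $|\bm{\alpha}|\le s-1$ an output $\widehat{c}_\beta^{\,\bm{\alpha}}$ approximating $D^{\bm{\alpha}} f(x_\beta)/\bm{\alpha}!$ to within $N^{-2s}L^{-2s}$. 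The third, based on Lemma~\ref{lem:NNex1}(v), furnishes ReLU approximations $\widetilde{m}_{\bm{\alpha}}(x-x_\beta)\approx (x-x_\beta)^{\bm{\alpha}}$, which are multiplied by the corresponding coefficient via Lemma~\ref{lem:NNex1}(iv) and summed over the $\binom{s-1+d}{d}\le (s+1)^d$ allowed multi-indices. Careful bookkeeping of the widths and depths of these $O(s^d)$ parallel branches then matches the announced $16s^{d+1}d(N+2)\log_2(8N)$ width and $27s^2(L+2)\log_2(4L)$ depth.

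The error analysis splits into value and gradient parts. On each good subcube the triangle inequality gives $|f(x)-\phi(x)|\le|f(x)-P_\beta(x)|+|P_\beta(x)-\phi(x)|$, where the first term is $O(K^{-s})$ and the second is a sum over $\bm{\alpha}$ of three elementary errors (coefficient, monomial, and bilinear product), each tunable through the auxiliary network parameters so as to be dominated by $N^{-2(s-1)/d}L^{-2(s-1)/d}$. For the gradient, the fact that $\psi$ is locally constant gives
\[
\nabla\phi(x)\;=\;\sum_{|\bm{\alpha}|\le s-1}\widehat{c}_\beta^{\,\bm{\alpha}}\,\nabla\bigl[\widetilde{m}_{\bm{\alpha}}(x-x_\beta)\bigr]\quad\text{on}~Q_\beta\setminus\Omega,
\]
which is compared directly to $\nabla P_\beta(x)=\sum_{\bm{\alpha}}\bigl(D^{\bm{\alpha}} f(x_\beta)/\bm{\alpha}!\bigr)\nabla(x-x_\beta)^{\bm{\alpha}}$. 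The residual then reduces to a coefficient mismatch plus a $\mathcal{W}^{1,\infty}$ error of the monomial and product approximations.

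The main obstacle is precisely this $\mathcal{W}^{1,\infty}$ control of the monomial approximation in Lemma~\ref{lem:NNex1}(v), which is stated only in $L^\infty$. To handle it I would reopen the Lu et al.\ construction, in which each monomial is assembled from iterated squarings and bilinear products whose $L^\infty$ error drops as $(N+1)^{-7kL}$, and upgrade the bound to $\mathcal{W}^{1,\infty}$ by propagating the uniform Lipschitz constants of each intermediate layer and using the square-and-multiply identity to convert small $L^\infty$ errors into small gradient errors, at the price of only a polynomial loss in $N$ and $L$ that is then reabsorbed by tightening the internal parameters. Aggregating the $(s+1)^d$ multi-indices with the $8^s$ growth from the Taylor diameter factor yields the prefactor $84(s+1)^d 8^s$, and a uniform pointwise bound on every component delivers the auxiliary $\|\phi\|_{\mathcal{W}^{1,\infty}}\le 432 s^d$ estimate.
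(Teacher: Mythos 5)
Your proposal follows essentially the same route as the paper: partition via the trifling region, a locally constant anchor map from Proposition~\ref{prop:step_sigma_1}, Taylor coefficients encoded by the point-matching network of Proposition~\ref{prop:point_sigma_1}, monomials and products realized by ReLU subnetworks, and an error split into the Taylor remainder plus the coefficient/monomial/product network errors. You also correctly identify the one genuine technical hurdle --- upgrading the $L^\infty$ bounds of Lemma~\ref{lem:NNex1} to $\mathcal{W}^{1,\infty}$ by propagating derivative bounds through the iterated squaring and product construction --- which is exactly what the paper carries out in Lemmas~\ref{lem:square_sigma1_wnorm}--\ref{lem:product_sigma1_wnorm} and Proposition~\ref{prop:poly_sigma_1}, so the proposal is sound.
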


{We will now show our main result, Theorem \ref{coro:function_sigma_1}, provided Theorem \ref{thm:function_sigma_1} holds true. The proof of Theorem \ref{thm:function_sigma_1} will be given in Section \ref{subsect:proof_thm_sigma_1}.}
{
\begin{proof}[Proof of Theorem \ref{coro:function_sigma_1}]
When $f$ is a constant function, the statement is trivial.
By Theorem \ref{thm:function_sigma_1}, there exists a $\sigma_1$-$\tn{NN}$ $\phi$ with width $16s^{d+1}d(N+2)\log_2{(8N)}$ and depth $27s^2 (L+2)\log_2(4L)$ such that $\|{\phi}\|_{\mathcal{W}^{1,\infty}([0,1]^d)} \leq 432s^d$ and
\[
\| f - \phi \|_{\mathcal{W}^{1,\infty}([0,1]^d \backslash \Omega{([0,1]^d,K,\delta)})} \leq   84(s+1)^d 8^{s}N^{-2(s-1)/d}L^{-2(s-1)/d}.
\]
Now, we set $K=\lfloor N^{1/d} \rfloor^2 \lfloor L^{2/d}\rfloor$ and choose a small $\delta$ such that
\[
Kd\delta 432^p \leq (N^{-2(s-1)/d}L^{-2(s-1)/d})^p.
\]
Then, we have

\begin{align*}
\| f - \phi \|^p_{\mathcal{W}^{1,p}([0,1]^d)} &= \| f - \phi \|^p_{\mathcal{W}^{1,p}(\Omega{([0,1]^d,K,\delta)})} +   \| f - \phi \|^p_{\mathcal{W}^{1,p}([0,1]^d \backslash \Omega{([0,1]^d,K,\delta)})}\\
&\leq  Kd\delta(432s^d)^p +  (84(s+1)^d 8^{s}N^{-2(s-1)/d}L^{-2(s-1)/d})^p\\
&\leq  \big(s^d N^{-2(s-1)/d}L^{-2(s-1)/d}\big)^p +  \big(84(s+1)^d 8^{s}N^{-2(s-1)/d}L^{-2(s-1)/d}\big)^p\\
&\leq  (85(s+1)^d 8^{s}N^{-2(s-1)/d}L^{-2(s-1)/d})^p.
\end{align*}
Hence, we have
\[
\| f - \phi \|_{\mathcal{W}^{1,p}([0,1]^d)} \leq 85(s+1)^d 8^{s}N^{-2(s-1)/d}L^{-2(s-1)/d}.
\]
\qed
\end{proof}
}

Before showing {Theorem \ref{thm:function_sigma_1}} directly, we present the following lemmas which will be helpful to understand the different steps illustrated in the proof. Similar to showing \cite[Theorem 2.2]{lu2020deep} where only the $L^{\infty}$ norm is considered, in order to prove Theorem \ref{thm:function_sigma_1} we will first construct ReLU networks to approximate multivariate polynomials and provide an error bound measured in the $\mathcal{W}^{1,\infty}$ norm that is explicitly characterized in terms of the layer and depth of the underlying ReLU networks, via the following steps:

\begin{enumerate}
\item We approximate $f(x)=x^2$ by the compositions of the ``sawtooth functions'', which was first proposed in \cite{Yarotsky2017}.

\item We approximate $f(x,y)=xy$ using the ReLU network constructed in the previous step based on the identity $xy = 2\Big( \big( \frac{|x+y|}{2} \big)^2 - \big( \frac{|x|}{2} \big)^2 -\big( \frac{|y|}{2} \big)^2  \Big)$.

\item We approximate $f(x_1,x_2,\cdots,x_k)=x_1 x_2 \cdots x_k$ for $k \geq 2$ repeatedly using the ReLU networks constructed in the previous step.

\item We approximate a general polynomial $\bm{x}^{\bm{\alpha}}=x_1^{\alpha_1} x_2^{\alpha_2} \cdots x_d^{\alpha_d}$ for $\bm{\alpha} \in \mathbb{N}^d$ with $|\xal| \leq k \in\mathbb{N}^+ $. Since any one term polynomial of degree less than or equal to $k$ can be written as $Cz_1 z_2 \cdots z_k$, where $C$ is a constant, we can obtain an error bound based on the previous approximation result.
\end{enumerate}

We begin with giving an approximation result for the simple function of $x^2$.

\begin{lemma}\label{lem:square_sigma1_wnorm}
For any $N,L\in \mathbb{N}^{+}$, there exists a $\sigma_1$-$\tn{NN}$ $\phi$ with width $3N$ and depth $2L$ such that {$\| \phi(x) \|_{\mathcal{W}^{1,\infty}([0,1])} \leq 2$ and} 
\begin{equation*}
||\phi(x)-x^2||_{\mathcal{W}^{1,\infty}((0,1))} \leq N^{-L}.
\end{equation*}
\end{lemma}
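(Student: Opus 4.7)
My plan is to reuse the Yarotsky-style sawtooth construction that underlies Lemma~\ref{lem:NNex1}(iii) from \cite{lu2020deep}. Specifically, I would realize $\phi$ as the continuous piecewise-linear interpolant of $x^{2}$ at the equispaced nodes $k/K$, $k=0,1,\ldots,K$, for some $K\geq N^{L}$. Such an interpolant can be implemented by a $\sigma_{1}$-NN of width $3N$ and depth $2L$: one composes $L$ copies of a shallow, width-$3N$ ``$N$-tooth'' block (each block multiplies the number of breakpoints by $N$) and combines the iterates via the standard scaling identity $\phi(x)=x-\sum_{j=1}^{L}g_{j}(x)/c_{j}$ adapted to base $N$, which is exactly what is done in the proof of \cite[Lemma 5.1]{lu2020deep}; the allowance of depth $2L$ rather than $L$ leaves room for the accumulator producing the linear combination to be carried through the network.

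Given the interpolation structure, both halves of the $\mathcal{W}^{1,\infty}$ bound follow from one-dimensional calculus. On each subinterval $[k/K,(k+1)/K]$ the interpolant has slope $(2k+1)/K$, so $|\phi'(x)-2x|=|(2k+1)/K-2x|\leq 1/K\leq N^{-L}$ in the interior; and the standard linear-interpolation error for $x^{2}$ on an interval of length $h=1/K$ equals $h^{2}/4$, giving $|\phi(x)-x^{2}|\leq 1/(4K^{2})\leq N^{-2L}\leq N^{-L}$. Taking the maximum yields $\|\phi-x^{2}\|_{\mathcal{W}^{1,\infty}((0,1))}\leq N^{-L}$. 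For the size bound, $\|\phi\|_{L^{\infty}([0,1])}\leq \|x^{2}\|_{L^{\infty}}+N^{-L}\leq 2$ and $\|\phi'\|_{L^{\infty}}=\max_{0\leq k<K}(2k+1)/K<2$, so $\|\phi\|_{\mathcal{W}^{1,\infty}([0,1])}\leq 2$.

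The only point requiring care is the structural claim in the first paragraph: verifying that the sawtooth construction, at width $3N$ and depth $2L$, genuinely produces an equispaced piecewise-linear interpolant on a grid of spacing at most $N^{-L}$. This is a property of the construction inside \cite[Lemma 5.1]{lu2020deep} rather than of its statement, so a brief inspection of that proof is needed; the key fact being used is that when one approximates $x^{2}$ by the Yarotsky scheme the resulting network is exact at the dyadic (here base-$N$) grid, not merely close to $x^{2}$ uniformly. Once that is in hand, everything else reduces to the two standard linear-interpolation error formulas above.
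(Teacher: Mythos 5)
Your proposal is correct and takes essentially the same route as the paper: the paper's proof also realizes $\phi$ as the \emph{exact} piecewise-linear interpolant of $x^2$ produced by the sawtooth construction in the proof of \cite[Lemma 5.1]{lu2020deep} (on the dyadic grid of spacing $2^{-2Lk}\le N^{-L}$, where $k$ satisfies $(k-1)2^{k-1}+1\le N\le k2^k$), and then bounds the derivative error by the grid spacing and the slopes by $(2j+1)2^{-s}<2$, exactly as you do. Your structural caveat is resolved just as you anticipate---the depth-$2L$ network \emph{is} the interpolant, exact at the grid points---and the only imprecision in your sketch (each width-$3N$ block multiplies the breakpoint count by $2^{k}\ge\sqrt{N}$, not by $N$) is precisely what the doubling of the depth from $L$ to $2L$ compensates for.
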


\begin{proof}
As in the proof of Lemma \ref{lem:NNex1} (iii), we begin by defining the piecewise linear function $f_s:[0,1] \to [0,1]$ for $s \in \mathbb{N}^+$, $s=1,2,\dots$, satisfying the following properties  

\begin{enumerate}
\item $f_s(x)=x^2$ at a set of break points $\big\{\frac{j}{2^s}:j=0,1,2,\dots,2^s \big\}$.
\item $f_s(x)$ is linear between any two adjacent break points.
 \end{enumerate}
{ For any integer $N$, let $k \in \mathbb{N}^+$ be the unique number such that $(k-1)2^{k-1}+1 \leq N \leq k2^k$. For such an $N$, $k$, and any $L'$, let $s=L'k$.} It is shown in the proof of {\cite[Lemma 5.1]{lu2020deep}} that there exists a $\sigma_1$-$\tn{NN}$ $\phi(x){=f_s(x)=f_{L'k}(x)}$ with width $3N$ and depth $L'$ such that 
\begin{equation}\label{eqn:num0}
\|\phi(x)-x^2\|_{L^{\infty}([0,1])} { =} \|f_{L'k}-x^2\|_{L^{\infty}([0,1])} \leq 2^{-2(L'k+1)} \leq 2^{-2L'k} \leq N^{-L'}.
\end{equation}

We now show that the approximation error of the first order (weak) derivative can be measured in a similar fashion.

Note that for all $x\in \big(\frac{j}{2^s}, \frac{j+1}{2^s}\big)$,
\begin{equation}\label{eqn:num1}
\phi(x)=\Big(\frac{(j+1)^2}{2^s}-\frac{j^2}{2^s}\Big)\Big(x-\frac{j}{2^s}\Big)+\Big(\frac{j}{2^s}\Big)^2.
\end{equation}

From (\ref{eqn:num1}), we have for each $j=0,1,\dots,2^s-1$,
\begin{eqnarray}\nonumber
|\phi(x)-x^2|_{\mathcal{W}^{1,\infty} \big( \big(\frac{j}{2^s}, \frac{j+1}{2^s} \big)\big)}&=&\Big\|\frac{(j+1)^2}{2^s}-\frac{j^2}{2^s}-2x\Big\|_{L^{\infty} \big(\big(\frac{j}{2^s}, \frac{j+1}{2^s} \big)\big)}\\\nonumber
&=& \Big\| \frac{2j+1}{2^s} -2x \Big\|_{L^{\infty}\big(\big(\frac{j}{2^s}, \frac{j+1}{2^s}\big)\big)}\\\nonumber
&=& \max \Big\{ \big|\frac{2j+1}{2^s} - 2\big(\frac{j}{2^s}\big)\big|,\big|\frac{2j+1}{2^s} -2\big(\frac{j+1}{2^s}\big) \big| \Big\}\\\nonumber
&=&2^{-s}\\\nonumber
&=&2^{-L'k}.
\end{eqnarray}

Clearly, from (\ref{eqn:num0}) \& (\ref{eqn:num1}) we have
\begin{equation*}
\|\phi(x)-x^2\|_{\mathcal{W}^{1,\infty}((0,1))} { =}  \|f_{L'k}-x^2\|_{\mathcal{W}^{1,\infty}((0,1))} \leq 2^{-L'k} \leq N^{-L'/2}.
\end{equation*}
{
Finally, we have
\begin{eqnarray}\nonumber
\| \phi(x) \|_{\mathcal{W}^{1,\infty}([0,1])} \leq  \Big\| \frac{ (j+1)^2 - j^2 }{2^s} \Big\|_{L^{\infty}\big(\big(\frac{j}{2^s}, \frac{j+1}{2^s}\big)\big)} =\frac{2j+1}{2^s} &\leq&\frac{2(2^s-1)+1}{2^s} \\\nonumber
&=& 2 -\frac{1}{2^{L'k}}\\\nonumber
&\leq& 2.
\end{eqnarray}
Setting $L'=2L$, we have the desired $\phi(x)$ and hence the proof is finished.
}
\qed

\end{proof}

\begin{lemma}\label{lem:xy_sigma1_wnorm}
For any $N,L\in \mathbb{N}^{+}$, there exists a $\sigma_1$-$\tn{NN}$ $\phi$ with width $9N$ and depth $2L$ such that {$\|\phi\|_{\mathcal{W}^{1,\infty}((0,1)^2)} \leq 12$ and }
\begin{equation*}
\|\phi(x,y)-xy\|_{\mathcal{W}^{1,\infty}((0,1)^2)} \leq  6N^{-L}.
\end{equation*}
\end{lemma}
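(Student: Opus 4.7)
The plan is to construct $\phi$ by combining three parallel copies of the squaring network from Lemma \ref{lem:square_sigma1_wnorm} through the polarization identity
\[
xy \;=\; 2\!\left[\Big(\tfrac{x+y}{2}\Big)^{\!2} - \Big(\tfrac{x}{2}\Big)^{\!2} - \Big(\tfrac{y}{2}\Big)^{\!2}\right],
\]
which holds on $(0,1)^2$ without needing absolute values since $x,y,x+y\ge 0$ there and all three arguments $\tfrac{x+y}{2},\tfrac{x}{2},\tfrac{y}{2}$ lie in $[0,1]$. Let $\psi$ be the $\sigma_1$-NN of width $3N$ and depth $2L$ produced by Lemma \ref{lem:square_sigma1_wnorm}, so that $\|\psi(u)-u^2\|_{\mathcal{W}^{1,\infty}((0,1))}\le N^{-L}$ and $\|\psi\|_{\mathcal{W}^{1,\infty}([0,1])}\le 2$. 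I then define
\[
\phi(x,y) \;:=\; 2\,\psi\!\Big(\tfrac{x+y}{2}\Big) \;-\; 2\,\psi\!\Big(\tfrac{x}{2}\Big) \;-\; 2\,\psi\!\Big(\tfrac{y}{2}\Big).
\]

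Architecturally, the three affine preprocessings $\tfrac{x+y}{2}$, $\tfrac{x}{2}$, $\tfrac{y}{2}$ can be absorbed into the first linear layer of three independent copies of $\psi$ stacked in parallel; the final scalar combination $2(\cdot)-2(\cdot)-2(\cdot)$ is absorbed into the last linear layer. Parallelization multiplies the width by three, giving width $9N$, while the depth remains $2L$, matching the claimed architectural bounds.

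Next, I would verify the error in $\mathcal{W}^{1,\infty}((0,1)^2)$ separately for the value and for each partial derivative. For the value, the triangle inequality gives
\[
|\phi(x,y)-xy| \;\le\; 2\sum_{u\in\{(x+y)/2,\,x/2,\,y/2\}} |\psi(u)-u^2| \;\le\; 2\cdot 3\cdot N^{-L} \;=\; 6N^{-L}.
\]
For the derivative, since $\partial_x\phi = \psi'\!\big(\tfrac{x+y}{2}\big) - \psi'\!\big(\tfrac{x}{2}\big)$ and $\tfrac{d}{du}u^2\big|_{u=(x+y)/2}-\tfrac{d}{du}u^2\big|_{u=x/2} = (x+y)-x = y = \partial_x(xy)$, two applications of the $\mathcal{W}^{1,\infty}$ bound on $\psi$ yield $|\partial_x\phi-y|\le 2N^{-L}$, and symmetrically for $\partial_y$. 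Taking the maximum produces the advertised $\mathcal{W}^{1,\infty}$ error $\le 6N^{-L}$. Finally, $|\phi|\le 2(|\psi|+|\psi|+|\psi|)\le 12$ and $|\partial_x\phi|,|\partial_y\phi|\le 2\|\psi'\|_{L^\infty}\le 4$, so $\|\phi\|_{\mathcal{W}^{1,\infty}((0,1)^2)}\le 12$.

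The only subtle point, and what I would treat as the main obstacle, is the derivative bound: naively one might fear losing control because $\psi$ is only a continuous piecewise-linear function whose derivative is a step function, so $\partial_x\phi$ is also a step function that is a priori a poor match for the continuous target $y$. The resolution, as above, is that the polarization identity lines up the two evaluations of $\psi'$ in just such a way that their difference tracks the \emph{difference} of the exact derivatives of $u^2$; this is where the $\mathcal{W}^{1,\infty}$ bound on the approximation of $u^2$, rather than merely an $L^\infty$ bound, is essential, and is why the lemma cannot be deduced from the $L^\infty$ statement of Lemma \ref{lem:NNex1}(iv) alone.
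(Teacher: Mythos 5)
Your construction is the same as the paper's: the polarization identity $xy=2\bigl(((x+y)/2)^2-(x/2)^2-(y/2)^2\bigr)$ applied to three parallel copies of the squaring network of Lemma \ref{lem:square_sigma1_wnorm}, followed by the triangle inequality over the three terms. The only cosmetic differences are that you drop the absolute values (harmless on $(0,1)^2$ where all arguments are nonnegative) and spell out the chain-rule computation for the partial derivatives that the paper leaves implicit in its termwise $\mathcal{W}^{1,\infty}$ bounds; both versions are correct.
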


\begin{proof}
By Lemma \ref{lem:square_sigma1_wnorm}, there exists a $\sigma_1$-$\tn{NN}$ $\psi$ with width $3N$ and depth $2L$ such that {$\|\psi\|_{\mathcal{W}^{1,\infty}((0,1))} \leq 2$ and }
\[
\|z^2-\psi(z)\|_{\mathcal{W}^{1,\infty}((0,1))} \leq N^{-L}.
\]

Combining the above inequality and the fact that for any $x,y \in \mathbb{R}$
\[
xy = 2\Big( \big( \frac{|x+y|}{2} \big)^2 - \big( \frac{|x|}{2} \big)^2 -\big( \frac{|y|}{2} \big)^2  \Big),
\]
we construct the following network $\phi$
\[
\phi(x,y )= 2\Big(\psi\big( \frac{|x+y|}{2}\big) - \psi\big(\frac{|x|}{2}\big) - \psi \big(\frac{|y|}{2} \big)\Big).
\]

We have
\begin{eqnarray}\nonumber
&&\| \phi(x,y)- xy \|_{\mathcal{W}^{1,\infty}((0,1)^2)} \\\nonumber
&\leq&  2 \big\|\psi\big( \frac{|x+y|}{2}\big) - ( \frac{|x+y|}{2} )^2 \big\|_{\mathcal{W}^{1,\infty}((0,1)^2)}  + 2 \big\|\psi\big( \frac{|x|}{2}\big) - ( \frac{|x|}{2} )^2 \big\|_{\mathcal{W}^{1,\infty}((0,1)^2)} \\\nonumber
& &+ 2 \big\|\psi\big( \frac{|y|}{2}\big) - ( \frac{|y|}{2} )^2 \big\|_{\mathcal{W}^{1,\infty}((0,1)^2)} \\\nonumber
&\leq&  2N^{-L} + 2N^{-L} + 2N^{-L} \\\nonumber
&=& 6N^{-L}.
\end{eqnarray}
{
Finally, we have
\begin{eqnarray}\nonumber
\| \phi(x,y)\|_{\mathcal{W}^{1,\infty}((0,1)^2)} &=& \Big\| 2\Big(\psi\big( \frac{|x+y|}{2}\big) - \psi\big(\frac{|x|}{2}\big) - \psi \big(\frac{|y|}{2} \big)\Big) \Big\|_{\mathcal{W}^{1,\infty}((0,1)^2)} \\\nonumber
&\leq& 12.
\end{eqnarray}
}
\qed
\end{proof}

By rescaling, we have the following modification of Lemma \ref{lem:xy_sigma1_wnorm}.

\begin{lemma}\label{lem:xy_sigma1_wnorm_mod}
For any $N,L\in \mathbb{N}^{+}$ and $a,b \in \mathbb{R}$ with $a<b$, there exists a $\sigma_1$-$\tn{NN}$ $\phi$ with width $9N+1$ and depth $2L$ such that {$\|\phi\|_{\mathcal{W}^{1,\infty}((a,b)^2)} \leq 12(b-a)^2$ and }
\begin{equation*}
\|\phi(x,y)-xy\|_{\mathcal{W}^{1,\infty}((a,b)^2)} \leq  6(b-a)^2N^{-L}.
\end{equation*}
\end{lemma}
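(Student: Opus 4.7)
The plan is to reduce this lemma to Lemma \ref{lem:xy_sigma1_wnorm} by an affine change of variables that pulls $(a,b)^2$ back to $(0,1)^2$. Setting $\tilde{x} = (x-a)/(b-a)$ and $\tilde{y} = (y-a)/(b-a)$, one immediately verifies the algebraic identity
\[
xy = (b-a)^2 \tilde{x}\tilde{y} + ax + ay - a^2,
\]
so that approximating $xy$ on $(a,b)^2$ reduces to approximating $\tilde{x}\tilde{y}$ on $(0,1)^2$ together with an exact affine correction.

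First I would invoke Lemma \ref{lem:xy_sigma1_wnorm} to obtain a $\sigma_1$-NN $\psi$ of width $9N$ and depth $2L$ satisfying $\|\psi - \tilde{x}\tilde{y}\|_{\mathcal{W}^{1,\infty}((0,1)^2)} \leq 6N^{-L}$ and $\|\psi\|_{\mathcal{W}^{1,\infty}((0,1)^2)} \leq 12$, and then set
\[
\phi(x,y) := (b-a)^2 \psi\!\left(\tfrac{x-a}{b-a},\ \tfrac{y-a}{b-a}\right) + (ax + ay - a^2).
\]
The input affine map $(x,y) \mapsto (\tilde{x},\tilde{y})$ is absorbed into the weights and biases of the first hidden layer of $\psi$; the output scaling by $(b-a)^2$ is absorbed into the final affine layer; and only the pre-activation quantity $ax + ay$ must be routed past the ReLU activations all the way to the output.

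The main bookkeeping step, and the reason for the extra neuron in the width, is to route this linear correction through all hidden layers using exactly one additional neuron per layer. I would accomplish this by the standard shift-and-subtract trick: pick a constant $C$ large enough so that $ax + ay + C \geq 0$ on $(a,b)^2$, set one extra neuron in the first hidden layer to compute $\sigma_1(ax + ay + C) = ax + ay + C$, then propagate this scalar unchanged through each subsequent hidden layer via $\sigma_1$ applied to a non-negative value, and finally subtract $C + a^2$ at the output affine layer. This preserves depth $2L$ and costs exactly one neuron per hidden layer, giving total width $9N+1$.

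Finally, to verify the error estimate I would write $g := \psi - \tilde{x}\tilde{y}$ so that $\phi(x,y) - xy = (b-a)^2 g(\tilde{x},\tilde{y})$; the $L^\infty$ part of the $\mathcal{W}^{1,\infty}$ norm follows immediately, while the partial derivatives are handled by the chain rule $\partial_x[\phi - xy] = (b-a)\,\partial_{\tilde{x}} g(\tilde{x},\tilde{y})$ using $\|g\|_{\mathcal{W}^{1,\infty}((0,1)^2)} \leq 6N^{-L}$. The bound on $\|\phi\|_{\mathcal{W}^{1,\infty}((a,b)^2)}$ is obtained analogously by combining $(b-a)^2\|\psi\|_{\mathcal{W}^{1,\infty}((0,1)^2)} \leq 12(b-a)^2$ with the contribution of the affine correction term. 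I do not anticipate any genuine mathematical obstacle here; the only care needed is in the neuron-counting for the affine correction and in tracking the chain-rule prefactors.
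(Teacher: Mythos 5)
Your proposal is correct and follows essentially the same route as the paper: rescale to $(0,1)^2$, invoke Lemma \ref{lem:xy_sigma1_wnorm}, and add the exact affine correction $ax+ay-a^2$, with the error bound inherited under the change of variables. In fact, your shift-and-subtract routing of the linear correction through one extra neuron per hidden layer is more careful than the paper's own justification of the width $9N+1$, which merely asserts that $a(x-a)+a(y-a)$ is positive.
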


\begin{proof}
By Lemma \ref{lem:xy_sigma1_wnorm}, there exists a $\sigma_1$-$\tn{NN}$ $\psi$ with width $9N$ and depth $2L$ such that {$\|\psi\|_{\mathcal{W}^{1,\infty}((0,1)^2)} \leq 12$ and }
\begin{equation*}
\|\psi(\widetilde{x},\widetilde{y})-\widetilde{x}\widetilde{y}\|_{\mathcal{W}^{1,\infty}((0,1)^2)} \leq  6N^{-L}.
\end{equation*}

By setting $x=a+(b-a)\widetilde{x}$ and $y=a+(b-a)\widetilde{y}$ for any $\widetilde{x}, \widetilde{y} \in (0,1)$, we define the following network $\phi$
\[
\phi(x,y)=(b-a)^2\psi \big(\frac{x-a}{b-a}, \frac{y-a}{b-a} \big)+a(x-a)+ a(y-a)+a^2.
\]
{Note that $a(x-a)+ a(y-a)$ is positive. Hence, the width of $\phi$ can be as small as $9N+1$.} Thus, by $xy=(b-a)^2(\frac{x-a}{b-a} \cdot \frac{y-a}{b-a} ) + a(x-a)+ a(y-a)+a^2$, we have
\begin{eqnarray}\nonumber
\|\phi(x,y)-xy\|_{\mathcal{W}^{1,\infty}((a,b)^2)} &=& (b-a)^2 \big\| \psi \big(\frac{x-a}{b-a}, \frac{y-a}{b-a} \big) - \big(\frac{x-a}{b-a}\cdot \frac{y-a}{b-a}\big)  \big\|_{\mathcal{W}^{1,\infty}((a,b)^2)}  \\\nonumber
&\leq& 6(b-a)^2N^{-L}.
\end{eqnarray}

{
Finally, we have
\begin{eqnarray}\nonumber
\| \phi(x,y)\|_{\mathcal{W}^{1,\infty}((a,b)^2)} &=& \Big\| (b-a)^2\psi \big(\frac{x-a}{b-a}, \frac{y-a}{b-a} \big)+a(x-a)+ a(y-a)+a^2 \Big\|_{\mathcal{W}^{1,\infty}((a,b)^2)} \\\nonumber
&\leq& 12(b-a)^2.
\end{eqnarray}
}
\qed
\end{proof}

\begin{lemma}\label{lem:product_sigma1_wnorm}
For any $N,L, k \in \mathbb{N}^{+}$ with $ k \geq 2 $, there exists a $\sigma_1$-$\tn{NN}$ $\phi$ with width $9(N+1)+k-1$ and depth $14k(k-1)L$ such that {$\|\phi\|_{\mathcal{W}^{1,\infty}((0,1)^k)} \leq 18$ and }
\begin{equation*}
\|\phi(\bm{x})-x_1x_2\cdots x_k\|_{\mathcal{W}^{1,\infty}((0,1)^k)} \leq  10(k-1)(N+1)^{-7kL}.
\end{equation*}
\end{lemma}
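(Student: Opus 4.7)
The plan is to prove the statement by induction on $k \geq 2$, introducing one new factor at each step via the bivariate multiplication network guaranteed by Lemma \ref{lem:xy_sigma1_wnorm_mod}. The base case $k=2$ follows directly from Lemma \ref{lem:xy_sigma1_wnorm_mod} with $(a,b)=(0,1)$, after replacing the width parameter $N$ by $N+1$ and the depth parameter $L$ by $7kL=14L$: this yields a network of width $9(N+1)+1 \leq 9(N+1)+(k-1)$, depth $2(7kL)=14k(k-1)L$, Sobolev error at most $6(N+1)^{-14L} \leq 10(k-1)(N+1)^{-7kL}$, and $\mathcal{W}^{1,\infty}$-norm at most $12 \leq 18$.

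For the inductive step, assume $\phi_{k-1}$ satisfies the claim for $k-1$. I would define
\[
\phi_k(\bm{x}) := \psi\bigl(\phi_{k-1}(x_1,\ldots,x_{k-1}),\, x_k\bigr),
\]
where $\psi$ is the multiplication network from Lemma \ref{lem:xy_sigma1_wnorm_mod}, applied on a slightly enlarged interval $(a,b)$ containing both $[0,1]$ and the image of $\phi_{k-1}$ (which deviates from $[0,1]$ only by $\mathcal{O}((N+1)^{-7(k-1)L})$), with the depth parameter tuned so that $\psi$ has depth $14kL$ and bivariate Sobolev error $\mathcal{O}((N+1)^{-7kL})$. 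The variable $x_k$ is routed through the subnetwork computing $\phi_{k-1}$ via a single ReLU identity channel (valid since $x_k \geq 0$), so the width at any layer is at most $(9(N+1)+(k-2))+1 = 9(N+1)+(k-1)$, and the total depth is $14(k-1)(k-2)L+14kL \leq 14k(k-1)L$, since $(k-1)(k-2)+k = k^2-2k+2 \leq k(k-1)$ for $k \geq 2$.

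The error analysis proceeds via the chain rule. For the function value, the triangle inequality gives
\[
|\phi_k - x_1\cdots x_k| \leq |\psi(\phi_{k-1}, x_k) - \phi_{k-1} x_k| + |x_k|\,|\phi_{k-1} - x_1\cdots x_{k-1}|.
\]
For each partial derivative with $i<k$, writing $P_{k-1}:=x_1\cdots x_{k-1}$ and $P_k:=P_{k-1}x_k$,
\[
\partial_i \phi_k - \partial_i P_k = \partial_1 \psi\bigl(\partial_i \phi_{k-1} - \partial_i P_{k-1}\bigr) + \bigl(\partial_1 \psi - x_k\bigr)\,\partial_i P_{k-1},
\]
with an analogous identity for $\partial_k \phi_k - \partial_k P_k = \partial_2 \psi - P_{k-1}$. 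Using the sharper bound $|\partial_1 \psi| \leq |x_k| + \|\psi(u,v)-uv\|_{\mathcal{W}^{1,\infty}} \leq 1+\epsilon$ with $\epsilon = \mathcal{O}((N+1)^{-7kL})$, together with $|\partial_i P_{k-1}| \leq 1$, yields the recursion $E_k \leq (1+\epsilon)E_{k-1}+\epsilon$ for the Sobolev error $E_k$, which telescopes to the stated bound $E_k \leq 10(k-1)(N+1)^{-7kL}$ after suitable constants are absorbed. The norm bound $\|\phi_k\|_{\mathcal{W}^{1,\infty}} \leq 18$ then follows from $|\phi_k| \leq 1+E_k$ and $|\partial_i \phi_k| \leq 1+E_k$, both of which sit comfortably below $18$.

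The main obstacle lies in the derivative analysis: if one naively bounded $|\partial_1 \psi|$ by the full Sobolev norm $\|\psi\|_{\mathcal{W}^{1,\infty}} \leq 12$ from Lemma \ref{lem:xy_sigma1_wnorm_mod}, the induction would compound a multiplicative factor of $12$ at every step and give exponential growth $\sim 12^{k-1}$ in the error rather than the linear $k-1$ factor in the target bound. The decisive step is therefore to exploit the fact that $\partial_1 \psi$ closely approximates the bounded quantity $x_k \in [0,1]$, which is exactly what the Sobolev (rather than merely $L^\infty$) approximation in Lemma \ref{lem:xy_sigma1_wnorm_mod} provides; this is what keeps the recursion linear and ultimately produces the claimed $10(k-1)(N+1)^{-7kL}$ bound.
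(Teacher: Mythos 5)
Your overall strategy -- building the $k$-fold product by composing bivariate multiplication networks, analyzing the derivative error via the chain rule, and crucially using that $\partial_1\psi$ approximates $x_k$ (rather than bounding it by the full $\mathcal{W}^{1,\infty}$ norm of $\psi$) to keep the error recursion additive -- is exactly the paper's argument, and the decomposition
$\partial_i \phi_k - \partial_i P_k = \partial_1 \psi\,(\partial_i \phi_{k-1} - \partial_i P_{k-1}) + (\partial_1 \psi - x_k)\,\partial_i P_{k-1}$
is a clean restatement of the estimate the paper carries out. The width and depth bookkeeping, and the domain enlargement to accommodate the drift of $\phi_{k-1}$ outside $[0,1]$, also match the paper (which uses $(-0.1,1.1)$).

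There is, however, one genuine flaw in how you have indexed the induction. You induct on the lemma statement itself, so your inductive hypothesis gives $\phi_{k-1}$ with error $E_{k-1} \leq 10(k-2)(N+1)^{-7(k-1)L}$, while the new multiplier contributes $\epsilon = \mathcal{O}((N+1)^{-7kL})$. The recursion $E_k \leq (1+\epsilon)E_{k-1} + C\epsilon$ then yields $E_k \approx 10(k-2)(N+1)^{-7(k-1)L}$, and since $(N+1)^{-7(k-1)L} = (N+1)^{7L}\cdot(N+1)^{-7kL}$, this exceeds the target $10(k-1)(N+1)^{-7kL}$ by a factor of order $(N+1)^{7L}$; the telescoping does not close. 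The fix is what the paper does: fix the final $k$ once and for all, build a \emph{single} bivariate multiplier $\phi_1$ of depth $14kL$ and accuracy $9(N+1)^{-7kL}$, and run an internal induction over the number of factors $i=1,\dots,k-1$, proving $\|\phi_i - x_1\cdots x_{i+1}\|_{\mathcal{W}^{1,\infty}} \leq 10\,i\,(N+1)^{-7kL}$ with the exponent $-7kL$ held fixed throughout. Each composition then adds $\leq 10(N+1)^{-7kL}$ and the depths add to $14k(k-1)L$. With that reformulation your error analysis and norm bound go through as written.
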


\begin{proof}
 By Lemma \ref{lem:xy_sigma1_wnorm_mod}, there exists a $\sigma_1$-$\tn{NN}$ $\phi_1$ with width $9(N+1)+1$ and depth $14kL$ such that {$\|\phi_1\|_{\mathcal{W}^{1,\infty}((-0.1,1.1)^2))} \leq 18$ and }
\begin{eqnarray*}
\|\phi_1(x,y)-xy\|_{\mathcal{W}^{1,\infty}((-0.1,1.1)^2)} &\leq&  6(1.2)^2(N+1)^{-7kL}\\
&\leq& 9(N+1)^{-7kL}.
\end{eqnarray*}

Now, our goal is to construct via induction that for $i=1,2,\dots,k-1$ there exists a $\sigma_1$-$\tn{NN}$ $\phi_i$ with width $9(N+1)+i$ and depth $14kiL$ such that
\begin{equation*}
\|\phi_i( x_1, \cdots,x_{i+1} )-x_1x_2\cdots x_{i+1}\|_{\mathcal{W}^{1,\infty}((0,1)^{i+1})} \leq  10i(N+1)^{-7kL},
\end{equation*}
for any $[x_1,x_2,\cdots,x_{i+1}]^T \in (0,1)^{i+1}.$

When $d=1$, $\phi_1$ satisfies the condition.

Assuming now for any $i \in \{1,2,\dots, d-1\}$ there exists a $\sigma_1$-$\tn{NN}$ $\phi_i$ such that the both conditions hold, we define $\phi_{i+1}$ as follows:
\[
\phi_{i+1}(x_1, \cdots,x_{i+2}) = \phi_1(\phi_i(x_1,\cdots, x_{i+1}), \sigma(x_{i+2}))
\]
for any $x_1,\cdots,x_{k+2} \in \mathbb{R}$. { We can shift $x_{i+2}$ to obtain a nonnegative number $x_{i+2}+0.1$, which can be copied via a ReLU network of width $1$ to the input of $\phi_1$. Before inputting $x_{i+2}+0.1$ to $\phi_1$, we can shift it back to $x_{i+2}$.} Therefore, $\phi_{i+1}$ can be implemented via a $\sigma_1$-$\tn{NN}$ with width $9(N+1)+i+1$ and depth $14kiL + 14kL = 14k(i + 1)L$.

By the induction assumption, we have
\[
\|\phi_i( x_1,x_2, \cdots,x_{i+1} )-x_1x_2\cdots x_{i+1}\|_{\mathcal{W}^{1,\infty}((0,1)^{i+1})} \leq  10i(N+1)^{-7kL}.
\]
Note that $10i(N+1)^{-7k{ L}} \leq 10k2^{-7k} < 10k\frac{1}{100k} = 0.1$ for any $N,L,k \geq d \in \mathbb{N}^+$ and $i\in \{1,2,\cdots,d-1 \}$. Thus, we have
\[
\phi_i(x_1,x_2,\cdots,x_{i+1}) \in (-0.1, 1.1)
\]
and 
\[
 \frac{\partial \phi_i }{\partial x_1}  \in (-0.1, 1.1)
\]
for any $x_1,x_2,\cdots,x_{i+1} \in (0,1)$.

Hence, for any $x_1,x_2,\cdots,x_{i+2} \in (0,1)$, we have
\begin{eqnarray}\nonumber
&&\| \phi_{i+1}(x_1, \cdots,x_{i+2}) - x_1\cdots x_{i+2} \|_{\mathcal{W}^{1,\infty}((0,1)^{i+2})} \\\nonumber
&=& \| \phi_1(\phi_i(x_1,\cdots, x_{i+1}), \sigma(x_{i+2})) - x_1\cdots x_{i+2}\|_{\mathcal{W}^{1,\infty}((0,1)^{i+2})}\\\nonumber
&\leq&  \|  \phi_1(\phi_i(x_1,\cdots, x_{i+1}), x_{i+2})  - \phi_i(x_1,\cdots, x_{i+1})x_{i+2} \|_{\mathcal{W}^{1,\infty}((0,1)^{i+2})} \\\nonumber
&&+ \| \phi_i(x_1,\cdots, x_{i+1})x_{i+2}-x_1\cdots x_{i+2}  \|_{\mathcal{W}^{1,\infty}((0,1)^{i+2})}\\\nonumber
&\leq&  { 10(N+1)^{-7kL} } + 10i(N+1)^{-7kL}\\\label{ineqn:phi_i}
&=&  10(i+1)(N+1)^{-7kL},
\end{eqnarray}

where the second inequality is obtained since we have

\begin{eqnarray*}
\Big| \frac{\partial (\phi_1(\phi_i(x_1,\cdots, x_{i+1}), x_{i+2})  - \phi_i(x_1,\cdots, x_{i+1})x_{i+2})}{\partial x_1} \Big|&=& \Big| \frac{\partial \phi_1(\phi_i(x_1,\cdots, x_{i+1}), x_{i+2})}{\partial \phi_i} \cdot \frac{\partial \phi_i}{\partial x_1}  - x_{i+2} \cdot \frac{\partial \phi_i }{\partial x_1}  \Big| \\
 &=&  \underbrace{\Big| \frac{\partial \phi_1 (\phi_i(x_1,\cdots, x_{i+1}), x_{i+2})}{\partial \phi_i}   - x_{i+2}\Big|}_{\leq 9(N+1)^{-7kL}}   \underbrace{ \Big| \frac{\partial \phi_i }{\partial x_1} \Big|}_{<=1.1} \\
&\leq & 10(N+1)^{-7kL}
\end{eqnarray*}

and

\begin{eqnarray*}
\Big| \frac{\partial (\phi_1(\phi_i(x_1,\cdots, x_{i+1}), x_{i+2})  - \phi_i(x_1,\cdots, x_{i+1})x_{i+2})}{\partial x_{i+2}} \Big|&=& \Big| \frac{\partial \phi_1(\phi_i(x_1,\cdots, x_{i+1}), x_{i+2})}{\partial x_{i+2}}  - \phi_i  \Big| \\
&\leq & 9(N+1)^{-7kL}\\
& \leq& 10(N+1)^{-7kL} .
\end{eqnarray*}

Thus,
\[
|\phi_i(x_1,\cdots, x_{i+1})x_{i+2}-x_1\cdots x_{i+2}|_{\mathcal{W}^{1,\infty}((0,1)^{i+2})}\leq  10(N+1)^{-7kL}.
\] Also, using similar arguments or see the proof of \cite[Lemma 5.1]{lu2020deep}, it can be shown that
\[
|\phi_i(x_1,\cdots, x_{i+1})x_{i+2}-x_1\cdots x_{i+2} |_{L^{\infty}((0,1)^{i+2})}\leq  10(N+1)^{-14kL}.
\] Hence, we have shown \[\|  \phi_1(\phi_i(x_1,\cdots, x_{i+1}), x_{i+2})  - \phi_i(x_1,\cdots, x_{i+1})x_{i+2} \|_{\mathcal{W}^{1,\infty}((0,1)^{i+2})} \leq { 10(N+1)^{-7kL} },\]
which was used in showing (\ref{ineqn:phi_i}).

{
Also, for any $x_1,x_2,\cdots,x_{i+2} \in (0,1)$, we have
\begin{eqnarray}\nonumber
\| \phi_{i+1}(x_1, \cdots,x_{i+2}) \|_{\mathcal{W}^{1,\infty}((0,1)^{i+2})} &=& \| \phi_1(\phi_i(x_1,\cdots, x_{i+1}), \sigma(x_{i+2})) \|_{\mathcal{W}^{1,\infty}((0,1)^{i+2})}\\\nonumber
&\leq& 18.
\end{eqnarray}
}
At last, setting $\phi=\phi_{d-1}$ in (\ref{ineqn:phi_i}), we have finished the proof by induction.
\qed
\end{proof}

\begin{proposition}\label{prop:poly_sigma_1}
Suppose $\bm{x}^{\bm{\alpha}}=x_1^{\alpha_1} x_2^{\alpha_2} \cdots x_d^{\alpha_d}$ for $\bm{\alpha} \in \mathbb{N}^d$ with $|\xal| \leq k \in\mathbb{N}^+ $. For any $N,L \in \mathbb{N}^{+}$, there exists a $\sigma_1$-$\tn{NN}$ $\phi$ with width $9(N+1)+k-1$ and depth $14k^2L$ such that {$\|\phi\|_{\mathcal{W}^{1,\infty}([0,1]^d)} \leq 18$ and }
\begin{equation*}
\|\phi(\bm{x})-\bm{x}^{\bm{\alpha}}\|_{\mathcal{W}^{1,\infty}([0,1]^d)} \leq 10k(N+1)^{-7kL}.
\end{equation*}
\end{proposition}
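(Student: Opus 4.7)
The plan is to derive Proposition \ref{prop:poly_sigma_1} as an essentially direct consequence of Lemma \ref{lem:product_sigma1_wnorm}. The key observation is that every monomial $\bm{x}^{\bm{\alpha}}$ with $|\bm{\alpha}| \leq k$ can be rewritten as a product of exactly $k$ factors: list each coordinate $x_j$ repeated $\alpha_j$ times, then pad with $k - |\bm{\alpha}|$ factors equal to the constant $1$. This defines an affine map $T:[0,1]^d \to [0,1]^k$ with $T(\bm{x}) = (z_1, \ldots, z_k)$ such that $\bm{x}^{\bm{\alpha}} = z_1 z_2 \cdots z_k$ pointwise on $[0,1]^d$.

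Assuming $k \geq 2$, I would then invoke Lemma \ref{lem:product_sigma1_wnorm} with the same $N$ and $L$, and product length $k$, to obtain a $\sigma_1$-$\tn{NN}$ $\psi:\mathbb{R}^k \to \mathbb{R}$ of width $9(N+1)+k-1$ and depth $14k(k-1)L$ satisfying $\|\psi\|_{\mathcal{W}^{1,\infty}((0,1)^k)} \leq 18$ and $\|\psi(\bm{z}) - z_1 z_2 \cdots z_k\|_{\mathcal{W}^{1,\infty}((0,1)^k)} \leq 10(k-1)(N+1)^{-7kL}$. Setting $\phi(\bm{x}) := \psi(T(\bm{x}))$ and absorbing the affine map $T$ into the first weight matrix of $\psi$ yields a network with the same width $9(N+1)+k-1$ and depth $14k(k-1)L \leq 14k^2 L$, as required.

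The error and norm bounds then reduce to a chain-rule computation, which is the main technical step. Since $T$ is affine and $\bm{x}^{\bm{\alpha}} = (z_1 \cdots z_k) \circ T$, one has
\[
\partial_{x_j}\phi - \partial_{x_j}(\bm{x}^{\bm{\alpha}}) = \sum_{i:\, z_i = x_j} \bigl[\partial_{z_i}\psi(T(\bm{x})) - \partial_{z_i}(z_1\cdots z_k)\bigr],
\]
and since $x_j$ appears in at most $\alpha_j \leq k$ slots of $T$ while each bracketed term has magnitude at most $10(k-1)(N+1)^{-7kL}$, the total first-derivative error is bounded by $10k(N+1)^{-7kL}$. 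The $L^\infty$ error estimate follows in the same way (in fact with a tighter constant), and the $\mathcal{W}^{1,\infty}$-norm bound on $\phi$ is controlled by the analogous chain-rule estimate from $\|\psi\|_{\mathcal{W}^{1,\infty}} \leq 18$ together with the fact that the true derivatives of a degree-$k$ monomial on $[0,1]^d$ are bounded by $k$.

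The edge case $k=1$ falls outside the scope of Lemma \ref{lem:product_sigma1_wnorm}, but is trivial since $\bm{x}^{\bm{\alpha}}$ is then either a constant or a coordinate function, exactly representable by a trivial affine $\sigma_1$-$\tn{NN}$ with zero approximation error. The principal (modest) obstacle lies in the bookkeeping of the chain rule across the reshaping map $T$ — specifically, verifying that the accumulated derivative error stays within the target rate $10k(N+1)^{-7kL}$ and that the repetition of variables in $T$ is harmless for the stated Sobolev-norm constant, since the repetition count $\alpha_j$ is controlled by $k$.
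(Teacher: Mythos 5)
Your construction is the same as the paper's: reshape the input by an affine map that lists $x_j$ with multiplicity $\alpha_j$ and pads with $1$'s to length $k$, feed the result into the product network of Lemma \ref{lem:product_sigma1_wnorm}, absorb the affine map into the first layer, and handle $k=1$ separately. The width/depth accounting and the treatment of the trivial case match the paper exactly.

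The one place your argument does not close is the chain-rule step, and it is worth being precise because you have put your finger on a subtlety the paper silently skips. You correctly write
\[
\partial_{x_j}\phi - \partial_{x_j}\bigl(\bm{x}^{\bm{\alpha}}\bigr) \;=\; \sum_{i:\, z_i = x_j} \bigl[\partial_{z_i}\psi(T(\bm{x})) - \partial_{z_i}(z_1\cdots z_k)\bigr],
\]
with up to $\alpha_j \leq k$ summands, each bounded by $10(k-1)(N+1)^{-7kL}$. But then the sum is bounded by $10k(k-1)(N+1)^{-7kL}$, not by the claimed $10k(N+1)^{-7kL}$; the two agree only for $k\leq 2$. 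The paper avoids confronting this by simply equating $\|\psi\circ\mathcal{L} - \bm{x}^{\bm{\alpha}}\|_{\mathcal{W}^{1,\infty}([0,1]^d)}$ (derivatives in $\bm{x}$) with $\|\psi - z_1\cdots z_k\|_{\mathcal{W}^{1,\infty}}$ (derivatives in $\bm{z}$), which is exactly the identification that fails when a variable occupies several slots. So either you accept the weaker constant $10k(k-1)(N+1)^{-7kL}$ (harmless for the rest of the paper, where only the order in $N,L$ and a computable prefactor matter), or you need a per-slot derivative bound of order $10(N+1)^{-7kL}$ rather than the aggregate $10(k-1)(N+1)^{-7kL}$ from Lemma \ref{lem:product_sigma1_wnorm}, which that lemma as stated does not provide. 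The same caveat applies to your $\mathcal{W}^{1,\infty}$ bound on $\phi$ itself: composing with $T$ multiplies the derivative bound of $\psi$ by $\max_j\alpha_j$, so $18$ becomes at worst $18k$ unless one tracks the slot-wise derivatives more carefully.
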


\begin{proof}
This proof is similar to that of \cite[Proposition 4.1]{lu2020deep}, but for readability we provide a detailed proof as follows.

The case when $k=1$ is trivial. When $k\geq 2$, we set $\widetilde{k}= |\xal| \leq k$, $\bm{\alpha}=[\alpha_1,\alpha_2,\dots, \alpha_d]^T \in \mathbb{R}^d$, and let $ [z_1,z_2,\cdots, z_{\widetilde{k}}]^T \in \mathbb{R}^{\widetilde{k}}$ be the vector such that
\[
z_l = x_j,\quad \text{if}~ \sum_{i=1}^{j-1}\alpha_i < l \leq \sum_{i=1}^j \leq \alpha_j, \quad \text{for} ~j=1,2,\cdots,d.
\]
In other words, we have
\[
[z_1,z_2,\cdots, z_{\widetilde{k}}]^T=[\underbrace{x_1,\cdots,x_1}_{\alpha_1~\text{times}}, \underbrace{x_2,\cdots,x_2}_{\alpha_2~\text{times}}, \cdots,\underbrace{x_d,\cdots,x_d}_{\alpha_d~\text{times}}]^T \in \mathbb{R}^{\widetilde{k}}.
\]

We now construct the target deep ReLU neural network. First, there exists a linear map $\mathcal{L}:\mathbb{R}^d \to \mathbb{R}^k$ $\mathcal{L}(\xx) = [z_1,z_2,\cdots,z_{\widetilde{k}}, 1, \cdots, 1]$, which copies $\xx$ to form a new vector $[z_1,z_2,\cdots,z_{\widetilde{k}}, 1, \cdots, 1] \in \mathbb{R}^k$. Second, there exists by Lemma \ref{lem:product_sigma1_wnorm} a function $\psi : \mathbb{R}^k \rightarrow \mathbb{R}$ implemented by a ReLU network with width $9(N+1)+k-1$ and depth $14k(k-1)L$ such that {$\|\psi\|_{\mathcal{W}^{1,\infty}([0,1]^d)} \leq 18$ and} $\psi$ maps $[z_1,z_2,\cdots,z_{\widetilde{k}}, 1, \cdots, 1]$ to $z_1 z_2 \cdots z_{\widetilde{k}}$ within an error of $10(k-1)(N+1)^{-7kL}$.  Thus, we can construct the network $\phi = \psi \circ \mathcal{L}$. Then, $\phi$ can be implemented by a ReLU with width $9(N+1)+k-1$ and depth $14k(k-1)L \leq 14k^2L$, and 
\begin{eqnarray*}
\|\phi(\bm{x})-\bm{x}^{\bm{\alpha}}\|_{\mathcal{W}^{1,\infty}([0,1]^d)} &=& \| \psi \circ \mathcal{L} ( \bm{x} )- x_1^{\alpha_1} x_2^{\alpha_2} \cdots x_d^{\alpha_d} \|_{\mathcal{W}^{1,\infty}([0,1]^d)} \\
&=&\| \psi(z_1,z_2,\cdots,z_{\widetilde{k}}, 1, \cdots, 1) - z_1 z_2 \cdots z_{\widetilde{k}} \|_{\mathcal{W}^{1,\infty}([0,1]^d)}  \\
&\leq& 10(k-1)(N+1)^{-7kL} \\
&\leq& 10k(N+1)^{-7kL}.
\end{eqnarray*}

{
Also, we have
\begin{eqnarray*}
\|\phi(\bm{x})\|_{\mathcal{W}^{1,\infty}([0,1]^d)} &=& \| \psi \circ \mathcal{L} ( \bm{x} )\|_{\mathcal{W}^{1,\infty}([0,1]^d)} \\
&=&\| \psi(z_1,z_2,\cdots,z_{\widetilde{k}}, 1, \cdots, 1)\|_{\mathcal{W}^{1,\infty}([0,1]^d)}  \\
&\leq& 18.
\end{eqnarray*}
}
This proof is then finished.
\qed

\end{proof}

We are now ready to show our main approximation result of Theorem \ref{thm:function_sigma_1}. {The main idea of the proof is based on the recurring use of the Taylor polynomial approximation to smooth functions. We first construct ReLU subnetworks to realize these polynomials and their derivatives, and then implement the target smooth functions by composing the subnetworks. The overall approximation error is estimated by the remainder terms in Taylor's theorem and the error resulted from those ReLU subnetworks for approximating the polynomials.}

\subsection{Proof of Theorem \ref{thm:function_sigma_1}}\label{subsect:proof_thm_sigma_1}

\begin{proof}

We set $K = \lfloor {N^{1/d}} \rfloor^2 \lfloor L^{2/d} \rfloor$ and let {$\Omega{([0,1]^d,K,\delta)}$} defined by (\ref{eqn:trifling_region}) partition $[0,1]^d$ into $K^d$ cubes $Q_{\bm{\beta}}$ for $\bm{\beta} \in \{0,1,\cdots, K-1\}^d$ such that {\[[0,1]^d = \Omega{([0,1]^d,K,\delta)} \bigcup \big( \cup_{\bm{\beta} \in \{0,1,\cdots,K-1\}^d}   Q_{\bm{\beta}} \big). \]} For each  $\bm{\beta}=[\beta_1,\beta_2,\cdots,\beta_d]^T \in \{0,1,\cdots,K-1\}^d$, we define 
\[
Q_{\bm{\beta}} = \Bigg\{ \bm{x}=[x_1,x_2,\cdots,x_d]^T : x_i \in \Big[ \frac{\beta_i}{K}, \frac{\beta_i+1}{K} -\delta \cdot 1_{\{ \beta_i \leq K-2 \}}(\beta_i) \Big]~\text{for}~i=1,2,\dots, d \Bigg\},
\]
where $1_{\{ \beta_i \leq K-2 \}}(\beta_i)$ is the indicator function of the set $\{ \beta_i \leq K-2 \}$. 

By Proposition \ref{prop:step_sigma_1}, there exists a $\sigma_1$-$\tn{NN}$ $\psi$ with width $4N+3$ and depth $4L+5$ such that
\[
\psi(x) = k,\quad \text{if} ~ x\in \big[\frac{k}{K},\frac{k+1}{K}- \delta \cdot 1_{\{ k \leq K-2 \}}\big]~\text{for}~k=0,1,\dots,K-1.
\]
Then, for each $\bm{\beta} \in \{0,1,\cdots,K-1\}^d, \psi(x_i)=\beta_i$ if $\bm{x} \in Q_{\bm{\beta}}~\text{for}~i=1,2,\cdots,d$.

Define
\[
\bm{\psi{(x)}}:= [\psi(x_1),\psi(x_2),\cdots, \psi(x_d)]^T/K \quad \text{for any}~\bm{x} \in [0,1]^d,
\]
then
\[
\bm{\psi{(x)}} =\bm{\beta}/K,\quad \text{if}~\bm{x} \in Q_{\bm{\beta}},\quad \text{for}~\bm{\beta} \in \{0,1,\cdots,K-1\}^d.
\]

 Now, we fix a $\bm{\beta}\in \{ 0,1,\cdots,K-1 \}^d$ throughout the proof. For any $\bm{x} \in Q_{\bm{\beta}}$, by Taylor's expansion there exists a $\xi_{\bm{x}} \in (0,1)$ such that

\begin{equation}\label{eqn:Taylor_error}
{ f(\bm{x})} = \sum_{|\xal|\leq s-1} \frac{\partial^{\bm{\alpha}}f(\bm{\psi(\bm{x})})}{\bm{\alpha}!} \bm{h}^{\bm{\alpha}} + \sum_{|\xal|= s} \frac{\partial^{\bm{\alpha}}f(\bm{\psi(\bm{x})} + \xi_{\bm{x}}\bm{h}  )}{\bm{\alpha}!} \bm{h}^{\bm{\alpha}},
\end{equation}
where $\bm{h} = \bm{x} - \bm{\psi{(\bm{x})}}$.

 By Lemma \ref{lem:xy_sigma1_wnorm_mod}, there exists a $\sigma_1$-$\tn{NN}$ $\widetilde{\phi}$ with width $9(N+1)+1$ and depth $4s(L+1)$ such that {$\|\widetilde{\phi}\|_{\mathcal{W}^{1,\infty}((-3,3)^2)} \leq 432$ and }
\begin{eqnarray}\nonumber
\|\widetilde{\phi}(x,y)-xy\|_{\mathcal{W}^{1,\infty}((-3,3)^2))} &\leq& 6(6)^2(N+1)^{-2s(L+1)}\\\nonumber
&=&  216(N+1)^{-2s(L+1)} =:\mathcal{E}_1.
\end{eqnarray}

For each $\xal \in \mathbb{N}^d$ with $|\xal| \leq s$,  by Proposition \ref{prop:poly_sigma_1} there exist $\sigma_1$-$\tn{NN}s$ $P_{\bm{\alpha}}(\bm{x})$ with width $9(N+1)+s-1 $ and depth $14s^2L$ such that
\begin{equation}\label{eqn:E2}
\|P_{\bm{\alpha}}(\bm{x})-\bm{x}^{\bm{\alpha}}\|_{\mathcal{W}^{1,\infty}([0,1]^d)} \leq 10s(N+1)^{-7sL}=:\mathcal{E}_2.
\end{equation}

For each $i=0,1,\cdots,K^d-1$, we define the bijection
\[
\bm{\eta}(i) = [\eta_1,\eta_2,\cdots,\eta_d]^T \in \{0,1,\dots,K-1 \}^d
\]
such that $\sum_{j=1}^{d}\eta_j K^{j-1}=i$. We will drop the input $i$ in $\bm{\eta}(i)$ later for simplicity. For each $\bm{\alpha} \in \mathbb{N}^d$ with $|\xal| \leq s-1$, define
\[
\xi_{\bm{\alpha},i} = \big(\partial^{\bm{\alpha}}f\big(\frac{\eta}{K}\big) +1 \big)/2.
\]
Note that $K^d = (\lfloor N^{1/d} \rfloor^2 \lfloor L^{2/d} \rfloor)^d \leq N^2L^2$ and $\xi_{\bm{\alpha},i} \in [0,1]$ for $i=0,1,\cdots,K^d-1$. By Proposition \ref{prop:point_sigma_1}, there exists a $\sigma_1$-$\tn{NN}$ $\widetilde{\phi}_{\bm{\alpha}}$ of width $16s(N+1)\log_{2}(8N)$ and depth $5(L+2)\log_{2}(4L)$ such that
\[
| \widetilde{\phi}_{\bm{\alpha}}(i) -  \xi_{\bm{\alpha},i}| \leq N^{-2s}L^{-2s}, \quad \text{for}~i=0,1,\cdots, K^d - 1 ~\text{and}~ |\xal| \leq s-1.
\]

For each $\xal \in \mathbb{N}^d$ with $|\xal| \leq s-1$, we define
\[
\phi_{\bm{\alpha}}(\bm{x}):=2\widetilde{\phi}_{\bm{\alpha}} \Bigg( \sum_{j=1}^{d} x_j { K^{j-1}}  \Bigg) -1,\quad \text{for any}~\bm{x}=[x_1,x_2,\cdots,x_d]^d \in \mathbb{R}^d.
\]It can be seen that $\phi_{\bm{\alpha}}$ is also of width $16s(N+1)\log_{2}(8N)$ and depth $5(L+2)\log_{2}(4L)$. 

Then, for each $\bm{\eta} = [\eta_1,\eta_2,\cdots,\eta_d]^T \in \{0,1,\dots,K-1 \}^d$ corresponding to $i=\sum_{j=1}^{d}\eta_j K^{j-1}$, each $\bm{\alpha} \in \mathbb{N}^d$ with $|\xal| \leq s-1$, we have
\[
\big| \phi_{\bm{\alpha}}(\frac{\bm{\eta}}{K})  - \partial^{\bm{\alpha}}f (\frac{\bm{\eta}}{K}) \big| = \Bigg| 2\widetilde{\phi}_{\bm{\alpha}} \Bigg( \sum_{j=1}^{d} x_j K^{j-1}  \Bigg) -1  - (2\xi_{\bm{\alpha},i} -1) \Bigg| = 2|\widetilde{\phi}_{\bm{\alpha}}(i) - \xi_{\bm{\alpha},i}  | \leq 2N^{-2s}L^{-2s}.
\]

From $\bm{\psi{(x)}}=\frac{\bm{\beta}}{K}$ for $\bm{x} \in Q_{\bm{\beta}}$, it follows that
\begin{eqnarray}\nonumber
\| \phi_{\bm{\alpha} }(\bm{\psi (\bm{x}) }) - \partial^{\bm{\alpha}}f(\bm{\psi (\bm{x}) }) \|_{\mathcal{W}^{1,\infty}(Q_{\bm{\beta}})}&=& \| \phi_{\bm{\alpha} }(\bm{\psi (\bm{x}) }) - \partial^{\bm{\alpha}}f(\bm{\psi (\bm{x}) }) \|_{{L}^{\infty}(Q_{\bm{\beta}})}\\\nonumber
&=&\big| \phi_{\bm{\alpha}}(\frac{\bm{\beta}}{K})  - \partial^{\bm{\alpha}}f (\frac{\bm{\beta}}{K}) \big| \\ \label{eqn:E3}
&\leq& 2N^{-2s}L^{-2s} =:\mathcal{E}_3.
\end{eqnarray}
Note that since $ \phi_{\bm{\alpha} }(\bm{\psi (\bm{x}) }) - \partial^{\bm{\alpha}}f(\bm{\psi (\bm{x}) })$ {for $\bm{x} \in Q_{\bm{\beta}}$ is constant, its weak derivative is zero, which has given us the first equality of (\ref{eqn:E3}).}

Define
\begin{equation}\label{eqn:main_nn_sigma1}
\phi(\bm{x}) = \sum_{|\xal| \leq s-1} \widetilde{\phi} \Big( \frac{\phi_{\bm{\alpha} }(\bm{\psi (\bm{x}) })}{\bm{\alpha}!}, P_{\bm{\alpha}} (\bm{h})  \Big)
\end{equation}
for any $\bm{x} \in \mathbb{R}^d$.

{Let us now first estimate the overall approximation error in semi-norm. We denote the first order derivatives of $f$ by $\partial^{\bm{\gamma}}f$ with $|\bm{\gamma}|=1$. For any $\bm{x} \in Q_{\bm{\beta}}$, by Taylor's expansion there exists a $\xi_{\bm{x}}^{\bm{\gamma}} \in (0,1)$ such that
\begin{eqnarray}\label{eqn:Taylor_error_first_order}
{ \partial^{\bm{\gamma}}f (\bm{x})} &=& \sum_{|\xal|\leq s-2} \frac{\partial^{\bm{\alpha}}\partial^{\bm{\gamma}}f(\bm{\psi(\bm{x})})}{\bm{\alpha}!} \bm{h}^{\bm{\alpha}} + \sum_{|\xal|= s-1} \frac{\partial^{\bm{\alpha}}\partial^{\bm{\gamma}}f(\bm{\psi(\bm{x})} + \xi_{\bm{x}}^{\bm{\gamma}} \bm{h}  )}{\bm{\alpha}!} \bm{h}^{\bm{\alpha}}\\\nonumber
&=& \partial^{\bm{\gamma}} \Big( \sum_{|\xal|\leq s-1} \frac{\partial^{\bm{\alpha}}f(\bm{\psi(\bm{x})})}{\bm{\alpha}!} \bm{h}^{\bm{\alpha}} \Big)+ \sum_{|\xal|= s-1} \frac{\partial^{\bm{\alpha}}\partial^{\bm{\gamma}}f(\bm{\psi(\bm{x})} + \xi_{\bm{x}}^{\bm{\gamma}} \bm{h}  )}{\bm{\alpha}!} \bm{h}^{\bm{\alpha}},
\end{eqnarray}
where $\bm{h} = \bm{x} - \bm{\psi{(\bm{x})}}$.
\\
Therefore,
\begin{eqnarray*}
&& | \phi(\bm{x}) - f(\bm{x}) |_{\mathcal{W}^{1,\infty}(Q_{\bm{\theta}})}\\
&\leq &\Big| \phi(\bm{x})- \sum_{|\xal|\leq s-1} \frac{\partial^{\bm{\alpha}}f(\bm{\psi(\bm{x})})}{\bm{\alpha}!} \bm{h}^{\bm{\alpha}}\Big|_{\mathcal{W}^{1,\infty}(Q_{\bm{\theta}})}  +  \Big| \sum_{|\xal|\leq s-1} \frac{\partial^{\bm{\alpha}}f(\bm{\psi(\bm{x})})}{\bm{\alpha}!} \bm{h}^{\bm{\alpha}}  - f(\bm{x}) \Big|_{\mathcal{W}^{1,\infty}(Q_{\bm{\theta}})}\\
&\leq &  {\sum_{|\xal| \leq s-1}  \Big| \widetilde{\phi} \Big( \frac{\phi_{\bm{\alpha} }(\bm{\psi (\bm{x}) })}{\bm{\alpha}!}, P_{\bm{\alpha}} (\bm{h})  \Big)  -  \frac{\partial^{\bm{\alpha}}f(\bm{\psi(\bm{x})})}{\bm{\alpha}!} \bm{h}^{\bm{\alpha}} \Big|_{\mathcal{W}^{1,\infty}(Q_{\bm{\beta}})}}\\
 &&+ \max_{|\bm{\gamma}|=1} \sum_{|\xal|= s-1} \Big\|   \frac{\partial^{\bm{\alpha}}\partial^{\bm{\gamma}}f(\bm{\psi(\bm{x})} + \xi_{\bm{x}}^{\bm{\gamma}}\bm{h}  )}{\bm{\alpha}!} \bm{h}^{\bm{\alpha}} \Big\|_{\mathcal{L}^{\infty}(Q_{\bm{\beta}})}\\
&\leq &  \underbrace{\sum_{|\xal| \leq s-1}  \Big\| \widetilde{\phi} \Big( \frac{\phi_{\bm{\alpha} }(\bm{\psi (\bm{x}) })}{\bm{\alpha}!}, P_{\bm{\alpha}} (\bm{h})  \Big)  -  \frac{\partial^{\bm{\alpha}}f(\bm{\psi(\bm{x})})}{\bm{\alpha}!} \bm{h}^{\bm{\alpha}} \Big\|_{\mathcal{W}^{1,\infty}(Q_{\bm{\beta}})}}_{=:E_1} \\
&&+\underbrace{ \max_{|\bm{\gamma}|=1} \sum_{|\xal|= s-1} \Big\|   \frac{\partial^{\bm{\alpha}}\partial^{\bm{\gamma}}f(\bm{\psi(\bm{x})} + \xi_{\bm{x}}^{\bm{\gamma}}\bm{h}  )}{\bm{\alpha}!} \bm{h}^{\bm{\alpha}} \Big\|_{\mathcal{L}^{\infty}(Q_{\bm{\beta}})}}_{=:E_2},
\end{eqnarray*}
where $E_2$ with $\xi_{\bm{x}}^{\bm{\gamma}} \in (0,1)$ is the remainder term resulted from Taylor's expansion of $\partial^{\bm{\gamma}}f$ in (\ref{eqn:Taylor_error_first_order}).
}

{The error term} $E_1$ is further decomposed into two parts via
\begin{eqnarray}\nonumber
E_1 &=& \sum_{|\xal| \leq s-1}  \Big\| \widetilde{\phi} \Big( \frac{\phi_{\bm{\alpha} }(\bm{\psi (\bm{x}) })}{\bm{\alpha}!}, P_{\bm{\alpha}} (\bm{h})  \Big)  -  \frac{\partial^{\bm{\alpha}}f(\bm{\psi(\bm{x})})}{\bm{\alpha}!} \bm{h}^{\bm{\alpha}} \Big\|_{\mathcal{W}^{1,\infty}(Q_{\bm{\beta}})}\\\nonumber
&\leq& \underbrace{\sum_{|\xal| \leq s-1}  \Big\| \widetilde{\phi} \Big( \frac{\phi_{\bm{\alpha} }(\bm{\psi (\bm{x}) })}{\bm{\alpha}!}, P_{\bm{\alpha}} (\bm{h})  \Big)  -  \widetilde{\phi} \Big( \frac{\partial^{\bm{\alpha}}f(\bm{\psi(\bm{x})})  }{\bm{\alpha}!}, P_{\bm{\alpha}} (\bm{h})  \Big) \Big\|_{\mathcal{W}^{1,\infty}(Q_{\bm{\beta}})}}_{=:E_{1,1}} \\\nonumber
&&+ \underbrace{\sum_{|\xal| \leq s-1}  \Big\| \widetilde{\phi} \Big( \frac{\partial^{\bm{\alpha}}f(\bm{\psi(\bm{x})})  }{\bm{\alpha}!}, P_{\bm{\alpha}} (\bm{h})  \Big)  -  \frac{\partial^{\bm{\alpha}}f(\bm{\psi(\bm{x})})}{\bm{\alpha}!} \bm{h}^{\bm{\alpha}} \Big\|_{\mathcal{W}^{1,\infty}(Q_{\bm{\beta}})}}_{=:E_{1,2}}.
\end{eqnarray}

For each $\xal \in \mathbb{R}^d$ with $|\xal| \leq s-1$ and $\bm{x} \in Q_{\bm{\beta}}$, since $\mathcal{E}_3 \in [0,2]$ and $\partial^{\bm{\alpha}}f(\bm{\psi(\bm{x})}) \in (-1,1)$ according to (\ref{eqn:E3}), we have $\phi_{\bm{\alpha} }(\bm{\psi (\bm{x}) })\in (-3,3)$ and hence $\frac{\phi_{\bm{\alpha} }(\bm{\psi (\bm{x}) })}{\bm{\alpha}!}\in (-3,3)$.

Also, we have $P_{\bm{\alpha}} (\bm{x}) \in (-2,3) \subset (-3,3)$ and $ \|P_{\bm{\alpha}} (\bm{x}) \|_{\mathcal{W}{^{1,\infty}([0,1]^d)}} \leq 3$ for any $\bm{x} \in [0,1]^d$ and $|\xal| \leq s-1$, since $\mathcal{E}_2 <2$ from (\ref{eqn:E2}). 

Hence, we can now measure $E_{1,1}$, $E_{1,2}$ and $E_2$:

\begin{eqnarray}\nonumber
E_{1,1} &=& \sum_{|\xal| \leq s-1}  \Big\| \widetilde{\phi} \Big( \frac{\phi_{\bm{\alpha} }(\bm{\psi (\bm{x}) })}{\bm{\alpha}!}, P_{\bm{\alpha}} (\bm{h})  \Big)  -  \widetilde{\phi} \Big( \frac{\partial^{\bm{\alpha}}f(\bm{\psi(\bm{x})})  }{\bm{\alpha}!}, P_{\bm{\alpha}} (\bm{h})  \Big) \Big\|_{\mathcal{W}^{1,\infty}(Q_{\bm{\beta}})}\\\nonumber
&\leq& \sum_{|\xal| \leq s-1}  \Bigg(   \underbrace{\Big\| \widetilde{\phi} \Big( \frac{\phi_{\bm{\alpha} }(\bm{\psi (\bm{x}) })}{\bm{\alpha}!}, P_{\bm{\alpha}} (\bm{h})  \Big)  -    \frac{\phi_{\bm{\alpha} }(\bm{\psi (\bm{x}) })}{\bm{\alpha}!} P_{\bm{\alpha}} (\bm{h})   \Big\|_{\mathcal{W}^{1,\infty}(Q_{\bm{\beta}})}}_{\leq \mathcal{E}_1}  \\\nonumber
&&+ \underbrace{ \Big\| \widetilde{\phi} \Big( \frac{\partial^{\bm{\alpha}}f(\bm{\psi(\bm{x})})  }{\bm{\alpha}!}, P_{\bm{\alpha}} (\bm{h})  \Big)  -   \frac{\partial^{\bm{\alpha}}f(\bm{\psi(\bm{x})})  }{\bm{\alpha}!} P_{\bm{\alpha}} (\bm{h})   \Big\|_{\mathcal{W}^{1,\infty}(Q_{\bm{\beta}})}}_{\leq \mathcal{E}_1}  \\\nonumber
&&+  \underbrace{\Big\|  \frac{\phi_{\bm{\alpha} }(\bm{\psi (\bm{x}) })}{\bm{\alpha}!} P_{\bm{\alpha}} (\bm{h})    -   \frac{\partial^{\bm{\alpha}}f(\bm{\psi(\bm{x})})  }{\bm{\alpha}!} P_{\bm{\alpha}} (\bm{h})   \Big\|_{\mathcal{W}^{1,\infty}(Q_{\bm{\beta}})}}_{\leq 3\mathcal{E}_3}  \Bigg)\\\nonumber
&\leq& \sum_{|\xal| \leq s-1} (  2\mathcal{E}_1 + 3\mathcal{E}_3)\\\nonumber
&\leq& s^d(  2\mathcal{E}_1 + 3\mathcal{E}_3).
\end{eqnarray}

Note that the last inequality is followed by the fact that $\sum_{|\xal|\leq s-1} 1 \leq \sum_{i=0}^{s-1}(i+1)^{d-1} \leq s^d$.

 Thus,
 \begin{eqnarray}\nonumber
E_{1,2} &=& \sum_{|\xal| \leq s-1}  \Big\| \widetilde{\phi} \Big( \frac{\partial^{\bm{\alpha}}f(\bm{\psi(\bm{x})})  }{\bm{\alpha}!}, P_{\bm{\alpha}} (\bm{h})  \Big)  -  \frac{\partial^{\bm{\alpha}}f(\bm{\psi(\bm{x})})}{\bm{\alpha}!} \bm{h}^{\bm{\alpha}} \Big\|_{\mathcal{W}^{1,\infty}(Q_{\bm{\beta}})}\\\nonumber
&\leq& \sum_{|\xal| \leq s-1}  \underbrace{\Big\| \widetilde{\phi} \Big( \frac{\partial^{\bm{\alpha}}f(\bm{\psi(\bm{x})})  }{\bm{\alpha}!}, P_{\bm{\alpha}} (\bm{h})  \Big)  -  \frac{\partial^{\bm{\alpha}}f(\bm{\psi(\bm{x})})}{\bm{\alpha}!} P_{\bm{\alpha}} (\bm{h}) \Big\|_{\mathcal{W}^{1,\infty}(Q_{\bm{\beta}})}}_{\leq \mathcal{E}_1} \\\nonumber
&&+ \sum_{|\xal| \leq s-1}  {\underbrace{\Big\| \frac{\partial^{\bm{\alpha}}f(\bm{\psi(\bm{x})})}{\bm{\alpha}!} P_{\bm{\alpha}} (\bm{h}) -  \frac{\partial^{\bm{\alpha}}f(\bm{\psi(\bm{x})})}{\bm{\alpha}!} \bm{h}^{\bm{\alpha}} \Big\|_{\mathcal{W}^{1,\infty}(Q_{\bm{\beta}})}}_{_{\leq \mathcal{E}_2}}}\\\nonumber
&\leq& \sum_{|\xal| \leq s-1} (\mathcal{E}_1 + \mathcal{E}_2)\\\nonumber
&\leq& s^d (\mathcal{E}_1 + \mathcal{E}_2).
\end{eqnarray}

{
We now provide a measure on $E_2$. First, by the assumption that $\| \partial^{\bm{\alpha}} f  \|_{L^{\infty}([0,1]^d)} < 1$ for any $\bm{\alpha} \in \mathbb{N}^d$ with $|\xal| \leq s$, we have
 \begin{eqnarray*}
E_2 &=& \max_{|\bm{\gamma}|=1} \sum_{|\xal|= s-1} \Big\|   \frac{\partial^{\bm{\alpha}}\partial^{\bm{\gamma}}f(\bm{\psi(\bm{x})} + \xi_{\bm{x}}^{\bm{\gamma}}\bm{h}  )}{\bm{\alpha}!} \bm{h}^{\bm{\alpha}} \Big\|_{\mathcal{L}^{\infty}(Q_{\bm{\beta}})}\\
 &\leq &\sum_{|\xal|= s-1} \Big\| \frac{1}{\bm{\alpha}!}  \bm{h}^{\bm{\alpha}} \Big\|_{{L}^{\infty}(Q_{\bm{\beta}})}\\
 &\leq & s^{d-1}K^{-(s-1)},
 \end{eqnarray*}
where the last inequality is followed by $\sum_{|\xal|=s-1} 1 \leq s^{d-1}$.
\\
Thus, we have shown that
 \begin{eqnarray*}
&&| \phi(\bm{x}) - f(\bm{x}) |_{\mathcal{W}^{1,\infty}(Q_{\bm{\beta}})} \\\nonumber
&\leq& E_{1,1} + E_{1,2} + E_2 \\\nonumber
&\leq& E_{1,1} + E_{1,2}   + s^{d-1} K^{-(s-1)}.
\end{eqnarray*}
We now estimate $\| \phi(\bm{x}) - f(\bm{x}) \|_{{L}^{\infty}(Q_{\bm{\beta}})}$. Using similar arguments, from the Taylor expansion defined in (\ref{eqn:Taylor_error}) with $\xi_{\bm{x}}$ in the remainder term, we can show that
 \begin{eqnarray*}
&&\| \phi(\bm{x}) - f(\bm{x}) \|_{{L}^{\infty}(Q_{\bm{\beta}})} \\\nonumber
&\leq& \sum_{|\xal| \leq s-1}  \Big\| \widetilde{\phi} \Big( \frac{\phi_{\bm{\alpha} }(\bm{\psi (\bm{x}) })}{\bm{\alpha}!}, P_{\bm{\alpha}} (\bm{h})  \Big)  -  \frac{\partial^{\bm{\alpha}}f(\bm{\psi(\bm{x})})}{\bm{\alpha}!} \bm{h}^{\bm{\alpha}} \Big\|_{{L}^{\infty}(Q_{\bm{\beta}})} +  \sum_{|\xal|=s}  \Big\|   \frac{\partial^{\bm{\alpha}}f(\bm{\psi(\bm{x})} + \xi_{\bm{x}}\bm{h}  )}{\bm{\alpha}!} \bm{h}^{\bm{\alpha}} \Big\|_{{L}^{\infty}(Q_{\bm{\beta}})}\\
&\leq& \sum_{|\xal| \leq s-1}  \Big\| \widetilde{\phi} \Big( \frac{\phi_{\bm{\alpha} }(\bm{\psi (\bm{x}) })}{\bm{\alpha}!}, P_{\bm{\alpha}} (\bm{h})  \Big)  -  \frac{\partial^{\bm{\alpha}}f(\bm{\psi(\bm{x})})}{\bm{\alpha}!} \bm{h}^{\bm{\alpha}} \Big\|_{\mathcal{W}^{1,\infty}(Q_{\bm{\beta}})}  + (s+1)^{d-1} K^{-s}\\
&\leq& E_{1,1} + E_{1,2}   + (s+1)^{d-1} K^{-s},
\end{eqnarray*}
where the second last inequality is due to $\sum_{|\xal|=s} 1 \leq (s+1)^{d-1}$.
}

Using $(N+1)^{-7s(L+1)} \leq (N+1)^{-2s(L+1)} \leq (N+1)^{-2s}2^{-{ 2}sL} \leq N^{-2s}L^{-2s}$ and $K=\lfloor N^{1/d} \rfloor^2 \lfloor L^{2/d} \rfloor \geq \frac{N^{2/d} L^{2/d}}{8}$, we have
{
\begin{eqnarray}\nonumber
&& \| \phi(\bm{x}) - f(\bm{x}) \|_{\mathcal{W}^{1,\infty}(Q_{\bm{\beta}})} \\\nonumber
&=& \max{\big\{ \| \phi(\bm{x}) - f(\bm{x}) \|_{{L}^{\infty}(Q_{\bm{\beta}})}   , | \phi(\bm{x}) - f(\bm{x}) |_{\mathcal{W}^{1,\infty}(Q_{\bm{\beta}})} \big\}}  \\\nonumber
&\leq& E_{1,1} + E_{1,2} + (s+1)^{d-1} K^{-(s-1)} \\\nonumber
&\leq& s^d(  2\mathcal{E}_1 + 3\mathcal{E}_3) + s^d (\mathcal{E}_1 + \mathcal{E}_2 )  + (s+1)^{d-1} K^{-(s-1)}\\\nonumber
&\leq& (s+1)^d( K^{-(s-1)} +3\mathcal{E}_1 + \mathcal{E}_2 +3\mathcal{E}_3 )\\\nonumber
&\leq& (s+1)^d\big(  K^{-(s-1)} + 648(N+1)^{-2s(L+1)} + 10s(N+1)^{-7s(L+1)} + 6N^{-2s}L^{-2s} \big)\\\nonumber
&\leq&(s+1)^d\big(  8^{s-1}N^{-2(s-1)/d}L^{-2(s-1)/d} + ( 654+10s)N^{-2s}L^{-2s} \big)\\\nonumber
&\leq&(s+1)^d  (8^{s-1}+654+10s)N^{-2(s-1)/d}L^{-2(s-1)/d}  \\\nonumber
&\leq& 84(s+1)^d 8^{s}N^{-2(s-1)/d}L^{-2(s-1)/d}.
\end{eqnarray}
}

Since $\bm{\beta} \in \{0,1,2,\cdots,K-1  \}^d$ is arbitrary and the fact that {$[0,1]^d \backslash \Omega{([0,1]^d,K,\delta)} \subseteq  \cup_{\bm{\beta} \in \{0,1,\cdots,K-1\}^d}  Q_{\bm{\beta}} $}, we have
{\[
\| \phi(\bm{x}) - f(\bm{x}) \|_{\mathcal{W}^{1,\infty}([0,1]^d \backslash \Omega{([0,1]^d,K,\delta)})} \leq  84(s+1)^d 8^{s}N^{-2(s-1)/d}L^{-2(s-1)/d}.
\]}

{
Furthermore, we have
\begin{eqnarray*}
\| \phi(\bm{x})\|_{\mathcal{W}^{1,\infty}([0,1]^d )} &=& \Bigg\|   \sum_{|\xal| \leq s-1} \widetilde{\phi} \Big( \frac{\phi_{\bm{\alpha} }(\bm{\psi (\bm{x}) })}{\bm{\alpha}!}, P_{\bm{\alpha}} (\bm{h})  \Big) \Bigg\|_{\mathcal{W}^{1,\infty}([0,1]^d )}\\
&\leq &  \sum_{|\xal| \leq s-1}  \| \widetilde{\phi}\|_{\mathcal{W}^{1,\infty}([0,1]^d)}\\
&\leq& 432s^d.
\end{eqnarray*}
}

As last, we finish the proof by estimating the width and depth of the network implementing $\phi(\bm{x})$. From (\ref{eqn:main_nn_sigma1}), we know that $\phi(\bm{x})$ consists of the following subnetworks:

\begin{enumerate}
\item $\bm{\psi} \in \tn{NN}(\textrm{width} \leq d(4N+3);\textrm{depth} \leq 4L+5)$;
\item $ \phi_{\bm{\alpha} } \in \tn{NN}(\textrm{width} \leq 16s(N+1)\log_{2}(8N);\textrm{depth} \leq 5(L+2)\log_{2}(4L))$;
\item $P_{\bm{\alpha}} \in \tn{NN}(\textrm{width} \leq 9(N+1)+s-1;\textrm{depth} \leq 14s^2L)$;
\item $\widetilde{\phi} \in \tn{NN}(\textrm{width} \leq 9(N+1)+1;\textrm{depth} \leq 4s(L+1))$.
\end{enumerate}

Thus, we can infer that the $\phi$ can be implemented by a ReLU network with width $16s^{d+1}{ d}(N+2)\log_2{(8N)}$ and depth 
\begin{equation*}
	\begin{split}
		(4L+5)+4s(L+1)+14s^2L+5(L+2)\log_2(4L) +3\le
		27s^2(L+2)\log_2(4L)
	\end{split}.
\end{equation*} Hence, we have finished the proof.
\qed
\end{proof}

\subsection{Approximation Using $\sigma_2$-Neural Networks}\label{section:signma_2}

In this subsection, we provide approximation results for smoothing functions measured in the $\mathcal{W}^{n,\infty}$ norm, where $n$ is a positive integer, using the smoother $\sigma_2$-NNs. First, we list a few basic lemmas of $\sigma_2$-NNs repeatedly applied in our main analysis.

\begin{lemma}\label{lem:NNex2}
The following basic lemmas of $\sigma_2$-$\tn{NN}$s hold:
\begin{enumerate}[label=(\roman*)]
\item $\sigma_1$-$\tn{NN}$s are $\sigma_2$-$\tn{NN}$s.
\item Any identity map in $\R^d$ can be realized exactly by a $\sigma_2$-$\tn{NN}$ with one hidden layer and $2d$ neurons.
\item $f(x)=x^2$ can be realized exactly by $\sigma_2$-$\tn{NN}$ with one hidden layer and two neurons.
\item $f(x,y)=xy=\frac{(x+y)^2-(x-y)^2}{4}$ can be realized exactly by $\sigma_2$-$\tn{NN}$ with one hidden layer and four neurons.
\item Assume $\xx^\xal=x_1^{\alpha_1}x_2^{\alpha_2}\cdots x_d^{\alpha_d}$ for $\xal\in \N^d$. For any $N,L\in \N^+$ such that $NL+2^{\lfloor \log_2 N \rfloor}\geq |\xal|$, there exists a $\sigma_2$-$\tn{NN}$ $\phi$ with width $4N+2d$ and depth $L+\lceil \log_2 N\rceil$ such that
    \begin{equation*}
    \phi(\xx)=\xx^\xal \quad \tn{for any $\xx\in \R^d$.}
    \end{equation*} 
    \item Assume $P(\xx)= \sum_{j=1}^J c_j \xx^{\xal_j}$ for $\xal_j\in \N^d$. For any $N,L,a,b\in \N^+$ such that $ab\geq J$ and $(L-2b-b\log_2 N)N\geq  b \max_j |\xal_j|$, there exists a $\sigma_2$-$\tn{NN}$ $\phi$ with width $4Na+2d+2$ and depth $L$ such that
    \begin{equation*}
 \phi(\xx)=P(\xx)\quad \tn{for any $\xx\in \R^d$.}
    \end{equation*}
\end{enumerate}
\end{lemma}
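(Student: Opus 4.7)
The plan is to verify (i)--(vi) in turn, with (i)--(iv) being immediate consequences of elementary identities, and (v)--(vi) requiring a careful parallel-multiplication construction that reuses (iv) as the atomic ``multiplier'' unit.

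For (i), a $\sigma_1$-NN uses only $\sigma_1$ activations, which is a permitted choice in each neuron of a $\sigma_2$-NN by definition; the same weights and biases realize the same map. For (ii), write $x_i = \sigma_1(x_i) - \sigma_1(-x_i)$ coordinatewise to obtain an exact identity using $2d$ $\sigma_1$-neurons in one hidden layer. For (iii), use $x^2 = \sigma_1^2(x) + \sigma_1^2(-x)$, since exactly one of $\sigma_1(x)$, $\sigma_1(-x)$ equals $|x|$ and the other is $0$; two $\sigma_1^2$-neurons suffice. For (iv), apply (iii) to the affine pre-activations $x+y$ and $x-y$ to compute $(x+y)^2$ and $(x-y)^2$ using four $\sigma_1^2$-neurons in one hidden layer, then combine them with the linear readout coefficients $\pm \tfrac{1}{4}$ to obtain $xy$ exactly.

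For (v), I would organize the computation of $\xx^{\xal}$ as the product of $k := |\xal|$ scalar factors (each $x_i$ repeated $\alpha_i$ times) via a two-phase scheme. In Phase~A, run $N$ independent serial accumulators in parallel along the depth, each consuming one new factor per layer via the exact multiplication unit of (iv); after $L$ layers the $N$ accumulators contain partial products built from $N L$ factors, and $2d$ auxiliary $\sigma_1$-channels (by (ii)) carry the raw inputs $\xx$ forward at every layer, which fixes the width at $4N+2d$. In Phase~B, combine the $N$ resulting partial products together with any still-unconsumed factors using a balanced binary multiplication tree of depth $\lceil \log_2 N\rceil$, each internal node being an (iv)-multiplier; the slack in the lowest level of this tree can absorb up to $2^{\lfloor \log_2 N\rfloor}$ residual factors without increasing depth. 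This accommodates every monomial satisfying $NL+ 2^{\lfloor \log_2 N\rfloor}\geq |\xal|$, at depth $L+\lceil \log_2 N\rceil$.

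For (vi), partition the $J$ monomials into $b$ batches of size at most $a$ (possible by $ab\geq J$) and process the batches sequentially along the depth direction. Within a batch, the $a$ monomials are computed in parallel via $a$ side-by-side copies of the (v)-network, costing width $4Na$; two additional channels carry the raw input $\xx$ forward and maintain a running linear combination $\sum_{j\text{ processed}} c_j \xx^{\xal_j}$, yielding total width $4Na+2d+2$. Each batch is allotted $L_1 := (L-2b-b\lceil \log_2 N\rceil)/b$ layers for Phase~A plus $\lceil \log_2 N\rceil$ for Phase~B plus $2$ layers to fold its partial sum into the running accumulator; the stated hypothesis $(L-2b-b\log_2 N)N \geq b\max_j |\xal_j|$ is then exactly $N L_1 + 2^{\lfloor \log_2 N\rfloor}\geq \max_j |\xal_j|$ (up to the $2^{\lfloor \log_2 N\rfloor}$ slack), which is what (v) needs to apply within each batch.

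The main obstacle is the bookkeeping in (v): making sure that each of the $|\xal|$ scalar factors is fed in exactly once, that the $N$ serial accumulators of Phase~A are aligned in depth with the binary tree of Phase~B, and that the extra $2^{\lfloor \log_2 N\rfloor}$ factors can be slotted into the tree without extending its depth or its $4N$-width bound. Once (v) is proven, (vi) is a meta-construction that runs (v) in parallel batches with a running sum, and its width and depth counts reduce to arithmetic on the (v) budget.
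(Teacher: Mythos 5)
Your parts (i)--(iv) and the overall architecture for (v)--(vi) --- exact four-neuron multipliers built from $\frac{(x+y)^2-(x-y)^2}{4}$, $2d$ ReLU identity channels carrying $\xx$ forward, $N$ parallel multiplication chains over $L$ layers followed by a dyadic tree of depth $\lceil\log_2 N\rceil$ for a single monomial, and an $a\times b$ array of such blocks sharing the identity channels with a width-$2$ running-sum channel for a general polynomial --- coincide with the paper's construction, including the same width and depth accounting for (vi).

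The one step that does not hold as literally stated is your Phase-B claim in (v) that the depth-$\lceil\log_2 N\rceil$ tree can ``absorb up to $2^{\lfloor\log_2 N\rfloor}$ residual factors'' on top of the $N$ accumulator outputs. A binary multiplication tree of depth $D$ can merge at most $2^D$ leaves into one value, so with $D=\lceil\log_2 N\rceil$ and $N$ a power of two there is no slack at all: if $|\xal|=NL+2^{\lfloor\log_2 N\rfloor}=NL+N$, your Phase A covers only $NL$ factors, the tree then has $2N$ leaves, and you are one layer short of the stated depth $L+\lceil\log_2 N\rceil$. The standard repair is to move the slack into Phase A rather than Phase B: let each of the $N$ chains take two coordinates of $\xx$ as the inputs of its first multiplier (both are available from the input layer), so the chains consume up to $N(L+1)\geq NL+2^{\lfloor\log_2 N\rfloor}$ factors over $L$ layers, chains assigned fewer factors being padded with the constant $1$, and the tree only combines the $N$ chain outputs. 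With that adjustment the width remains $4N+2d$ and the depth $L+\lceil\log_2 N\rceil$, and the rest of your argument --- in particular the reduction of (vi) to batched copies of (v) with the shared identity map and the two extra accumulator channels, whose depth budget reproduces the hypothesis $(L-2b-b\log_2 N)N\geq b\max_j|\xal_j|$ --- goes through exactly as in the paper, whose own proof of (v)--(vi) is itself only a figure-level sketch with comparably loose factor-versus-multiplication bookkeeping.
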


\begin{proof}
Showing (i) to (iv) is trivial. We will only prove (v) and (vi) in the following.

Part (v): In the case of $|\xal|=k\leq 1$, the proof is simple and left for the reader. When $|\xal|=k\ge 2$, the main idea of the proof of (v) can be summarized in Figure \ref{fig:NN1}. We apply $\sigma_1$-$\tn{NN}$s to implement a $d$-dimensional identity map as in Lemma \ref{lem:NNex1} (iii). These identity maps maintain necessary entries of $\xx$ to be multiplied together. We apply $\sigma_2$-$\tn{NN}$s to implement the multiplication function in Lemma \ref{lem:NNex2} (iii) and carry out the multiplication $N$ times per layer. After $L$ layers, there are $k-NL\leq N$ multiplication to be implemented. Finally, these at most $N$ multiplications can be carried out with a small $\sigma_2$-$\tn{NN}$s in a dyadic tree structure.

Part (vi): The main idea of the proof is to apply Part (v) $J$ times to construct $J$ $\sigma_2$-$\tn{NN}$s, $\{\phi_j(\xx)\}_{j=1}^J$, to represent $\xx^{\xal_j}$ and arrange these $\sigma_2$-$\tn{NN}$s as sub-NN blocks to form a larger $\sigma_2$-$\tn{NN}$ $\tilde{\phi}(\xx)$ with $ab$ blocks as shown in Figure \ref{fig:NN2}, where each red rectangle represents one $\sigma_2$-$\tn{NN}$ $\phi_j(\xx)$ and each blue rectangle represents one $\sigma_1$-$\tn{NN}$ of width $2$ as an identity map of $\R$. There are $ab$ red blocks with $a$ rows and $b$ columns. When $ab\geq J$, these sub-NN blocks can carry out all monomials $\xx^{\xal_j}$. In each column, the results of the multiplications of $\xx^{\xal_j}$ are added up to the input of the narrow $\sigma_1$-$\tn{NN}$, which can carry the sum over to the next column. After the calculation of $b$ columns, $J$ additions of the monomials $\xx^{\xal_j}$ have been implemented, resulting in the output $P(\xx)$. 

By Part (v), for any $N\in \N^+$, there exists a $\sigma_2$-$\tn{NN}$ $\phi_j(\xx)$ of width $2d+4N$ and depth $L_j = \lceil \frac{|\xal_j|}{N}\rceil +\lceil \log_2 N \rceil$ to implement $\xx^{\xal_j}$. Note that $b\max_j L_j\leq b\left(  \frac{\max_j |\xal_j|}{N} +2 + \log_2 N \right)$. Hence, there exists a $\sigma_2$-$\tn{NN}$ $\tilde{\phi}(\xx)$ of width $2da+4Na+2$ and depth $b\left(  \frac{\max_j |\xal_j|}{N} + 2 + \log_2 N \right)$ to implement $P(\xx)$ as in Figure \ref{fig:NN2}. Note that the total width of each column of blocks is $2ad+4Na+2$ but in fact this width can be reduced to $2d+4Na+2$, since the red blocks in each column can share the same identity map of $\R^d$ (the blue part of Figure \ref{fig:NN1}).

Note that $b\left(  \frac{\max_j |\xal_j|}{N} + 2 + \log_2 N \right)\leq L$ is equivalent to $(L-2b-b\log_2 N)N\geq b \max_j |\xal_j|$. Hence, for any $N,L,a,b\in \N^+$ such that $ab\geq J$ and $(L-2b-b\log_2 N)N\geq b \max_j |\xal_j|$, there exists a $\sigma_2$-$\tn{NN}$ $\phi(\xx)$ with width $4Na+2d+2$ and depth $L$ such that $\tilde{\phi}(\xx)$ is a sub-NN of $\phi(\xx)$ in the sense of $\phi(\xx)=\tn{Id}\circ\tilde{\phi}(\xx)$ with $\tn{Id}$ as an identify map of $\R$, which means that $\phi(\xx)=\tilde{\phi}(\xx)=P(\xx)$. The proof of Part (vi) is completed.
\qed
\end{proof}

	\begin{figure}[!ht]
		\centering
		\includegraphics[width=0.8\linewidth]{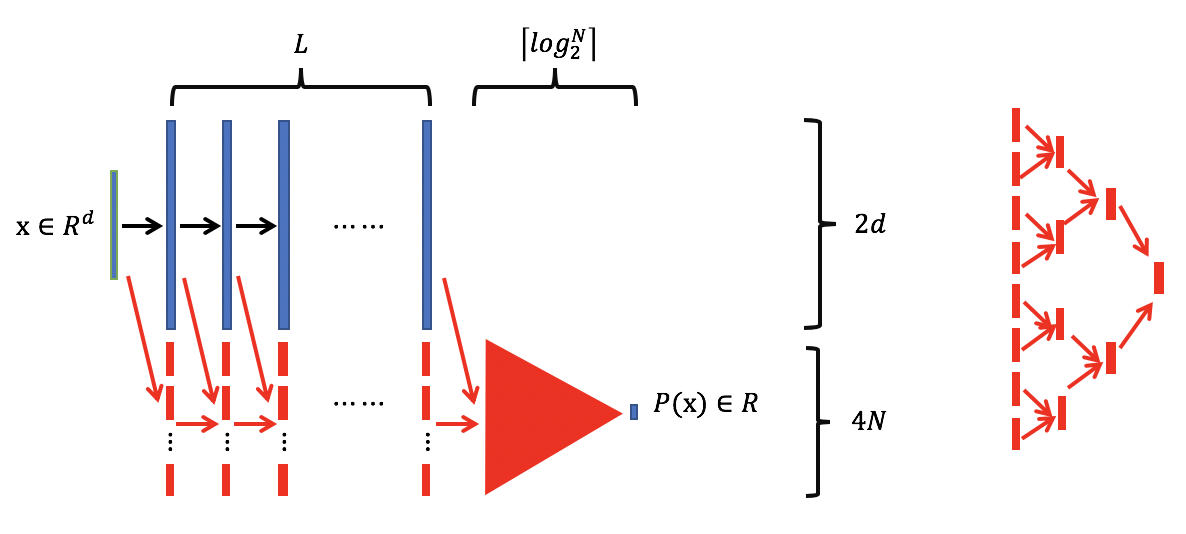}
		\caption{Left: An illustration of the proof of Lemma \ref{lem:NNex2} (v). Green vectors represent the input and output of the $\sigma_2$-$\tn{NN}$ carrying out $P(\xx)$. Blue vectors represent the $\sigma_1$-$\tn{NN}$ that implements a $d$-dimensional identity map in Lemma \ref{lem:NNex1} (iii), which was repeatedly applied for $L$ times. Black arrows represent the data flow for carrying out the identity maps. Red vectors represent the $\sigma_2$-$\tn{NN}$s implementing the multiplication function in Lemma \ref{lem:NNex2} (iii) and there $NL$ such red vectors. Red arrows represent the data flow for carrying out the multiplications. Finally, a red triangle represent a $\sigma_2$-$\tn{NN}$ of width at most $4N$ and depth at most $\lceil \log_2^N \rceil$ carrying out the rest of the multiplications. Right: An example of the red triangle is given on the right when it consists of $15$ red vectors carrying out $15$ multiplications.}
		\label{fig:NN1}
	\end{figure}

	\begin{figure}[!ht]
		\centering
		\includegraphics[width=0.8\linewidth]{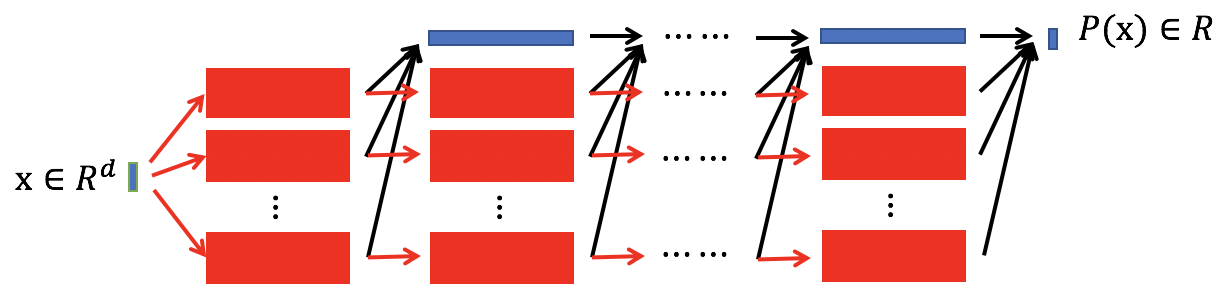}
		\caption{An illustration of the proof of Lemma \ref{lem:NNex2} (vi). Green vectors represent the input and output of the $\sigma_2$-$\tn{NN}$ $\tilde{\phi}(\xx)$ carrying out $P(\xx)$. Each red rectangle represents one $\sigma_2$-$\tn{NN}$ $\phi_j(\xx)$ and each blue rectangle represents one $\sigma_1$-$\tn{NN}$ of width $2$ as an identity map of $\R$. There are $ab\geq J$ red blocks with $a$ rows and $b$ columns. When $ab\geq J$, these sub-NN blocks can carry out all monomials $\xx^{\xal_j}$. In each column, the results of the multiplications of $\xx^{\xal_j}$ are added up to (indicated by black arrows) the input of the narrow $\sigma_1$-$\tn{NN}$, which can carry the sum over to the next column. Each red arrow passes $\xx$ to the next red block. After the calculation of $b$ columns, $J$ additions of the monomials $\xx^{\xal_j}$ have been implemented, resulting in the output $P(\xx)$.}
		\label{fig:NN2}
	\end{figure}

{Similar to the ReLU network case, we first present the following approximation result before showing our main theorem - Theorem \ref{corol:function_sigma_2}.}

\begin{theorem}\label{thm:function_sigma_2}
Suppose that $f \in C^s([0,1]^d)$ with $s\in \mathbb{N}^+$ satisfies $\| \partial^{\bm{\alpha}} f  \|_{L^{\infty}([0,1]^d)} < 1$ for any $\bm{\alpha} \in \mathbb{N}^d$ with $|\xal| \leq s$. For any $N,L \in \N^+$ satisfying $(L-2-\log_2 N)N \geq s$, there exists a $\sigma_2$-$\tn{NN}$ $\phi$ with width $16s^{d+1}d(N+2)\log_2{({8}N)}$ and depth $10(L+2)\log_2(4L)$ such that {$\| \phi(x) \|_{\mathcal{W}^{n,\infty}([0,1])} \leq s^d$ and} 

{\[
\| f - \phi \|_{\mathcal{W}^{n,\infty}([0,1]^d \backslash \Omega{([0,1]^d,K,\delta)})} \leq  2(s+1)^d 8^{s-n}N^{-2(s-n)/d}L^{-2(s-n)/d}
\]}
where $n < s$ is a positive integer, $K = \lfloor N^{1/d} \rfloor^2 \lfloor L^{2/d} \rfloor$ and $0 \leq \delta \leq \frac{1}{3K}$.
\end{theorem}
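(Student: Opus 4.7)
The plan is to mirror the proof architecture of Theorem \ref{thm:function_sigma_1}, but exploit the key structural advantage of $\sigma_2$-networks: Lemma \ref{lem:NNex2} says they can realize both multiplication and arbitrary monomials \emph{exactly}, not just approximately. This collapses two of the three error sources ($\mathcal{E}_1$ and $\mathcal{E}_2$) in the ReLU proof to zero, leaving only the point-matching error $\mathcal{E}_3$ and the Taylor remainder. The new ingredient compared to the $\sigma_1$ case is the need to control all (weak) derivatives up to order $n$, which requires Taylor-expanding $\partial^{\bm{\gamma}}f$ for every multi-index $|\bm{\gamma}|\le n$.

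First I would set up the same machinery as in Theorem \ref{thm:function_sigma_1}: set $K=\lfloor N^{1/d}\rfloor^2\lfloor L^{2/d}\rfloor$, partition $[0,1]^d\setminus\Omega([0,1]^d,K,\delta)$ into cubes $Q_{\bm{\beta}}$, and build the piecewise-constant vector-valued map $\bm{\psi}(\bm{x})=[\psi(x_1),\dots,\psi(x_d)]^T/K$ from Proposition \ref{prop:step_sigma_1}, so that $\bm{\psi}(\bm{x})=\bm{\beta}/K$ on $Q_{\bm{\beta}}$. For the Taylor coefficients, use Proposition \ref{prop:point_sigma_1} to produce, for each $|\bm{\alpha}|\le s-1$, a $\sigma_1$-NN (hence $\sigma_2$-NN) $\phi_{\bm{\alpha}}$ with $|\phi_{\bm{\alpha}}(\bm{\beta}/K)-\partial^{\bm{\alpha}}f(\bm{\beta}/K)|\le 2N^{-2s}L^{-2s}=:\mathcal{E}_3$.

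Second, since $(L-2-\log_2 N)N\ge s$, Lemma \ref{lem:NNex2}(v) yields $\sigma_2$-NNs $P_{\bm{\alpha}}(\bm{h})=\bm{h}^{\bm{\alpha}}$ realized \emph{exactly}, and Lemma \ref{lem:NNex2}(iv) gives exact multiplication. Set
\[
\phi(\bm{x}):=\sum_{|\bm{\alpha}|\le s-1}\frac{\phi_{\bm{\alpha}}(\bm{\psi}(\bm{x}))}{\bm{\alpha}!}P_{\bm{\alpha}}(\bm{x}-\bm{\psi}(\bm{x})).
\]
On each $Q_{\bm{\beta}}$, $\bm{\psi}$ is constant, so $\phi(\bm{x})-\sum_{|\bm{\alpha}|\le s-1}\frac{\partial^{\bm{\alpha}}f(\bm{\psi}(\bm{x}))}{\bm{\alpha}!}\bm{h}^{\bm{\alpha}}$ is a polynomial in $\bm{h}=\bm{x}-\bm{\psi}(\bm{x})$ with coefficients of magnitude at most $\mathcal{E}_3/\bm{\alpha}!$.

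Third, for the $\mathcal{W}^{n,\infty}$ estimate on $Q_{\bm{\beta}}$, for each $|\bm{\gamma}|\le n$ I apply $\partial^{\bm{\gamma}}$ to $\phi$ and to $f$, using Taylor's theorem for $\partial^{\bm{\gamma}}f$:
\[
\partial^{\bm{\gamma}}f(\bm{x})=\sum_{|\bm{\alpha}|\le s-1-|\bm{\gamma}|}\frac{\partial^{\bm{\alpha}+\bm{\gamma}}f(\bm{\psi}(\bm{x}))}{\bm{\alpha}!}\bm{h}^{\bm{\alpha}}+R_{\bm{\gamma}}(\bm{x}),
\]
with $\|R_{\bm{\gamma}}\|_{L^{\infty}(Q_{\bm{\beta}})}\lesssim (s+1)^{d-1}K^{-(s-|\bm{\gamma}|)}\le (s+1)^{d-1}K^{-(s-n)}$ by $\|\partial^{\bm{\beta}}f\|_\infty<1$ and $|\bm{h}|\le K^{-1}$. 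Reindexing the differentiated $\phi$-sum by $\bm{\alpha}\mapsto\bm{\alpha}+\bm{\gamma}$ (only $\bm{\alpha}\ge\bm{\gamma}$ survives), the differences $\phi_{\bm{\alpha}}(\bm{\beta}/K)-\partial^{\bm{\alpha}}f(\bm{\beta}/K)$ appear with a harmless polynomial factor in $\bm{h}$ bounded by $1$, contributing at most $s^d\,\mathcal{E}_3$. Combining with the Taylor remainder and using $K\ge N^{2/d}L^{2/d}/8$ gives the stated bound $2(s+1)^d 8^{s-n}N^{-2(s-n)/d}L^{-2(s-n)/d}$ outside the trifling region; the $\mathcal{W}^{n,\infty}$ norm of $\phi$ itself is bounded by $\sum_{|\bm{\alpha}|\le s-1}\|\phi_{\bm{\alpha}}\|_\infty/\bm{\alpha}!\le s^d$ using $0\le\phi_{\bm{\alpha}}\le 1$ from Proposition \ref{prop:point_sigma_1}.

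The main obstacle is the combinatorial bookkeeping: matching the differentiated NN $\partial^{\bm{\gamma}}\phi$ to the truncated Taylor series of $\partial^{\bm{\gamma}}f$ and tracking which monomial terms pair up, for every $|\bm{\gamma}|\le n$ simultaneously. Once this reindexing is done once, the bound is uniform in $\bm{\gamma}$ (since the rate is controlled by the worst case $|\bm{\gamma}|=n$). Finally, the width and depth of $\phi$ follow by summing contributions: $\bm{\psi}$ of width $O(dN)$ and depth $O(L)$, $\phi_{\bm{\alpha}}$ of width $O(sN\log N)$ and depth $O(L\log L)$, $P_{\bm{\alpha}}$ of width $O(N)$ and depth $O(L)$ via Lemma \ref{lem:NNex2}(v), and an outer $\sigma_2$-summation of at most $s^d$ products; arranging these in parallel/series yields width $16s^{d+1}d(N+2)\log_2(8N)$ and depth $10(L+2)\log_2(4L)$, completing the proof.
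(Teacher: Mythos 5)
Your proposal is correct and follows essentially the same route as the paper's proof: the same partition into cubes $Q_{\bm{\beta}}$ with the step network $\bm{\psi}$, the same point-matching networks $\phi_{\bm{\alpha}}$ for the Taylor coefficients with error $\mathcal{E}_3=2N^{-2s}L^{-2s}$, the same exploitation of exact $\sigma_2$-realization of multiplication and monomials (Lemma \ref{lem:NNex2}) to kill the $\mathcal{E}_1,\mathcal{E}_2$ terms, and the same Taylor expansion of $\partial^{\bm{\gamma}}f$ for each $|\bm{\gamma}|\le n$ to control the semi-norms. The only cosmetic difference is that you write the product directly while the paper routes it through the exact multiplication network $\widetilde{\phi}$, which is the same function.
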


\begin{proof}

Similar to the proof of Theorem \ref{thm:function_sigma_1}, we set $K = \lfloor {N^{1/d}} \rfloor^2 \lfloor L^{2/d} \rfloor$ and let {$\Omega{([0,1]^d,K,\delta)}$} partition $[0,1]^d$ into $K^d$ cubes $Q_{\bm{\beta}}$ for $\bm{\beta} \in \{0,1,\cdots, K-1\}^d$ such that {\[[0,1]^d = \Omega{([0,1]^d,K,\delta)} \bigcup \big( \cup_{\bm{\beta} \in \{0,1,\cdots,K-1\}^d}   Q_{\bm{\beta}} \big). \]} For each $\bm{\beta}=[\beta_1,\beta_2,\cdots,\beta_d]^T \in \{0,1,\cdots,K-1\}^d$, we {define}
\[
Q_{\bm{\beta}} = \Bigg\{ \bm{x}=[x_1,x_2,\cdots,x_d]^T : x_i \in \Big[ \frac{\beta_i}{K}, \frac{\beta_i+1}{K} -\delta \cdot 1_{\{ \beta_i \leq K-2 \}} \Big]~\text{for}~i=1,2,\dots, d \Bigg\}.
\]
By Proposition \ref{prop:step_sigma_1}, there exists a $\sigma_2$-$\tn{NN}$ $\psi$ with width $4N+3$ and depth $4L+5$ such that
\[
\psi(x) = k,\quad \text{if} ~ x\in \big[\frac{k}{K},\frac{k+1}{K}- \delta \cdot 1_{\{ k \leq K-2 \}}\big]~\text{for}~k=0,1,\dots,K-1.
\]
Then, for each $\bm{\beta} \in \{0,1,\cdots,K-1\}^d, \psi(x_i)=\theta_i$ if $\bm{x} \in Q_{\bm{\beta}}~\text{for}~i=1,2,\cdots,d$.

Define
\[
\bm{\psi{(x)}}:= [\psi(x_1),\psi(x_2),\cdots, \psi(x_d)]^T/K \quad \text{for any}~\bm{x} \in [0,1]^d,
\]
then
\[
\bm{\psi{(x)}} = \bm{\beta}/K\quad \text{if}~\bm{x} \in Q_{\bm{\beta}}\quad \text{for}~\bm{\beta} \in \{0,1,\cdots,K-1\}^d.
\]

 Now, we fix a $\bm{\beta}\in \{ 0,1,\cdots,K-1 \}^d$ throughout the proof. For any $\bm{x} \in Q_{\bm{\beta}}$, by Taylor's expansion there exists a $\xi_{\bm{x}} \in (0,1)$ such that

\[
{ f(\bm{x})} = \sum_{|\xal|\leq s-1} \frac{\partial^{\bm{\alpha}}f(\bm{\psi(\bm{x})})}{\bm{\alpha}!} \bm{h}^{\bm{\alpha}} + \sum_{|\xal|= s} \frac{\partial^{\bm{\alpha}}f(\bm{\psi(\bm{x})} + \xi_{\bm{x}}\bm{h}  )}{\bm{\alpha}!} \bm{h}^{\bm{\alpha}},
\]
where $\bm{h} = \bm{x} - \bm{\psi{(\bm{x})}}$.

 By Lemma \ref{lem:NNex2} (iv), there exists a $\sigma_2$-$\tn{NN}$ $\widetilde{\phi}$ with width $4$ and depth $1$ such that 
\[
\widetilde{\phi}(x,y)=xy
\]
for any $x,y \in (-3,3).$

Note that it is trivial to construct $\sigma_2$-$\tn{NN}s$ $P_{\bm{\alpha}}(\bm{x})$ for $\bm{x}^{\bm{\alpha}}$ when $|\xal| \leq 1$. Thus, for each $\bm{\alpha} \in \mathbb{N}^d$ with $|\xal| \leq s-1$ and for any $N,L \in \N^+$ satisfying $(L-2-\log_2 N)N\geq s$, by Lemma \ref{lem:NNex2} (vi) with $a=b=J=1$, there exists a $\sigma_2$-$\tn{NN}$ $P_{\bm{\alpha}}$ with width $4N+2d+2$ and depth $L$ such that
\begin{equation*}
P_{\bm{\alpha}}(\bm{x})=\bm{x}^{\bm{\alpha}}
\end{equation*}
for any $\bm{x} \in \mathbb{R}^d$.

For each $i=0,1,\cdots,K^d-1$, we define
\[
\bm{\eta}(i) = [\eta_1,\eta_2,\cdots,\eta_d]^T \in \{0,1,\dots,K-1 \}^d
\]
such that $\sum_{j=1}^{d}\eta_j K^{j-1}=i$. We will drop the input $i$ in $\bm{\eta}(i)$ later for simplicity. For each $\bm{\alpha} \in \mathbb{N}^d$ with $|\xal| \leq s-1$, define
\[
\xi_{\bm{\alpha},i} = \big(\partial^{\bm{\alpha}}f\big(\frac{\eta}{K}\big) +1 \big)/2.
\]
Note that $K^d = (\lfloor N^{1/d} \rfloor^2 \lfloor L^{2/d} \rfloor)^d \leq N^2L^2$ and $\xi_{\bm{\alpha},i} \in [0,1]$ for $i=0,1,\cdots,K^d-1$. By Proposition \ref{prop:point_sigma_1}, there exists a $\sigma_1$-$\tn{NN}$ $\widetilde{\phi}_{\bm{\alpha}}$, which is also a $\sigma_2$-$\tn{NN}$, of width $16s(N+1)\log_{2}(8N)$ and depth $5(L+2)\log_{2}(4L)$ such that 
\[
| \widetilde{\phi}_{\bm{\alpha}}(i) -  \xi_{\bm{\alpha},i}| \leq N^{-2s}L^{-2s}, \quad \text{for}~i=0,1,\cdots, K^d - 1 ~\text{and}~ |\xal| \leq s-1.
\]
Define
\[
\phi_{\bm{\alpha}}(\bm{x}):=2\widetilde{\phi}_{\bm{\alpha}} \Bigg( \sum_{j=1}^{d} x_j K^{j-1}  \Bigg) -1,\quad \text{for any}~\bm{x}=[x_1,x_2,\cdots,x_d]^d \in \mathbb{R}^d.
\]
For each $|\xal| \leq s-1$, we know that $\phi_{\bm{\alpha}}$ is also of width $16s(N+1)\log_{2}(8N)$ and depth $5(L+2)\log_{2}(4L)$.

Then for each $\bm{\eta} = [\eta_1,\eta_2,\cdots,\eta_d]^T \in \{0,1,\dots,K-1 \}^d$ corresponding to $i=\sum_{j=1}^{d}\eta_j K^{j-1}$, each $\bm{\alpha} \in \mathbb{N}^d$ with $|\xal| \leq s-1$, we have
\[
\big| \phi_{\bm{\alpha}}(\frac{\bm{\eta}}{K})  - \partial^{\bm{\alpha}}f (\frac{\bm{\eta}}{K}) \big| = \Bigg| 2\widetilde{\phi}_{\bm{\alpha}} \bigg( \sum_{j=1}^{d} x_j K^{j-1}  \bigg) -1  - (2\xi_{\bm{\alpha},i} -1) \Bigg| = 2|\widetilde{\phi}_{\bm{\alpha}}(i) - \xi_{\bm{\alpha},i}  | \leq 2N^{-2s}L^{-2s}.
\]

From $\bm{\psi{(x)}}=\frac{\bm{\beta}}{K}$ for $\bm{x} \in Q_{\bm{\beta}}$, it follows that
\begin{eqnarray*}\nonumber
\| \phi_{\bm{\alpha} }(\bm{\psi (\bm{x}) }) - \partial^{\bm{\alpha}}f(\bm{\psi (\bm{x}) }) \|_{\mathcal{W}^{n,\infty}(Q_{\bm{\beta}})}&=& \| \phi_{\bm{\alpha} }(\bm{\psi (\bm{x}) }) - \partial^{\bm{\alpha}}f(\bm{\psi (\bm{x}) }) \|_{{L}^{\infty}(Q_{\bm{\beta}})}\\\nonumber
&=&\big| \phi_{\bm{\alpha}}(\frac{\bm{\beta}}{K})  - \partial^{\bm{\alpha}}f (\frac{\bm{\beta}}{K}) \big| \\ 
&\leq& 2N^{-2s}L^{-2s} =:\mathcal{E}_3.
\end{eqnarray*}

Note that since $ \phi_{\bm{\alpha} }(\bm{\psi (\bm{x}) }) - \partial^{\bm{\alpha}}f(\bm{\psi (\bm{x}) })$ {for $\bm{x} \in Q_{\bm{\beta}}$ is constant, its weak derivative is zero, which has given the above inequality.}

Define
\begin{equation}\label{eqn:main_nn_sigma2}
\phi(\bm{x}) = \sum_{|\xal| \leq s-1} \widetilde{\phi} \Big( \frac{\phi_{\bm{\alpha} }(\bm{\psi (\bm{x}) })}{\bm{\alpha}!}, P_{\bm{\alpha}} (\bm{h})  \Big)
\end{equation}
for any $\bm{x} \in \mathbb{R}^d$.

{Let us now first estimate the overall approximation error in semi-norm. We denote the $n$-th order derivatives of $f$ by $\partial^{\bm{\gamma}}f$ with $|\bm{\gamma}|=n$. For any $\bm{x} \in Q_{\bm{\beta}}$, by Taylor's expansion there exists a $\xi_{{\bm{x}}}^{\bm{\gamma}} \in (0,1)$ such that
\begin{eqnarray*}\label{eqn:Taylor_error_jth_order}
 \partial^{\bm{\gamma}}f &=& \sum_{|\xal|\leq s-n-1} \frac{\partial^{\bm{\alpha}}\partial^{\bm{\gamma}}f(\bm{\psi(\bm{x})})}{\bm{\alpha}!} \bm{h}^{\bm{\alpha}} + \sum_{|\xal|= s-n} \frac{\partial^{\bm{\alpha}}\partial^{\bm{\gamma}}f(\bm{\psi(\bm{x})} + \xi_{{\bm{x}}}^{\bm{\gamma}} \bm{h}  )}{\bm{\alpha}!} \bm{h}^{\bm{\alpha}}\\\nonumber
 &=& \partial^{\bm{\gamma}} \Big( \sum_{|\xal|\leq s-1} \frac{\partial^{\bm{\alpha}}f(\bm{\psi(\bm{x})})}{\bm{\alpha}!} \bm{h}^{\bm{\alpha}} \Big)+ \sum_{|\xal|= s-n} \frac{\partial^{\bm{\alpha}}\partial^{\bm{\gamma}}f(\bm{\psi(\bm{x})} + \xi_{\bm{x}}^{\bm{\gamma}} \bm{h}  )}{\bm{\alpha}!} \bm{h}^{\bm{\alpha}},
\end{eqnarray*}
where $\bm{h} = \bm{x} - \bm{\psi{(\bm{x})}}$.
\\
Therefore, for any $\bm{x} \in Q_{\bm{\beta}}$ we have
\begin{eqnarray*}\nonumber
&& | \phi(\bm{x}) - f(\bm{x}) |_{\mathcal{W}^{n,\infty}(Q_{\bm{\theta}})}\\
&\leq &\Big| \phi(\bm{x})- \sum_{|\xal|\leq s-1} \frac{\partial^{\bm{\alpha}}f(\bm{\psi(\bm{x})})}{\bm{\alpha}!} \bm{h}^{\bm{\alpha}}\Big|_{\mathcal{W}^{n,\infty}(Q_{\bm{\theta}})}  +  \Big| \sum_{|\xal|\leq s-1} \frac{\partial^{\bm{\alpha}}f(\bm{\psi(\bm{x})})}{\bm{\alpha}!} \bm{h}^{\bm{\alpha}}  - f(\bm{x}) \Big|_{\mathcal{W}^{n,\infty}(Q_{\bm{\theta}})}\\
&\leq &  {\sum_{|\xal| \leq s-1}  \Big| \widetilde{\phi} \Big( \frac{\phi_{\bm{\alpha} }(\bm{\psi (\bm{x}) })}{\bm{\alpha}!}, P_{\bm{\alpha}} (\bm{h})  \Big)  -  \frac{\partial^{\bm{\alpha}}f(\bm{\psi(\bm{x})})}{\bm{\alpha}!} \bm{h}^{\bm{\alpha}} \Big|_{\mathcal{W}^{n,\infty}(Q_{\bm{\beta}})}}\\
 &&+ \max_{|\bm{\gamma}|=n} \sum_{|\xal|= s-n} \Big\|   \frac{\partial^{\bm{\alpha}}\partial^{\bm{\gamma}}f(\bm{\psi(\bm{x})} + \xi_{\bm{x}}^{\bm{\gamma}}\bm{h}  )}{\bm{\alpha}!} \bm{h}^{\bm{\alpha}} \Big\|_{\mathcal{L}^{\infty}(Q_{\bm{\beta}})}\\
&\leq &  \underbrace{\sum_{|\xal| \leq s-1}  \Big\| \widetilde{\phi} \Big( \frac{\phi_{\bm{\alpha} }(\bm{\psi (\bm{x}) })}{\bm{\alpha}!}, P_{\bm{\alpha}} (\bm{h})  \Big)  -  \frac{\partial^{\bm{\alpha}}f(\bm{\psi(\bm{x})})}{\bm{\alpha}!} \bm{h}^{\bm{\alpha}} \Big\|_{\mathcal{W}^{n,\infty}(Q_{\bm{\beta}})}}_{=:E_1} \\\nonumber
&&+\underbrace{ \max_{|\bm{\gamma}|=n} \sum_{|\xal|= s-n} \Big\|   \frac{\partial^{\bm{\alpha}}\partial^{\bm{\gamma}}f(\bm{\psi(\bm{x})} + {\xi}^{(\bm{\gamma})}_{\bm{x}}\bm{h}  )}{\bm{\alpha}!} \bm{h}^{\bm{\alpha}} \Big\|_{\mathcal{L}^{\infty}(Q_{\bm{\beta}})}}_{=:E_2},
\end{eqnarray*}
where $E_2$ with ${\xi}^{(\bm{\gamma})}_{\bm{x}} \in (0,1)$ is the remainder resulted from Taylor's expansion of $\partial^{\bm{\gamma}}f$.
}

\begin{eqnarray}\nonumber
E_1 &=& \sum_{|\xal| \leq s-1}  \Big\| \widetilde{\phi} \Big( \frac{\phi_{\bm{\alpha} }(\bm{\psi (\bm{x}) })}{\bm{\alpha}!}, P_{\bm{\alpha}} (\bm{h})  \Big)  -  \frac{\partial^{\bm{\alpha}}f(\bm{\psi(\bm{x})})}{\bm{\alpha}!} \bm{h}^{\bm{\alpha}} \Big\|_{\mathcal{W}^{n,\infty}(Q_{\bm{\beta}})}\\\nonumber
&\leq& \underbrace{\sum_{|\xal| \leq s-1}  \Big\| \widetilde{\phi} \Big( \frac{\phi_{\bm{\alpha} }(\bm{\psi (\bm{x}) })}{\bm{\alpha}!}, P_{\bm{\alpha}} (\bm{h})  \Big)  -  \widetilde{\phi} \Big( \frac{\partial^{\bm{\alpha}}f(\bm{\psi(\bm{x})})  }{\bm{\alpha}!}, P_{\bm{\alpha}} (\bm{h})  \Big) \Big\|_{\mathcal{W}^{n,\infty}(Q_{\bm{\beta}})}}_{=:E_{1,1}} \\\nonumber
&&+ \underbrace{\sum_{|\xal| \leq s-1}  \Big\| \widetilde{\phi} \Big( \frac{\partial^{\bm{\alpha}}f(\bm{\psi(\bm{x})})  }{\bm{\alpha}!}, P_{\bm{\alpha}} (\bm{h})  \Big)  -  \frac{\partial^{\bm{\alpha}}f(\bm{\psi(\bm{x})})}{\bm{\alpha}!} \bm{h}^{\bm{\alpha}} \Big\|_{\mathcal{W}^{n,\infty}(Q_{\bm{\beta}})}}_{=:E_{1,2}}.
\end{eqnarray}

Hence, we can now measure $E_{1,1}$, $E_{1,2}$, and $E_2$:

\begin{eqnarray}\nonumber
E_{1,1} &=& \sum_{|\xal| \leq s-1}  \Big\| \widetilde{\phi} \Big( \frac{\phi_{\bm{\alpha} }(\bm{\psi (\bm{x}) })}{\bm{\alpha}!}, P_{\bm{\alpha}} (\bm{h})  \Big)  -  \widetilde{\phi} \Big( \frac{\partial^{\bm{\alpha}}f(\bm{\psi(\bm{x})})  }{\bm{\alpha}!}, P_{\bm{\alpha}} (\bm{h})  \Big) \Big\|_{\mathcal{W}^{n,\infty}(Q_{\bm{\beta}})}\\\nonumber
&\leq& \sum_{|\xal| \leq s-1}  \Bigg(   \underbrace{\Big\| \widetilde{\phi} \Big( \frac{\phi_{\bm{\alpha} }(\bm{\psi (\bm{x}) })}{\bm{\alpha}!}, P_{\bm{\alpha}} (\bm{h})  \Big)  -    \frac{\phi_{\bm{\alpha} }(\bm{\psi (\bm{x}) })}{\bm{\alpha}!} P_{\bm{\alpha}} (\bm{h})   \Big\|_{\mathcal{W}^{n,\infty}(Q_{\bm{\beta}})}}_{=0}  \\\nonumber
&&+ \underbrace{ \Big\| \widetilde{\phi} \Big( \frac{\partial^{\bm{\alpha}}f(\bm{\psi(\bm{x})})  }{\bm{\alpha}!}, P_{\bm{\alpha}} (\bm{h})  \Big)  -   \frac{\partial^{\bm{\alpha}}f(\bm{\psi(\bm{x})})  }{\bm{\alpha}!} P_{\bm{\alpha}} (\bm{h})   \Big\|_{\mathcal{W}^{n,\infty}(Q_{\bm{\beta}})}}_{=0}  \\\nonumber
&&+  \underbrace{\Big\|  \frac{\phi_{\bm{\alpha} }(\bm{\psi (\bm{x}) })}{\bm{\alpha}!} P_{\bm{\alpha}} (\bm{h})    -   \frac{\partial^{\bm{\alpha}}f(\bm{\psi(\bm{x})})  }{\bm{\alpha}!} P_{\bm{\alpha}} (\bm{h})   \Big\|_{\mathcal{W}^{n,\infty}(Q_{\bm{\beta}})}}_{\leq \mathcal{E}_3}  \Bigg)\\\nonumber
&\leq& \sum_{|\xal| \leq s-1} \mathcal{E}_3\\\nonumber
&\leq& s^d\mathcal{E}_3.
\end{eqnarray}

Note that the last inequality is followed by the fact that $\sum_{|\xal|\leq s-1} 1 = \sum_{i=1}^{s-1}(i+1)^{d-1} \leq s^d$. Similarly,
 \begin{eqnarray}\nonumber
E_{1,2} &=& \sum_{|\xal| \leq s-1}  \Big\| \widetilde{\phi} \Big( \frac{\partial^{\bm{\alpha}}f(\bm{\psi(\bm{x})})  }{\bm{\alpha}!}, P_{\bm{\alpha}} (\bm{h})  \Big)  -  \frac{\partial^{\bm{\alpha}}f(\bm{\psi(\bm{x})})}{\bm{\alpha}!} \bm{h}^{\bm{\alpha}} \Big\|_{\mathcal{W}^{n,\infty}(Q_{\bm{\beta}})}\\\nonumber
&\leq& \sum_{|\xal| \leq s-1}  \underbrace{\Big\| \widetilde{\phi} \Big( \frac{\partial^{\bm{\alpha}}f(\bm{\psi(\bm{x})})  }{\bm{\alpha}!}, P_{\bm{\alpha}} (\bm{h})  \Big)  -  \frac{\partial^{\bm{\alpha}}f(\bm{\psi(\bm{x})})}{\bm{\alpha}!} P_{\bm{\alpha}} (\bm{h}) \Big\|_{\mathcal{W}^{n,\infty}(Q_{\bm{\beta}})}}_{=0} \\\nonumber
&&+ \sum_{|\xal| \leq s-1}  {\underbrace{\Big\| \frac{\partial^{\bm{\alpha}}f(\bm{\psi(\bm{x})})}{\bm{\alpha}!} P_{\bm{\alpha}} (\bm{h}) -  \frac{\partial^{\bm{\alpha}}f(\bm{\psi(\bm{x})})}{\bm{\alpha}!} \bm{h}^{\bm{\alpha}} \Big\|_{\mathcal{W}^{n,\infty}(Q_{\bm{\beta}})}}_{_{=0}}}\\\nonumber
&\leq& 0.
\end{eqnarray}

{
We now provide a measure on $E_2$. First, by the assumption that $\| \partial^{\bm{\alpha}} f  \|_{L^{\infty}([0,1]^d)} < 1$ for any $\bm{\alpha} \in \mathbb{N}^d$ with $|\xal| \leq s$, we have
 \begin{eqnarray*}
E_2 &=&\max_{|\bm{\gamma}|=n}  \sum_{|\xal|= s-n} \Big\|   \frac{\partial^{\bm{\alpha}}\partial^{\bm{\gamma}}f(\bm{\psi(\bm{x})} + {\xi}^{(\bm{\gamma})}_{\bm{x}}   \bm{h}  )}{\bm{\alpha}!} \bm{h}^{\bm{\alpha}} \Big\|_{\mathcal{L}^{\infty}(Q_{\bm{\beta}})}\\
 &\leq &  \sum_{|\xal|= s-n} \Big\| \frac{1}{\bm{\alpha}!}  \bm{h}^{\bm{\alpha}} \Big\|_{{L}^{\infty}(Q_{\bm{\beta}})}\\
 &\leq & (s-n+1)^{d-1}K^{-(s-n)},
 \end{eqnarray*}
where the last inequality is followed by $\sum_{|\xal|=s-n} 1 \leq (s-n+1)^{d-1}$.
\\
Thus, we have shown that
 \begin{eqnarray*}
&&| \phi(\bm{x}) - f(\bm{x}) |_{\mathcal{W}^{n,\infty}(Q_{\bm{\beta}})} \\\nonumber
&\leq& E_{1,1} + E_{1,2} + E_2 \\\nonumber
&\leq& E_{1,1} + E_{1,2}   + (s-n+1)^{d-1} K^{-(s-n)}.
\end{eqnarray*}
We now estimate $| \phi(\bm{x}) - f(\bm{x}) |_{\mathcal{W}^{j,\infty}(Q_{\bm{\beta}})}, j=0,\dots,n-1$. Using similar arguments and considering Taylor's expansion of ${\partial^{\bm{\gamma}}}f$, $|\bm{\gamma}|\leq n-1$, with ${\xi}^{(\bm{\gamma})}_{\bm{x}} \in (0,1)$ in the remainder term, we have 
 \begin{eqnarray*}
&&| \phi(\bm{x}) - f(\bm{x}) |_{\mathcal{W}^{j,\infty}(Q_{\bm{\beta}})} \\
&\leq&\Big| \phi(\bm{x})- \sum_{|\xal|\leq s-1} \frac{\partial^{\bm{\alpha}}f(\bm{\psi(\bm{x})})}{\bm{\alpha}!} \bm{h}^{\bm{\alpha}} \Big|_{\mathcal{W}^{j,\infty}(Q_{\bm{\theta}})} +  \Big|\sum_{|\xal|\leq s-1} \frac{\partial^{\bm{\alpha}}f(\bm{\psi(\bm{x})})}{\bm{\alpha}!} \bm{h}^{\bm{\alpha}}  - f(\bm{x}) \Big|_{\mathcal{W}^{j,\infty}(Q_{\bm{\theta}})}\\
&\leq &  {\sum_{|\xal| \leq s-1}  \Big| \widetilde{\phi} \Big( \frac{\phi_{\bm{\alpha} }(\bm{\psi (\bm{x}) })}{\bm{\alpha}!}, P_{\bm{\alpha}} (\bm{h})  \Big)  -  \frac{\partial^{\bm{\alpha}}f(\bm{\psi(\bm{x})})}{\bm{\alpha}!} \bm{h}^{\bm{\alpha}} \Big|_{\mathcal{W}^{j,\infty}(Q_{\bm{\beta}})}}\\
 &&+ \max_{|\bm{\gamma}|=j} \sum_{|\xal|= s-j} \Big\|   \frac{\partial^{\bm{\alpha}}\partial^{\bm{\gamma}}f(\bm{\psi(\bm{x})} + {\xi}^{(\bm{\gamma})}_{\bm{x}}\bm{h}  )}{\bm{\alpha}!} \bm{h}^{\bm{\alpha}} \Big\|_{\mathcal{L}^{\infty}(Q_{\bm{\beta}})}\\
&\leq& {\sum_{|\xal| \leq s-1}  \Big\| \widetilde{\phi} \Big( \frac{\phi_{\bm{\alpha} }(\bm{\psi (\bm{x}) })}{\bm{\alpha}!}, P_{\bm{\alpha}} (\bm{h})  \Big)  -  \frac{\partial^{\bm{\alpha}}f(\bm{\psi(\bm{x})})}{\bm{\alpha}!} \bm{h}^{\bm{\alpha}} \Big\|_{\mathcal{W}^{n,\infty}(Q_{\bm{\beta}})}}\\
&& + (s-j+1)^{d-1} K^{-(s-j)}\\
&\leq& E_{1,1} + E_{1,2}   + (s-j+1)^{d-1} K^{-(s-j)},
\end{eqnarray*}
where the second last inequality is due to $\sum_{|\xal|=s-j} 1 \leq (s-j+1)^{d-1}$.
}

{
Using $K=\lfloor N^{1/d} \rfloor^2 \lfloor L^{2/d} \rfloor \geq \frac{N^{2/d} L^{2/d}}{8}$, we have
\begin{eqnarray}\nonumber
&& \| \phi(\bm{x}) - f(\bm{x}) \|_{\mathcal{W}^{n,\infty}(Q_{\bm{\beta}})} \\\nonumber
&=& \max{\big\{ \| \phi(\bm{x}) - f(\bm{x}) \|_{{L}^{\infty}(Q_{\bm{\beta}})} , \dots, | \phi(\bm{x}) - f(\bm{x}) |_{\mathcal{W}^{n-1,\infty}(Q_{\bm{\beta}})},  | \phi(\bm{x}) - f(\bm{x}) |_{\mathcal{W}^{n,\infty}(Q_{\bm{\beta}})} \big\}}  \\\nonumber
&\leq& E_{1,1} + E_{1,2} + (s+1)^{d-1} K^{-(s-n)} \\\nonumber
&\leq& s^d\mathcal{E}_3  + (s+1)^{d-1} K^{-(s-n)}\\\nonumber
&\leq& (s+1)^d( K^{-(s-n)} +\mathcal{E}_3 )\\\nonumber
&\leq& (s+1)^d\big(  K^{-(s-n)} + 2N^{-2s}L^{-2s} \big)\\\nonumber
&\leq&(s+1)^d\big(  8^{s-n}N^{-2(s-n)/d}L^{-2(s-n)/d} +2N^{-2s}L^{-2s} \big)\\\nonumber
&\leq&(s+1)^d  (8^{s-n}+2)N^{-2(s-n)/d}L^{-2(s-n)/d}  \\\nonumber
&\leq& 2(s+1)^d 8^{s-n}N^{-2(s-n)/d}L^{-2(s-n)/d}.
\end{eqnarray}
}

Since $\bm{\beta} \in \{0,1,2,\cdots,K-1  \}^d$ is arbitrary and the fact that {$[0,1]^d \backslash \Omega{([0,1]^d,K,\delta)} \subseteq  \cup_{\bm{\beta} \in \{0,1,\cdots,K-1\}^d}  Q_{\bm{\beta}} $}, we have
{\[
\| \phi(\bm{x}) - f(\bm{x}) \|_{\mathcal{W}^{n,\infty}([0,1]^d \backslash \Omega{([0,1]^d,K,\delta)})} \leq  2(s+1)^d 8^{s-n}N^{-2(s-n)/d}L^{-2(s-n)/d}.
\]}

{
Furthermore, we have
\begin{eqnarray*}
\| \phi(\bm{x})\|_{\mathcal{W}^{1,\infty}([0,1]^d )} &=& \Bigg\|   \sum_{|\xal| \leq s-1} \widetilde{\phi} \Big( \frac{\phi_{\bm{\alpha} }(\bm{\psi (\bm{x}) })}{\bm{\alpha}!}, P_{\bm{\alpha}} (\bm{h})  \Big) \Bigg\|_{\mathcal{W}^{1,\infty}([0,1]^d )}\\
&\leq &  \sum_{|\xal| \leq s-1}  \| \widetilde{\phi}\|_{\mathcal{W}^{1,\infty}([0,1]^d)}\\
&\leq& s^d.
\end{eqnarray*}
}

As last, we finish the proof by estimating the width and depth of the network implementing $\phi(\bm{x})$. From (\ref{eqn:main_nn_sigma2}), assuming for any $N,L \in \N^+$ satisfying $(L-2-\log_2 N)N \geq s$, we know that $\phi(\bm{x})$ consists of the following subnetworks:

\begin{enumerate}
\item $\bm{\psi} \in \tn{NN}(\textrm{width} \leq d(4N+3);\textrm{depth} \leq 4L+5)$;
\item $ \phi_{\bm{\alpha} } \in \tn{NN}(\textrm{width} \leq 16s(N+1)\log_{2}(8N);\textrm{depth} \leq 5(L+2)\log_{2}(4L))$;
\item $P_{\bm{\alpha}} \in \tn{NN}(\textrm{width} \leq 4N+2d+2 ;\textrm{depth} \leq L)$;
\item $\widetilde{\phi} \in \tn{NN}(\textrm{width} \leq 4;\textrm{depth} \leq 1)$.
\end{enumerate}
Thus, we can infer that the $\phi$ can be implemented by a ReLU network with width $16s^{d+1}{ d}(N+2)\log_2{(8N)}$ and depth 
\begin{equation*}
	\begin{split}
		(4L+5)+1+L+5(L+2)\log_2(4L) +3\le
		10(L+2)\log_2(4L)
	\end{split}.
\end{equation*} Hence, we have finished the proof.
\qed
\end{proof}

{We can now prove Theorem \ref{corol:function_sigma_2} using Theorem \ref{thm:function_sigma_2}.}

{
\begin{proof}[Proof of Theorem \ref{corol:function_sigma_2}]
When $f$ is a constant function, the statement is trivial.
By Theorem \ref{thm:function_sigma_2}, there exists a $\sigma_2$-$\tn{NN}$ $\phi$ with width $16s^{d+1}d(N+2)\log_2{({8}N)}$ and depth $10(L+2)\log_2(4L)$ such that $\|{\phi}\|_{\mathcal{W}^{n,\infty}([0,1]^d)} \leq s^d$ and
\[
\| f - \phi \|_{\mathcal{W}^{n,p}([0,1]^d)} \leq  2(s+1)^d 8^{s-n}N^{-2(s-n)/d}L^{-2(s-n)/d},
\]
Now, we set $K=\lfloor N^{1/d} \rfloor^2 \lfloor L^{2/d}\rfloor$ and choose a small $\delta$ such that
\[
Kd\delta \leq \big(N^{-2(s-n)/d}L^{-2(s-n)/d}\big)^p.
\]
Then, we have
{
\begin{align*}
\| f - \phi \|^p_{\mathcal{W}^{n,p}([0,1]^d)} &= \| f - \phi \|^p_{\mathcal{W}^{n,p}(\Omega{([0,1]^d,K,\delta)})} +   \| f - \phi \|^p_{\mathcal{W}^{n,p}([0,1]^d \backslash \Omega{([0,1]^d,K,\delta)})}\\
&\leq  Kd\delta(s^d)^p +  \big( 2(s+1)^d 8^{s-n}N^{-2(s-n)/d}L^{-2(s-n)/d} \big)^p\\
&\leq  \big(s^d N^{-2(s-1)/d}L^{-2(s-1)/d}\big)^p +  \big(2(s+1)^d 8^{s-n}N^{-2(s-n)/d}L^{-2(s-n)/d}\big)^p\\
&\leq  (3(s+1)^d 8^{s-n}N^{-2(s-n)/d}L^{-2(s-n)/d})^p.
\end{align*}
}
Hence, we have
\[
\| f - \phi \|_{\mathcal{W}^{1,p}([0,1]^d)} \leq 3(s+1)^d 8^{s-n}N^{-2(s-n)/d}L^{-2(s-n)/d}.
\]
\qed
\end{proof}
}

\section{Conclusions}
We have given a number of theoretical results on explicit error characterization for approximating smooth functions using deep ReLU networks and their smoother variants. Our results measured in Sobolev norms are well-suited for studying solving high-dimensional PDEs. Further generalizing our analysis to {other function spaces including the H\"{o}lder space} and other neutral networks such as the Floor-ReLU networks will be an interesting direction for research work. Numerical investigation of our findings in the setting for solving parametric PDEs will also be left as future work.

\section*{Acknowledgements}
{The authors would like to thank the editor and the two anonymous reviewers for their valuable comments and constructive suggestions, which have greatly improved this paper.} S. Hon was partially supported by the Hong Kong RGC under Grant 22300921, a start-up grant from the Croucher Foundation, and a Tier 2 Start-up Grant from the Hong Kong Baptist University. H.~Yang was partially supported by the US National Science Foundation under award DMS-1945029.




\bibliographystyle{plain}

\end{document}